\documentclass[11pt]{amsart}

\usepackage{latexsym,enumerate}
\usepackage{amssymb, xcolor,mathrsfs}
\usepackage{amsmath,amsthm,amsfonts,amssymb,latexsym, mathabx,lipsum}

\usepackage[colorlinks,pagebackref]{hyperref}
\usepackage[english]{babel}

\usepackage[utf8]{inputenc}

%
%
%



\headheight=7pt \textheight=574pt \textwidth=432pt \topmargin=14pt
\oddsidemargin=18pt \evensidemargin=18pt




\newtheorem{thm}{Theorem}[section]
\newtheorem{lem}[thm]{Lemma}
\newtheorem{cor}[thm]{Corollary}
\newtheorem{prop}[thm]{Proposition}

\newtheorem{mainthm}{Theorem}

\theoremstyle{definition}
\newtheorem{define}[thm]{Definition}

\newtheorem{remark}{Remark}

\def\nor{\trianglelefteq\,}

\newcommand{\N}{\mathbb{N}}

\newcommand{\Irr}{{{\operatorname{Irr}}}}

\newcommand{\RNum}[1]{\uppercase\expandafter{\romannumeral #1\relax}}
\newcommand{\Al}{\textup{\textsf{A}}}
\newcommand{\Sy}{\textup{\textsf{S}}}

\newcommand{\cod}{\normalfont{\mbox{cod}}}
\newcommand{\cd}{\normalfont{\mbox{cd}}}

\newcommand{\PSL}{\normalfont{\mbox{PSL}}}

\newcommand{\PSU}{\normalfont{\mbox{PSU}}}

\newcommand{\irr}{\mathrm{Irr}}

\newcommand{\bZ}{{\mathbf Z}}

\newcommand{\bC}{{\mathbf{C}}}
\newcommand{\bG}{{\mathbf{G}}}

\newcommand{\bO}{{\mathbf{O}}}

\newcommand{\St}{{\mathsf {St}}}

%
%

\title[The codegree isomorphism problem for finite simple groups]
{The codegree isomorphism problem\\ for finite simple groups}

\author[N.\,N. Hung]{Nguyen N. Hung}
\address{Department of Mathematics, The University of Akron, Akron,
OH 44325, USA} \email{hungnguyen@uakron.edu}

\author[A. Moret\'{o}]{Alexander Moret\'{o}}
\address{Departamento de Matem\'aticas, Universidad de Valencia,
46100 Burjassot, Valencia, Spain} \email{alexander.moreto@uv.es}

\thanks{Part of this work was
done while the first author was visiting the Vietnam Institute for
Advanced Study in Mathematics (VIASM), and he thanks the VIASM for
its hospitality and financial support.}

\thanks{The research of the second author is supported by
Ministerio de Ciencia e Innovaci\'on (Grant PID2019-103854GB-I00
funded by MCIN/AEI/ 10.13039/501100011033) and Generalitat Valenciana CIAICO/2021/163.}

\subjclass[2010]{Primary 20C15, 20C30, 20C33, 20D06.}
\keywords{Character codegrees, isomorphism problem, finite simple
groups, Huppert's conjecture}

\begin{document}

\maketitle

\begin{abstract}
We study the codegree isomorphism problem for finite simple groups.
In particular, we show that such a group is determined by the
codegrees (counting multiplicity) of its irreducible characters. The
proof is uniform for all simple groups and only depends on the
classification by means of Artin-Tits' simple order theorem.
\end{abstract}


\section{Introduction}

It has been known for a while that every finite (quasi)simple group
is determined by its complex group algebra, i.e., by the set of
character degrees with multiplicities \cite{BNOT15}. (Recall that,
by the Wedderburn theorem, the multiset of character degrees
counting multiplicities of $G$ determines and is determined by its
complex group algebra $\mathbb{C}{G}\cong \bigoplus_{\chi\in\irr(G)}
\mathrm{Mat}_{\chi(1)}(\mathbb{C})$. Here, $\irr(G)$ is the set of
all irreducible characters of $G$ and $\mathrm{Mat}_n(\mathbb{C})$
is the $\mathbb{C}$-algebra of $n\times n$ matrices.) The version
without multiplicity (and therefore stronger) is a conjecture
proposed by B. Huppert in the late nineties \cite{Huppert}. This
conjecture has been extensively studied over the past two decades,
and progress has been made on a case-by-case basis using the
classification of finite simple groups
\cite{Huppert06,nt,Bessenrodt}. Further discussion on this and other
isomorphism problems of similar nature can be found in the recent
survey \cite{Margolis}.

This paper is concerned with \emph{the codegree isomorphism problem}
for finite simple groups. For a character $\chi$ of a finite group
$G$, the \emph{codegree} of $\chi$ is $\cod(\chi):=
|G:\ker(\chi)|/\chi(1)$. This notion was first introduced and
studied (in a slightly different form) by D. Chillag and M. Herzog
\cite{Chillag1} and D. Chillag, A. Mann, and O. Manz
\cite{Chillag-Mann}. It was later developed into the current form
used nowadays by G. Qian, Y. Wang, and H. Wei \cite{Qian} and has
been proved to have remarkable connections with the structure of
finite groups \cite{Qian,i,dl,Moreto,q21, cn}.

Recently, there has been great interest in the codegree analogue of
Huppert's conjecture (Problem~20.79 of the Kourovka Notebook
\cite{Khukhro}), to which we will refer to as \emph{the codegree
isomorphism conjecture}. From now on $\cod(G)$ denotes the set of
all the codegrees of irreducible characters of $G$.
\begin{equation}
  \tag{CIC}\label{eq:HCC}
  \parbox{\dimexpr\linewidth-12em}{%
    \strut
    \emph{Let $S$ be a finite nonabelian simple group and $G$ a
finite group. Then $\cod(G)=\cod(S)$ if and only if $G\cong S$.}
    \strut
  }
\end{equation}
The approach so far  to this problem is more or less similar to
Huppert's original method, and therefore, unfortunately, is still
case-by-case \cite{BahriAkh,Ahanjideh,gkl, gzy, LY22}.

Let $\cod(G)=\{c_1<c_2<...<c_k\}$ and $m_G(c_i)$ be the number of
irreducible characters of $G$ with codegree $c_i$. The multiset
$$C(G):=\{(c_i,m_G(c_i)):1\leq i\leq k\}$$ is called the
{group pseudo-algebra} of $G$, which can be viewed as the codegree
counterpart of the aforementioned complex group algebra $\mathbb{C}
G$. A natural weaker version of (\ref{eq:HCC}) asks whether $G$ and
$S$ must be isomorphic if $C(G)=C(S)$. We confirm this in our first
main result.

\begin{mainthm}\label{thm:main1}
Let $S$ be a finite simple group and $G$ a finite group. Then
$$C(G)=C(S) \text{ if and only if } G\cong S.$$
\end{mainthm}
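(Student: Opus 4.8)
The plan is to establish the nontrivial implication, that $C(G)=C(S)$ forces $G\cong S$; the converse is clear. I will freely use that $\cod(1_G)=1$ occurs in $C(G)$ with multiplicity one, that every codegree of $G$ divides $|G|$, and that $\pi(\cod(G))=\pi(G)$, so that $C(G)=C(S)$ already yields $\pi(G)=\pi(S)$. If $S$ is cyclic of prime order $p$, then $C(S)=\{(1,1),(p,p-1)\}$, so $G$ has exactly $p$ conjugacy classes and $\cod(G)\subseteq\{1,p\}$; a nonlinear $\chi\in\irr(G)$ would satisfy $\cod(\chi)=|G:\ker\chi|/\chi(1)\ge\chi(1)\ge p$ because $\chi(1)^2\le|G:\ker\chi|$, while $\cod(\chi)=p$ would force $|G:\ker\chi|=p^2$ and hence $\chi$ linear; so $G$ is abelian of order $p$ and $G\cong S$.

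Assume henceforth that $S$ is nonabelian simple, so that $\cod(\chi)=|S|/\chi(1)$ for every nontrivial $\chi\in\irr(S)$. The key structural feature is a \emph{gap} in $\cod(S)$: since $\sum_{\chi\in\irr(S)}\chi(1)^2=|S|$ we have $\chi(1)<\sqrt{|S|}$ for each nontrivial $\chi$, whence every nontrivial codegree of $S$ exceeds $\sqrt{|S|}$; and no prime divisor of $|S|$ is larger than $\sqrt{|S|}$, since $p\mid|S|$ with $p^2>|S|$ would give $|S|=pm$ with $m<p$ and hence a normal Sylow $p$-subgroup. As codegrees of $S$ divide $|S|$, it follows that $\cod(S)$ contains no prime and that $\cod(S)\cap(1,\sqrt{|S|}\,]=\emptyset$. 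The codegrees of the linear characters of $G$ are exactly the element orders of $G/G'$; all of them lie in $\cod(G)=\cod(S)$, so a nontrivial $G/G'$ would contribute a prime codegree, which is impossible. Hence $G$ is perfect.

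The remaining and principal task is to prove that $G$ is simple. Take a minimal normal subgroup $N$ of the perfect group $G$ (so $G/N$ is again perfect). If $N\cong C_p^k$ is elementary abelian, then any $\chi\in\irr(G)$ with $N\not\le\ker\chi$ restricts faithfully to $N$ by minimality, so $N$ embeds in $G/\ker\chi$; one then uses this together with It\^{o}'s theorem $\chi(1)\mid|G:N|$ and the gap to reach a contradiction (already visible in the quasisimple case $N=Z(G)$, where the characters of $G$ faithful on $N$ produce codegrees that cannot be absorbed into $C(S)$ with the correct multiplicities). If $N=T^k$ with $T$ nonabelian simple, one compares $C(G)$ with the rigid multiset $C(S)=\{(1,1)\}\cup\{(|S|/\psi(1),m_S(|S|/\psi(1))):1\neq\psi\in\irr(S)\}$ by accounting simultaneously for the codegrees of $G$ lying over the nontrivial characters of $N$ and for the codegrees contributed by the smaller perfect group $G/N$; the fact that each nontrivial codegree of $S$ equals $|S|$ divided by a genuine character degree of $S$ severely restricts which multisets can occur and should force $N=G$. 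I expect the bookkeeping for $N=T^k$ with $k$ large — where $|\mathrm{Out}(N)|$ is no longer negligible against $|N|$ — to be the main obstacle.

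Granting that $G$ is simple, $G$ is nonabelian (else $|\cod(G)|=2$, whereas a nonabelian simple group has at least three character degrees and hence $|\cod(S)|\ge3$), so $\cod(\chi)=|G|/\chi(1)$ for $1\neq\chi\in\irr(G)$ and, exactly as for $S$, $|G|=1+|G|^2A$ and $|S|=1+|S|^2A$ with the common $A=\sum_{c\in\cod(S),\,c\neq1}m_S(c)/c^{2}$ read off from $C(S)$. Subtracting the two identities shows that $|G|\neq|S|$ would force $1=A(|G|+|S|)$, and since also $A=(|G|-1)/|G|^2$ this gives $|G|^{-1}+|S|^{-1}=1$, impossible as $|S|\ge60$. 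Thus $|G|=|S|$, and Artin--Tits' simple order theorem yields $G\cong S$ unless $\{G,S\}$ is one of the two order-coinciding families $\{\PSL_4(2),\PSL_3(4)\}$ or $\{\PSp_{2n}(q),\mathrm{P}\Omega_{2n+1}(q)\}$ with $n\ge3$ and $q$ odd; but in each of these the two groups have different minimal nontrivial character degrees, hence different largest codegrees, so $C(G)=C(S)$ rules them out. This completes the plan.
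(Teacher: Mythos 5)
The gap is in the step you flag yourself: you want to prove that $G$ is simple, and your sketch for this — analyzing a minimal normal subgroup $N$ of $G$, with a case split into $N$ elementary abelian and $N=T^k$ — is not carried out, and indeed the $T^k$ bookkeeping you worry about is not obviously tractable. This is the crux of the theorem, and the paper sidesteps it entirely rather than solving it head-on. After showing $G$ is perfect (your argument there is fine and equivalent to the paper's Theorem~\ref{thm-char}), the paper chooses a \emph{maximal} normal subgroup $N$, so that $S:=G/N$ is simple, and works with the simple \emph{quotient} $S$ rather than trying to prove $G=S$. The inclusion $\cod(S)\subseteq\cod(G)\subseteq\cod(H)$ together with the order-divisibility result (Theorem~\ref{thm-|S|divides|G|}, proved via the same column-sum computation you use, but for an \emph{inclusion} of codegree sets) gives $|S|\mid|H|$; the multiplicity-aware containment $C(S)\subseteq C(G)\subseteq C(H)$ gives an injection on nontrivial characters, hence $(|S|-1)/|S|^2\le(|H|-1)/|H|^2$, hence $|S|\ge|H|$; together, $|S|=|H|$. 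After the Artin--Tits endgame one knows $G/N\cong H$, and then the multiplicity information is used once more: $C(G/N)=C(H)\supseteq C(G)\supseteq C(G/N)$ forces $k(G/N)=k(G)$ and hence $N=1$. So simplicity of $G$ is a \emph{conclusion}, not an intermediate goal.

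Two smaller points. Your inner-product computation and the deduction $|G|^{-1}+|S|^{-1}=1$ is correct, but note it only applies once $G$ is already known simple; the paper's divisibility lemma is what makes the argument work at the level of the simple quotient, and you do not have an analogue of it. And your dismissal of the Artin--Tits exceptions by comparing smallest nontrivial degrees is correct in spirit but requires the nontrivial input that $\PSp_{2n}(q)$ has a Weil character of degree $(q^n\pm1)/2$ while the minimal degree of $\Omega_{2n+1}(q)$ is strictly larger (Tiep--Zalesskii); the paper records this as Lemma~\ref{lem:O2n+1andPSp}. That part is fillable; the simplicity step is the genuine gap, and your route to it is unlikely to close without essentially reinventing the paper's detour through the simple quotient.
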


The main novelty of this paper is a more uniform approach to these
codegree problems with as little case-by-case analysis as possible.
Our proof of Theorem \ref{thm:main1}, somewhat surprisingly, only
relies on the classification via the so-called \emph{simple order
theorem} (also known as the Artin-Tits theorem
\cite{Artin55,Kimmerle-et-al}), which states that two non-isomorphic
finite simple groups have the same order if and only if they are
either $PSL_4(2)$ and $PSL_3(4)$ or $\Omega_{2n+1}(q)$ and
$PSp_{2n}(q)$ for some $n\geq 3$ and odd $q$. This is perhaps the
first time that a result of this type is proved uniformly for all
simple groups.

There are two key ingredients in the proof of Theorem
\ref{thm:main1}. We find it remarkable that they admit strikingly
elementary proofs. The first provides a characterization of perfect
groups in terms of codegrees, see Theorem \ref{thm-char}. (Recall
that a group is perfect if it coincides with its derived subgroup.)
The second is an order-divisibility property involving character
codegrees of finite simple groups, see Theorem
\ref{thm-|S|divides|G|}.

In general the set of codegrees $\cod(G)$ of a finite group $G$ does
not determine its set of character degrees. However, this is not the
case when $G=S$ is simple. To see why, consider a prime divisor $p$
of $|S|$. Then $S$ has some nontrivial irreducible character of
$p'$-degree (by the squares of the degrees equation). As a result,
$S$ has some codegree divisible by $|S|_p$ -- the $p$-part of $|S|$,
and this is the largest power of $p$ dividing any codegree of $S$.
Therefore both the order $|S|$ and all the character degrees of $S$
are indeed determined by $\cod(S)$. This argument also proves that
if two simple groups have the same set of codegrees, then they have
the same order. Our next main result is a far stronger statement but
the proof requires the classification.

\begin{mainthm}\label{thm:main4}
Let $S$ and $H$ be finite simple groups such that $\cod(S)\subseteq
\cod(H)$. Then $S\cong H$.
\end{mainthm}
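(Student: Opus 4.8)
We may assume $S$ is nonabelian, the case $S\cong\mathbb{Z}_p$ being immediate: then $\cod(S)=\{1,p\}$, so $H$ has an irreducible character of degree $|H|/p$, whence $(|H|/p)^2<|H|$ forces $|H|<p^2$; since a simple group whose order is a multiple of a prime $p$ and is $<p^2$ has a normal (hence, by simplicity, the whole) Sylow $p$-subgroup, $|H|=p$ and $H\cong S$. So assume $S$ nonabelian. The plan is to extract two numerical consequences of $\cod(S)\subseteq\cod(H)$ and to invoke the classification only afterwards. First, every codegree of a finite group divides its order, and (as in the paragraph preceding the statement) for each prime $p$ some codegree of $S$ has $p$-part exactly $|S|_p$; hence $|S|_p\le|H|_p$ for all $p$, i.e. $|S|$ divides $|H|$ (this is Theorem~\ref{thm-|S|divides|G|} applied to $G=H$). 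Set $k:=|H|/|S|$. Second, for nontrivial $\chi\in\irr(S)$ one has $\cod(\chi)=|S|/\chi(1)$, while every codegree of the \emph{simple} group $H$ equals $|H|/\psi(1)$ for a unique $\psi\in\irr(H)$; so $|S|/\chi(1)\in\cod(H)$ yields $k\,\chi(1)\in\cd(H)$. Thus \[ k\cdot\big(\cd(S)\setminus\{1\}\big)\subseteq\cd(H), \] so $\cd(S)$ and $\cd(H)$ are recovered from $\cod(S)$ and $\cod(H)$, $d_{\min}(H)\le k\,d_{\min}(S)$, $k\,d_{\max}(S)\le d_{\max}(H)$, and, summing squares over the distinct degrees $k\,\chi(1)$ against $\sum_{\psi\in\irr(H)}\psi(1)^2=|H|=k|S|$, also $k\sum_{1\ne d\in\cd(S)}d^2\le|S|$, where $d_{\min}$ and $d_{\max}$ denote the smallest nontrivial and the largest irreducible character degree.

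It now suffices to prove $k=1$. Granting this, $|S|=|H|$, and by the Artin--Tits simple order theorem quoted in the introduction, either $S\cong H$ or $\{S,H\}$ equals $\{\PSL_4(2),\PSL_3(4)\}$ or $\{\Omega_{2n+1}(q),\PSp_{2n}(q)\}$ with $n\ge3$ and $q$ odd. In either exceptional case $k=1$ forces $\cd(S)\subseteq\cd(H)$, and this fails: $\PSL_4(2)\cong A_8$ has an irreducible character of degree $7$ that $\PSL_3(4)$ lacks, and $\PSL_3(4)$ has one of degree $63$ that $A_8$ lacks; while $\PSp_{2n}(q)$ with $q$ odd has an irreducible character of degree $(q^n-1)/2$, far below the minimal nontrivial degree of $\Omega_{2n+1}(q)=B_n(q)$ (of order roughly $q^{2n-2}$ by the Landazuri--Seitz bounds), and conversely $\Omega_{2n+1}(q)$ has irreducible degrees (among its semisimple characters) not shared by $\PSp_{2n}(q)$, as one reads from Lusztig's classification. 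So no exceptional case survives, and $S\cong H$.

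The heart of the argument is therefore $k=1$, and this is where the classification enters. The inequality $k\sum_{1\ne d\in\cd(S)}d^2\le|S|$ already bounds $k$ by the number of conjugacy classes of $S$ (since $\sum_{1\ne d}d^2\ge d_{\max}(S)^2\ge|S|/|\irr(S)|$), hence gives an explicit upper bound for $|H|=k|S|$ in terms of $S$; combined with $|S|\mid|H|$, with $d_{\min}(H)\le k\,d_{\min}(S)$, and with the Landazuri--Seitz lower bounds for $d_{\min}(H)$, this leaves only finitely many isomorphism types of $H$ once $S$ is fixed (and, when $S$ is of Lie type, only Lie-type $H$ of size comparable to $S$). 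One then finishes type by type: for $S$ alternating or sporadic, the finitely many surviving $H$ are checked against the known character tables; for $S$ of Lie type, a leading-term (in $q$) comparison of the generic degree polynomials, using that $k$ must divide $|H|$ and be compatible with $k\,d_{\max}(S)\le d_{\max}(H)$, forces $k=1$. Feeding this back into the previous paragraph completes the proof.

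The main obstacle I anticipate is exactly this last, Lie-type-versus-Lie-type step: ruling out the genuine divisibilities $|S|\mid|H|$ among orders of distinct simple groups, and the subtler near-coincidences of orders that the bare divisibility does not exclude, and---most delicately---making the degree comparison uniform across all families, in particular handling $\Omega_{2n+1}(q)$ versus $\PSp_{2n}(q)$ for all $n\ge3$ and all odd $q$ simultaneously, so that $k>1$ is always detected by the \emph{shape} of $\cd(H)$ rather than merely by $|H|$.
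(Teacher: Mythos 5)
Your framework is correct and matches the paper's reduction: $\cod(S)\subseteq\cod(H)$ gives $|S|\mid |H|$ (Theorem~\ref{thm-|S|divides|G|}), then $k\cdot(\cd(S)\setminus\{1\})\subseteq\cd(H)$ with $k=|H|/|S|$ (this is Lemma~\ref{lem-x=H/S}), and once $k=1$ the Artin--Tits theorem plus a degree comparison finishes. The observation $k\sum_{1\neq d\in\cd(S)}d^2\leq |S|$ is a nice extra. However, the step you explicitly flag as ``the heart of the argument'' --- proving $k=1$ --- is not proved; it is a plan, and the plan as stated does not close. ``Finitely many $H$ for each fixed $S$'' plus ``finish type by type'' is not a proof because $S$ ranges over infinitely many isomorphism types (all $\Al_m$, all classical groups for all $(n,q)$, etc.). One needs a \emph{uniform} argument across each family, and a bound like $k\leq|\irr(S)|$ is far too weak to make the leading-term comparison you gesture at rigorous: for alternating groups $|\irr(\Al_m)|\sim p(m)$ grows super-polynomially, and for Lie type a leading-term comparison does not by itself rule out a Lie-type $H$ in a different characteristic from $S$.

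The paper's actual route to $k=1$ in the Lie-versus-Lie case uses a technique absent from your proposal: $p$-defect zero characters. If $p$ is the defining characteristic of $H$, the codegree $|H|_{p'}$ (from the Steinberg character) is the unique nontrivial codegree of $H$ coprime to $p$; since $S$ of Lie type also has a $p$-defect zero character (by Michler, Willems), matching codegrees forces the sharp equality $|S|_{p'}=|H|_{p'}$, not just divisibility (Lemma~\ref{lem-chi-p-power}). Then the classification of prime-power-degree representations (Malle--Zalesskii) shows $S$ and $H$ share a defining characteristic (Proposition~\ref{prop:same-char}), and a comparison of Artin's $\omega$ and $\psi$ invariants shows $|S|_{p'}=|H|_{p'}$ already forces $|S|=|H|$ (Proposition~\ref{prop:|S|_p'=|H|_p'}). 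For the mixed alternating/sporadic-versus-Lie cases the paper controls $f(X)=|X|/b(X)$ via new explicit upper bounds on $b$ for exceptional Lie type (Theorem~\ref{lem:exceptional-b(S)}), $2$- and $3$-core conditions (Lemma~\ref{lem:defect zero Am}), and branching-rule estimates for $\Al_m$. None of these ingredients appears in your sketch, and without something equivalent the argument has a genuine gap.

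A smaller point: in the exceptional pair $\{\Omega_{2n+1}(q),\PSp_{2n}(q)\}$ you verify $\cd(\PSp_{2n}(q))\nsubseteq\cd(\Omega_{2n+1}(q))$ via the small Weyl degree $(q^n\pm 1)/2$, which is fine, but you only \emph{assert} the reverse non-inclusion. That direction also needs a concrete witness (the paper exhibits $q(q^{2n}-1)/(q^2-1)\in\cd(\Omega_{2n+1}(q))\setminus\cd(\PSp_{2n}(q))$ via a Lusztig-series computation in Lemma~\ref{lem:O2n+1andPSp}). Since Theorem~\ref{thm:main4} asserts $\cod(S)\subseteq\cod(H)$ in one direction only, you do need both non-inclusions, one for each possible labelling of $\{S,H\}$.
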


Theorem \ref{thm:main4} in fact is the first step in proving the
codegree isomorphism conjecture (\ref{eq:HCC}). Let $G$ be any
finite group and $H$ a simple group such that $\cod(G)=\cod(H)$, and
$N$ be a maximal normal subgroup of $G$ so that $S:=G/N$ is simple.
In order to prove $G\cong H$, one would first need to establish
$S\cong H$, under the assumption $\cod(S)\subseteq \cod(G)=\cod(H)$.
This is precisely what we do in Theorem~\ref{thm:main4} (see Theorem
\ref{thm:last}).

\begin{remark}
Using Theorem \ref{thm:main4}, we will prove in a subsequent paper
\cite{hmt} that (\ref{eq:HCC}) holds for all sporadic groups,
alternating groups, groups of Lie type of low rank, and, for the
first time in the degree/codegree isomorphism problem, groups of Lie
type of arbitrary rank over a field of prime order. Furthermore,
perhaps unexpectedly, we reduce (\ref{eq:HCC}) to a problem on
$p$-groups.
\end{remark}

\begin{remark}
The proof of Theorem \ref{thm:main4} is fairly complicated and
combines several techniques. In particular, it essentially utilizes
some deep results on the representation theory of finite simple
groups, including the classification of prime-power-degree
representations \cite{Malle-Zalesskii,BBOO01}, lower/upper bounds
for the largest degree of irreducible representations
\cite{VK85,LMT13}, and the existence of $p$-defect zero characters
\cite{Michler,Willems,Granville-Ono}. Along the way we prove an
effective and \emph{explicit} upper bound for the largest character
degree $b(S)$ of an exceptional group $S$ of Lie type (Theorem
\ref{lem:exceptional-b(S)}). (See Lemma~\ref{lem:bounds-for-b(S)}
for previous related work on symmetric and alternating groups
\cite{VK85} and classical groups \cite{LMT13}.)
\end{remark}

\begin{remark}
We need bounds for the largest character degree $b(S)$ in order to
control the behavior of $f(S):=|S|/b(S)$ -- the smallest nontrivial
codegree of $S$. While the relevance of the smallest (or low-degree
in general) characters of (quasi/almost)simple groups is well-known
in group representation theory (see \cite{MM21} for the latest
results), the smallest codegree had not been studied much before.
This invariant arises naturally in the proof of Theorem
\ref{thm:main4} (see Lemma~\ref{lem:fS geq fH}) and measures the
relative growth of $b(S)$ compared to $|S|$. Our proof would be much
simpler if one can show that $f$ is \emph{divisibly increasing}
among nonabelian simple groups, by which we mean that, if $S$ and
$H$ are nonabelian simple of different orders such that $|S|$
divides $|H|$, then $f(S)<f(H)$. We indeed confirm this phenomenon
in many cases, particularly when one of the two groups involved is
alternating (see Propositions \ref{prop:mixed case I},
\ref{prop:mixed case II}, and \ref{prop:alternating}).
\end{remark}

The layout of the paper is as follows. In Section
\ref{sec:prime-power-codegrees}, we prove some results on prime
character codegrees and provide a short proof of a theorem of Riese
and Schmid on prime-power codegrees. In Section
\ref{sec:codegrees-simple-gps} we discuss the order-divisibility
property  and its consequences involving character codegrees of
finite simple groups. Using the results in the preceding sections,
we prove Theorem \ref{thm:main1} in Section \ref{sec:theoremA}.
Results on bounding the largest character degree are presented in
Section \ref{sec:largest-degree}. Finally, the proof of Theorem
\ref{thm:main4} is carried out in Sections \ref{sec:theoremD-Lie},
\ref{sec:theoremD-mixed-case}, and
\ref{sec:theoremD-alternating-sporadic}.


\section{Prime-power codegrees}\label{sec:prime-power-codegrees}

In this section we prove some results on prime-power character
codegrees. These results show that, in contrast to character
degrees, there are significant restrictions on the structure of
groups with faithful irreducible characters of prime/prime-power
codegree.

We mainly follow the notation from \cite{Isaacs} for character
theory and \cite{Conway,Carter85} for finite simple groups.
Throughout, for a positive integer $n$ and a prime $p$, we write
$n_p$ to denote the maximal $p$-power divisor of $n$ and
$n_{p'}:=n/n_p$ to denote the maximal divisor not divisible by $p$
of $n$. Let $N\nor G$ and $\theta\in\irr(N)$. We write
$\irr(G|\theta)$ for the set of irreducible constituents of
$\theta^G$ and $\irr(G|N)$ for the set of irreducible characters of
$G$ whose kernels do not contain $N$. If $G$ is a group, $\pi(G)$ is
the set of primes that divide $|G|$. If $n$ is an integer, $\pi(n)$
is the set of primes that divide $n$, and if $\mathcal{S}$ is a set
of integers, then $\pi(\mathcal{S})$ is the set of primes that
divide some member of $\mathcal{S}$.  As usual,
$\cd(G):=\{\chi(1):\chi\in\irr(G)\}$ is the set of all irreducible
character degrees of $G$. Other notation will be recalled or defined
when necessary.

We begin by collecting some known facts on character codegrees that
we will use without explicit mention.

\begin{lem}\label{lem:1}
Let $G$ be a finite group and $\chi\in\Irr(G)$. The following hold:
\begin{enumerate}[\rm(i)]
\item
If $\chi$ is not the principal character, then $\cod(\chi)>\chi(1)$.
\item
If $N\trianglelefteq G$ and  $N\leq\ker(\chi)$, then the codegree of
$\chi$ as a character of $G$ coincides with the codegree of $\chi$
viewed as a character of $G/N$.
\item
If $N\trianglelefteq\trianglelefteq G$ and $\theta\in\Irr(N)$ lies
under $\chi$, then $\cod(\theta)$ divides $\cod(\chi)$.
\item
$\pi(G)=\pi(\cod(G))$.
\item
If $G$ is abelian, then $\cod(G)=o(G)$, where $o(G)$ is the set of
orders of the elements of $G$.
\end{enumerate}
\end{lem}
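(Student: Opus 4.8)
The plan is to verify each of the five assertions using standard Clifford theory and basic properties of the codegree. For (i), I would note that if $\chi$ is not principal then $\ker(\chi)$ is a proper normal subgroup, so $|G:\ker(\chi)|>1$ and hence $|G:\ker(\chi)|\geq\chi(1)^2/1$ is too crude; instead observe that $\chi$, viewed as a faithful character of $G/\ker(\chi)$, has degree strictly less than $|G/\ker(\chi)|$ because a faithful irreducible character of a nontrivial group cannot have degree equal to the group order (equality in $\sum_{\psi}\psi(1)^2=|G/\ker(\chi)|$ would force $G/\ker(\chi)$ to be trivial). Dividing gives $\cod(\chi)=|G:\ker(\chi)|/\chi(1)>1$, and in fact $\chi(1)<|G:\ker(\chi)|/\chi(1)$ upon comparing with $\chi(1)^2\leq|G:\ker(\chi)|$ together with strictness — I would spell out that $\chi(1)^2<|G:\ker(\chi)|$ strictly when $G/\ker(\chi)\neq 1$, which yields $\chi(1)<\cod(\chi)$.

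For (ii), if $N\leq\ker(\chi)$ then $\ker(\chi)/N$ is the kernel of $\chi$ regarded as a character of $G/N$, so $|G/N:\ker(\chi)/N|=|G:\ker(\chi)|$ and the degree is unchanged; the codegrees therefore agree. For (iii), I would argue by induction along a subnormal chain to reduce to the case $N\nor G$. In that case, if $\theta$ lies under $\chi$, then $\chi$ vanishes off $\ker(\chi)$ and, letting $\bar N=N\ker(\chi)/\ker(\chi)\cong N/(N\cap\ker(\chi))$, one checks that $\theta(1)$ and the relevant index ratios line up so that $\cod(\theta)\mid\cod(\chi)$; concretely, $\cod(\theta)=|N:\ker(\theta)|/\theta(1)$ and one uses that $N\cap\ker(\chi)\leq\ker(\theta)$ together with the fact that $\chi(1)/\theta(1)$ divides $|G:N|$ (Clifford theory) and that $|N\ker(\chi):\ker(\chi)|$ divides $|G:\ker(\chi)|$ to assemble the divisibility. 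This is the step I expect to require the most care.

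For (iv), the inclusion $\pi(\cod(G))\subseteq\pi(G)$ is immediate since every codegree divides $|G|$. Conversely, given a prime $p\mid|G|$, there is a nonprincipal $\chi\in\irr(G)$ with $p\mid\cod(\chi)$: otherwise every codegree would be a $p'$-number, forcing (via a Sylow $p$-subgroup $P$ and restriction, or via the fact that $p\mid|G/\ker(\chi)|$ for some $\chi$ with $p\nmid\chi(1)$) a contradiction — more directly, take a chief factor of order divisible by $p$ and pull back a suitable character. For (v), if $G$ is abelian every $\chi\in\irr(G)$ is linear with $\chi(1)=1$, so $\cod(\chi)=|G:\ker(\chi)|=o(g)$ where $g$ generates the cyclic group $G/\ker(\chi)$; conversely every element order arises this way, giving $\cod(G)=o(G)$. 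The main obstacle is organizing (iii) cleanly, but all of these are routine and I would present them tersely, citing \cite{Isaacs} for the underlying Clifford-theoretic facts.
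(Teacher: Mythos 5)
Your overall plan differs from the paper's in that the paper simply cites earlier work for parts (i)--(iv) (Du--Lewis \cite{dl} for (i), Qian--Wang--Wei \cite{Qian} for (ii)--(iv)) and only argues (v) directly; you attempt to reprove everything, which is a legitimate alternative. Your arguments for (i), (ii) and (v) are essentially correct: in (i) the key observation that equality in $\chi(1)^2 \le |G:\ker\chi|$ forces $G/\ker\chi=1$, so the inequality is strict for nonprincipal $\chi$, is exactly right; (ii) is immediate; and your (v) matches the paper's (the paper phrases the converse via $G/\ker\chi$ cyclic of order $d$, which is what you do, and cites \cite[Lem.~2.2]{dl} for $o(G)\subseteq\cod(G)$).

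There are two genuine gaps. In (iii), the statement that ``$\chi$ vanishes off $\ker(\chi)$'' is simply false and should be deleted. More importantly, the ``assembly'' of the divisibility is not actually carried out, and the three facts you list do not obviously combine. A cleaner route, consistent with (ii): since $N\cap\ker\chi\leq\ker\theta$ (this follows from $\ker(\chi_N)=\bigcap_g\ker\theta^g$), one may replace $G$ by $G/\ker\chi$ and $N$ by $N/(N\cap\ker\chi)$, reducing to the case $\chi$ faithful. Then with $\chi(1)=et\,\theta(1)$ and $et\mid|G:N|$ (Clifford), one computes
$\cod(\chi)/\cod(\theta)=|G:N|\,|\ker\theta|/(et)$, which is an integer. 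In (iv), the ``via a Sylow $p$-subgroup $P$ and restriction'' and ``$p\mid|G/\ker\chi|$ for some $\chi$ with $p\nmid\chi(1)$'' suggestions do not produce such a $\chi$ without more work. The chief-factor idea does work, but you must handle the case where the chief factor $K/L$ is a product of nonabelian simple groups: one needs some $\psi\in\irr(K/L)$ with $p\mid\cod(\psi)$, which follows because not every nontrivial character degree of a nonabelian simple group of order divisible by $p$ can be divisible by the full $p$-part of its order (otherwise $\sum\chi(1)^2$ would force $|S|\equiv 1\pmod{|S|_p}$, a contradiction). As written, your sketch leaves that case unaddressed.
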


\begin{proof}
Part (i) is \cite[Lem. 2.1]{dl}. Parts (ii) and (iii) are contained
in \cite[Lem. 2.1]{Qian}, and part (iv) is  \cite[Lem. 2.4]{Qian}.
Now, we prove part (v). The inclusion $o(G)\subseteq\cod(G)$ follows
from \cite[Lem. 2.2]{dl}. Conversely, if $d\in\cod(G)$ there exists
$\chi\in\Irr(G)$ such that $d=|G:\ker(\chi)|$ (note that since $G$
is abelian, $\chi$ is linear). Since $G/\ker(\chi)$ is cyclic, we
conclude that $G$ has elements of order $d$.
\end{proof}

\subsection{Prime codegrees: characterizing perfect groups}

The goal in this subsection is to provide a characterization of
perfect groups in terms of the absence of prime codegrees.

\begin{thm}
\label{lem-p} Let $G$ be a finite group. Suppose that there exists
$\chi\in\Irr(G)$ faithful such that $\cod(\chi)=p$ is a prime
number. Then $G$ is either the cyclic group of order $p$ or a
Frobenius group with Frobenius kernel of order $p$.
\end{thm}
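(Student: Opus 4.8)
The plan is to analyze the structure forced by having a faithful irreducible character whose codegree $\cod(\chi) = |G:\ker\chi|/\chi(1) = |G|/\chi(1)$ equals a prime $p$. Since $\chi$ is faithful, $|G| = p\,\chi(1)$, so $\chi(1) = |G|/p$ and in particular $\chi$ is an irreducible character of degree $|G|/p$. The first observation is that $\chi(1) \mid |G|$ forces nothing new, but the size constraint is severe: a group with an irreducible character of index $p$ (meaning $\chi(1) = |G|/p$) is extremely restricted. I would invoke the standard fact (from $\sum \psi(1)^2 = |G|$ over $\psi \in \irr(G)$) that if $\chi(1)^2 \le |G| = p\,\chi(1)$ then $\chi(1) \le p$, hence $|G| = p\chi(1) \le p^2$. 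So $G$ is a $\{p\}$-group or has order $pq$ shapes — but more precisely $|G| \in \{p, p^2, p\cdot r\}$ with $r \le p$; let me be careful: $\chi(1) \le p$ and $|G| = p\chi(1)$, so $|G| \le p^2$.

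Now split into cases on $\chi(1)$. If $\chi(1) = 1$, then $\chi$ is a faithful linear character, so $G$ is cyclic, and $|G| = p$: this is the first conclusion. If $\chi(1) > 1$, then since $\chi(1)^2 \le |G| = p\chi(1)$ we get $\chi(1) \le p$, and equality $\chi(1) = p$ would give $|G| = p^2$ with an irreducible character of degree $p$ — impossible for a group of order $p^2$, which is abelian. So $1 < \chi(1) < p$, hence $\chi(1) \mid |G| = p\chi(1)$ automatically, and $|G| = p\chi(1)$ with $\chi(1) \mid |G|$. Write $|G| = p \cdot d$ where $d = \chi(1) < p$; since $\gcd(p,d) = 1$ (as $d < p$), $G$ has a normal Sylow — I would argue that the Sylow $p$-subgroup $P$ has order $p$. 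The key structural step: $\chi$ is the unique irreducible character of degree $d = |G|/p$ (from the degree-sum equation, $\sum_{\psi} \psi(1)^2 = |G| = d^2 + (\text{rest})$, forcing the rest to sum to $pd - d^2 = d(p-d)$, and since every other $\psi(1) \le d$, ... ). The cleanest route: the sum of squares of the \emph{remaining} degrees is $p d - d^2$; the linear characters contribute $|G:G'|$ to this. So $|G:G'| \le pd - d^2 = d(p-d)$, and combined with $d \mid |G'|$-type divisibility one pins down $G' $.

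The cleanest path to the Frobenius conclusion: I would show $G' $ is an elementary abelian (or at least a minimal normal) $p$-subgroup of order $p$ that is a complement to... wait — rather, I would show $Z(\chi)/\ker\chi = Z(\chi)$ has order dividing $\cod(\chi)/(\text{something})$; actually the relevant fact is $\chi(1)^2 \le |G:Z(\chi)| = |G:Z(G)|$ (faithfulness), so $d^2 \le |G| = pd$, recovering $d \le p$. To get Frobenius: let $P \in \mathrm{Syl}_p(G)$, $|P| = p$. Clifford theory applied to $\chi$ restricted to a normal $p$-complement $K$ (if it exists) — but does $K$ exist? Since $|G| = pd$ with $d < p$, every prime divisor $q$ of $d$ satisfies $q < p$, so $p$ is the largest prime and $P$ is... not automatically normal. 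Instead I would use: $\chi$ faithful of degree $d = |G|/p$ means $\chi$ vanishes off the "large" part; consider $\chi_P$ — $\chi(1) = d$ is coprime to $p$, so $\chi$ restricted to $P$ is a sum of $d$ linear characters, and faithfulness plus $|P|=p$ shows $P$ acts... The intended argument: $\chi(1) = |G|/|P|$ implies (by a theorem on characters of full index over a subgroup, e.g. the induced character $\lambda^G$ for $\lambda \in \irr(K)$, $K$ a complement) that $G$ is a Frobenius group with kernel $K$ of order $d$ — no wait, the statement says kernel of order $p$. So in fact $P$ is the Frobenius kernel: $\chi = \lambda^G$ where $\lambda$ is a nonprincipal linear character of $P$, $\chi(1) = |G:P| = d$, and for $\chi$ to be irreducible the inertia group of $\lambda$ in $G$ must be $P$ itself, which forces $G/P$ to act freely on $\irr(P)\setminus\{1\}$, i.e., $G$ is Frobenius with kernel $P$ of order $p$.

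\textbf{The main obstacle}, and where I would spend the most care, is establishing that $P$ (the Sylow $p$-subgroup of order $p$) is \emph{normal} in $G$, so that $\chi$ is induced from $P$ via Clifford theory in the first place; equivalently, ruling out the configuration where $P$ is not normal. For this I expect to use the faithfulness of $\chi$ together with the degree equation more cleverly — perhaps: the number of nonprincipal irreducible characters is small (at most... from $\sum_{\psi \ne 1_G} \psi(1)^2 = |G| - 1 = pd - 1$ with the top one being $d^2$), and then a counting/transfer argument, or directly that $O^{p'}(G)$ has a normal $p$-complement and comparing. Alternatively one invokes a known classification of groups with an irreducible character of $p'$-degree equal to $|G|/p$ — but since the paper emphasizes elementary proofs, I would instead argue: $\ker\chi = 1$ and $|G/\ker\chi : Z(\chi)/\ker\chi| \ge \chi(1)^2$ gives $Z(\chi) = Z(G)$ trivial-ish, then use that $G$ has a normal subgroup $N$ on which $\chi$ restricts irreducibly or not, inducting on the derived series. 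I'd aim to show directly $|G'| = p$ or $|G:G'| = p$ and then finish: if $|G:G'| = p$ then $G'$ has index $p$ and $\chi$ must be trivial on... hmm; most likely the clean finish is $G' = P$ of order $p$, $G/P$ abelian acting faithfully (by faithfulness of $\chi$) and fixed-point-freely on $P$, giving the Frobenius structure.
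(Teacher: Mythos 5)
Your overall route is genuinely different from the paper's and is, in principle, viable: the paper argues by induction on $|G|$, choosing $N\trianglelefteq G$ minimal with a constituent of codegree $p$ and working downward; you instead observe up front that $\chi(1)^2 < |G| = p\chi(1)$ forces $\chi(1) < p$ and hence $|G| < p^2$, so that $|G|_p = p$. From there the normality of $P\in\mathrm{Syl}_p(G)$ is immediate by Sylow counting ($n_p\mid\chi(1)<p$ and $n_p\equiv 1\pmod p$ give $n_p=1$) -- this is \emph{not} the real obstacle, even though you flag it as such.

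The genuine gap is in the passage to the Frobenius conclusion. You assert that ``$\chi=\lambda^G$ where $\lambda$ is a nonprincipal linear character of $P$, and for $\chi$ to be irreducible the inertia group of $\lambda$ must be $P$.'' This is circular: Clifford correspondence only gives $\chi=\psi^G$ for some $\psi\in\Irr\bigl(I_G(\lambda)\mid\lambda\bigr)$, and if $I_G(\lambda)>P$ then $\psi$ need not equal $\lambda$; the equality $\chi=\lambda^G$ is precisely what one must prove. Your alternative suggestion (``$G/P$ acts faithfully on $P$ by faithfulness of $\chi$'') is equally unjustified -- faithfulness of $\chi$ does not obviously give $C_G(P)=P$. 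The missing ingredient is the following short argument. Since $\lambda$ is a \emph{faithful} linear character of $P\cong C_p$, one has $I_G(\lambda)=C_G(P)=:C$, and the degree equation $\chi(1)=|G:C|\,\psi(1)=|G:P|$ forces $\psi(1)=|C:P|$; but $P\le Z(C)$, so $\psi(1)^2\le |C:Z(C)|\le |C:P|$, and $|C:P|^2\le |C:P|$ gives $C=P$. Schur--Zassenhaus then yields a complement $H$, and $C_G(P)=P$ means $H$ acts fixed-point-freely on $P$, so $G$ is Frobenius with kernel $P$. With this step filled in, your proof is arguably shorter and more self-contained than the paper's induction; as written, however, the key step is asserted rather than proved.
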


\begin{proof}
We argue by induction on $|G|$. Let $N\trianglelefteq G$ be minimal
such that there exists $\theta\in\Irr(N)$ lying under $\chi$ with
$\cod(\theta)=p$. Then
$$
\frac{|G|}{\chi(1)}=p=\cod(\theta)=\frac{[N:\ker(\theta)]}{\theta(1)},
$$
and we deduce that
$$
p=\cod(\chi)>\chi(1)=[G:N]|\ker(\theta)|\theta(1).
$$
In particular, $p$ does not divide any of the three factors in the
right hand side.

Suppose first that $N<G$. By the inductive hypothesis,
$N/\ker(\theta)$ is cyclic of order $p$ or a Frobenius group with
Frobenius kernel $K/\ker(\theta)$ of order $p$. In the latter case,
if $\lambda\in\Irr(K/\ker(\theta))$ lies under $\theta$ then
$\cod(\lambda)=p$. This contradicts the choice of $N$. (Note that
$K$ is normal in $G$ because $K$ is characteristic in $N$.)

Hence, we may assume that  $N/\ker(\theta)$ is cyclic of order $p$.
By Clifford's theorem the faithful character $\chi_N$ is a sum of
$G$-conjugates of $\theta$. Let $T$ be a complete set of
representatives in $G$ for these conjugates. By \cite[Lem.
2.21]{Isaacs}, the intersection of $\ker(\theta^g)$, where $g$ runs
over $T$, is trivial. We conclude that $N$ embeds into the direct
product \[\prod_{g\in T} N/\ker(\theta^g).\]  Each of the direct
factors has order $p$, and so $N$ is an elementary abelian
$p$-group. Since $p$ does not divide $|\ker(\theta)|$, we conclude
that $N$ is cyclic of order $p$. As $\theta$ is linear and
$\chi(1)=|G|/p$, we now have $\chi=\theta^G$. It follows that $G$ is
a Frobenius group with kernel $N$, as desired.

Now, we consider the case $N=G$. Let $M$ be a maximal normal
subgroup of $G$. Since $N=G$ the codegree of any irreducible
character of $N$ lying under $\chi$ is $1$. This means that
$\chi_M=\chi(1)\mathbf{1}_M$. But $\chi$ is faithful, so we deduce
that $M=1$ and $G$ is simple. If $G$ is abelian, then it is the
cyclic group of order $p$.  If $G$ is not abelian, then
$|G|/p=\chi(1)<\sqrt{|G|}$ and it follows that $|G|<p^2$.  By
Sylow's theorems, it follows that $G$ has a normal Sylow
$p$-subgroup. This contradiction completes the proof.
\end{proof}

The following consequence of Theorem \ref{lem-p} is already
mentioned in the introduction.

\begin{thm}
\label{thm-char} A finite nontrivial group $G$ is perfect if and
only if $G$ does not have any prime character codegree.
\end{thm}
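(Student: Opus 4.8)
The plan is to prove the two implications separately; the forward one will follow almost immediately from Theorem \ref{lem-p}, and the reverse one from an elementary construction.

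\textbf{Perfect implies no prime codegree.} I would argue by contraposition. Suppose $G$ possesses $\chi\in\Irr(G)$ with $\cod(\chi)=p$ a prime, and set $\bar{G}:=G/\ker(\chi)$. By Lemma \ref{lem:1}(ii) the character $\chi$, regarded as a (now faithful) character of $\bar{G}$, still has codegree $p$, so Theorem \ref{lem-p} applies to $\bar{G}$: either $\bar{G}$ is cyclic of order $p$, or $\bar{G}$ is a Frobenius group whose Frobenius kernel $\bar{N}$ has order $p$. In the first case $\bar{G}$ is itself a nontrivial abelian group. In the second case the Frobenius complement $\bar{G}/\bar{N}$ is nontrivial and acts fixed-point-freely on $\bar{N}\cong\Z/p$, hence embeds into $\Aut(\Z/p)\cong\Z/(p-1)$ and is in particular abelian. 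Thus in both cases $\bar{G}$ admits a nontrivial abelian quotient and so is not perfect; since $\bar{G}$ is a quotient of $G$ and every quotient of a perfect group is perfect, $G$ is not perfect.

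\textbf{Not perfect implies a prime codegree.} If $G$ is not perfect then $G/G'$ is a nontrivial abelian group. Choosing a prime $p\mid|G/G'|$, the abelian group $G/G'$ has a quotient isomorphic to $\Z/p$, i.e.\ there is $N\nor G$ with $G/N\cong\Z/p$. Any nontrivial $\lambda\in\Irr(G/N)$, inflated to $G$, is linear with kernel exactly $N$, so by Lemma \ref{lem:1}(ii) we get $\cod(\lambda)=|G:N|/\lambda(1)=p$, giving $G$ a prime codegree.

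I expect no serious obstacle: the content is essentially packaged in Theorem \ref{lem-p}. The only point requiring a moment's care — and hence the "main obstacle", minor as it is — is verifying that neither type of group produced by Theorem \ref{lem-p} is perfect; in the Frobenius case this rests on the observation that a complement acting fixed-point-freely on a group of prime order $p$ is cyclic of order dividing $p-1$, hence nonperfect.
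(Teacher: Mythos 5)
Your proof is correct and follows essentially the same route as the paper: the forward direction rests on Theorem \ref{lem-p} applied to $G/\ker(\chi)$ (the paper summarizes this by observing that the resulting quotient is solvable, which you verify more explicitly by showing both cases yield a nontrivial abelian quotient), and the reverse direction produces a prime codegree from a linear character of $G/G'$ exactly as the paper does.
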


\begin{proof}
By Lemma \ref{lem-p}, if $G$ has an irreducible character $\chi$ of
prime codegree then $G/\ker(\chi)$ is solvable. In particular, $G$ is
not perfect. Conversely, if $G$ is not perfect, then the abelian
group $G/G'$ has some irreducible character of prime codegree.
\end{proof}

\subsection{Prime power codegrees: the Riese-Schmid theorem}

Chillag and Herzog proved in \cite[Thm. 1]{Chillag1} that a simple
group does not possess nontrivial irreducible characters of prime
power codegree. The proof relied on a case by case analysis of the
simple groups, using the fact that, most of the times, they have
$p$-blocks of defect zero. This was generalized by Riese and Schmid
in \cite[Cor. 3]{RS} to quasisimple groups, using also block theory
and the classification. We offer a short proof of this result that
only depends on an easy consequence of the classification, which is
due to W. Kimmerle, R. Lyons, R. Sandling, and D.\,N. Teague
\cite[Thm. 3.6]{Kimmerle-et-al}:

\begin{lem}
\label{lem:aa} For every finite simple group $S$ and  prime $p$,
$|S|<(|S|_{p'})^2$.
\end{lem}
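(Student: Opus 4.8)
The plan is to reformulate the inequality, invoke the classification, and then carry out uniform order estimates family by family. Writing $|S|=|S|_p\cdot|S|_{p'}$, the assertion $|S|<(|S|_{p'})^2$ is equivalent to $|S|_p<|S|_{p'}$, i.e.\ to the statement that a Sylow $p$-subgroup of $S$ has order less than $\sqrt{|S|}$. This is only true (and only needed) for nonabelian $S$, so assume $S$ is nonabelian simple; if $p\nmid|S|$ it is trivial, so assume $p\mid|S|$. By the classification, $S$ is an alternating group $A_n$ ($n\ge5$), a sporadic group or the Tits group, or a group of Lie type, and I would treat these in turn. The sporadic groups and the Tits group form a finite list and are dispatched by inspection of their orders. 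For $A_n$, Legendre's formula gives $v_p(n!)=(n-s_p(n))/(p-1)\le(n-1)/(p-1)\le n-1$, so $|A_n|_p\le 2^{\,n-1}$ for every prime $p$ (the function $p^{1/(p-1)}$ being maximal at $p=2$); hence $(|A_n|_p)^2\le 4^{\,n-1}<n!/2=|A_n|$ whenever $4^{\,n}<2\cdot n!$, which holds for all $n\ge8$, and $n\in\{5,6,7\}$ is checked directly.

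The substance is the groups of Lie type, which I would split according to whether $p$ is the defining characteristic. Write $|S|=\tfrac1d\,q^{N}\prod_{i=1}^{r}(q^{d_i}-\varepsilon_i)$ with $q$ the defining field size, $N$ the number of positive roots, $r$ the rank, $d=|Z|$ coprime to $q$ (and bounded: $d\le q+1$ for unitary groups, $d\le4$ otherwise), each $d_i\ge2$, $\varepsilon_i\in\{\pm1\}$, and $\sum_i d_i=N+r$. If $p$ is the defining characteristic then $|S|_p=q^{N}$ exactly, and one must show $\prod(q^{d_i}-\varepsilon_i)>d\,q^{N}$. Since $q^{d_i}-\varepsilon_i\ge q^{d_i}(1-q^{-1})$ and $\sum d_i=N+r$, one gets $\prod(q^{d_i}-\varepsilon_i)\ge c\,q^{N+r}$ for an absolute constant $c$ (one may take $c$ close to $\tfrac12$, the worst case being $q=2$), and $c\,q^{r}>d$ then holds as soon as $r\ge3$; the rank-$1$ and rank-$2$ groups are finished from the explicit order formulas, together with the handful of small groups ($\mathrm{PSL}_2(2)$, $\mathrm{PSL}_2(3)$, $\mathrm{PSU}_3(2)$, $\mathrm{Sp}_4(2)'$, $G_2(2)'$, etc.) that are non-simple or degenerate. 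If $p$ is not the defining characteristic, then $|S|_p$ divides $\prod(q^{d_i}-\varepsilon_i)$; a cyclotomic/lifting-the-exponent computation (with the usual minor modification at $p=2$) bounds $v_p\big(\prod(q^{d_i}-\varepsilon_i)\big)$ in terms of $e:=\mathrm{ord}_p(\pm q)$ and the $d_i$, yielding $|S|_p\le q^{O(r)}$, which compared against $\sqrt{|S|}\ge q^{N/2}$ settles all groups of rank above a small explicit bound (since $N$ grows quadratically in $r$ for the classical types and the exceptional types give only finitely many groups), the remaining low-rank and small cases being checked directly with Zsigmondy's theorem at hand.

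The step I expect to be the real obstacle is the low-rank groups of Lie type in defining characteristic: there $|S|_p=q^{N}$ with $N$ close to $\tfrac12\log_q|S|$, so the coarse estimates above are not by themselves decisive and one must argue with the exact factorizations of $|S|$, keeping careful track of the center order $d$, the sign pattern $(\varepsilon_i)$ for the twisted types (including the evident modifications for Suzuki and Ree groups), and the short list of small groups that fail to be simple. Everything else is routine, if somewhat lengthy, book-keeping of orders.
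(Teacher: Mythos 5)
Note first that the paper does not prove this lemma at all: it is stated as a citation to Kimmerle, Lyons, Sandling, and Teague \cite[Thm.~3.6]{Kimmerle-et-al} and described as ``an easy consequence of the classification.'' So you are not reproducing or deviating from an in-paper argument; you are supplying the (omitted) CFSG case analysis. Your opening reduction $|S|<(|S|_{p'})^2\iff |S|_p<|S|_{p'}$ is correct and is the right way to set this up, your remark that the literal statement fails for abelian simple groups (and that the paper only applies it to nonabelian ones, namely $G/\bZ(G)$ for $G$ quasisimple) is apt, and the alternating and sporadic cases are handled cleanly.

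The step that fails as written is the central estimate in defining characteristic. From $q^{d_i}-\varepsilon_i\ge q^{d_i}(1-q^{-1})$ you conclude $\prod_i(q^{d_i}-\varepsilon_i)\ge c\,q^{N+r}$ for an \emph{absolute} constant $c\approx\tfrac12$; but that route only gives $c=(1-q^{-1})^r$, which at $q=2$ equals $2^{-r}$ and cancels exactly the factor $q^r$ you need to beat $d$. To rescue it you must use the facts you record but do not invoke at this point: $d_i\ge 2$ and (in the untwisted types) the $d_i$ pairwise distinct, so that $\prod_i(1-q^{-d_i})\ge\prod_{j\ge 2}(1-q^{-j})$, a convergent infinite product whose value at $q=2$ is about $0.577$; then $c\,q^r>d$ for $r\ge 3$ does follow. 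Relatedly, ``$d\le 4$ otherwise'' is false -- for $\PSL_n(q)$ one has $d=\gcd(n,q-1)$, which is unbounded -- although the weaker (and correct) bound $d<q$ is all the argument actually needs. Beyond these fixes, the non-defining-characteristic bound, the twisted and Suzuki/Ree degree data, and the low-rank remainder are acknowledged sketches; the plan is the natural one and can be made to work, but as it stands it is not yet a proof.
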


The following is a restatement of \cite[Lem. 1]{RS} in the language
of codegrees.

\begin{lem}
\label{lem-rs} Let $p$ be a prime. Let $G$ be a finite group and
$\chi\in\Irr(G)$ faithful. Then $\cod(\chi)$ is a power of $p$ if
and only if $\chi$ is induced from a Sylow $p$-subgroup of $G$.
\end{lem}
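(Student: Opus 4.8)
The statement is essentially a translation of \cite[Lem.~1]{RS} into the language of codegrees, so the plan is to record the relevant dictionary and then invoke that result for the substantive direction. The key observation is that, since $\chi$ is faithful, $\ker(\chi)=1$ and therefore $\cod(\chi)=|G|/\chi(1)$. Hence ``$\cod(\chi)$ is a power of $p$'' is equivalent to ``$|G|/\chi(1)$ is a power of $p$'', and this in turn says exactly that $|G|_{p'}$ divides $\chi(1)$ while $|G|/\chi(1)$ is a $p$-power -- which is precisely the hypothesis in \cite[Lem.~1]{RS}.

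For the easy implication I would argue directly: if $\chi=\psi^G$ with $\psi\in\Irr(P)$ for some $P\in\mathrm{Syl}_p(G)$, then $\chi(1)=|G:P|\,\psi(1)=|G|_{p'}\,\psi(1)$, and $\psi(1)$ divides $|P|$, so $\psi(1)$ is a power of $p$. Consequently $\cod(\chi)=|G|/\chi(1)=|G|_p/\psi(1)$ is a power of $p$. (Faithfulness of $\chi$ is not actually needed here, but it is part of the hypothesis.) For the converse, assuming $\cod(\chi)=|G|/\chi(1)$ is a power of $p$, one must produce $P\in\mathrm{Syl}_p(G)$ and $\psi\in\Irr(P)$ with $\psi^G=\chi$; by the dictionary above this is literally \cite[Lem.~1]{RS}. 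That result is proved there by induction on $|G|$: when $G$ is a $p$-group one takes $P=G$ and $\psi=\chi$; otherwise one chooses a minimal normal subgroup $N$, analyses the Clifford correspondence of $\chi$ relative to $N$ (the condition $|G|_{p'}\mid\chi(1)$ constrains the ramification and inertia indices over $N$), passes to a suitable proper subgroup to which the inductive hypothesis applies, and concludes by transitivity of induction. I would simply cite the statement rather than reproduce this argument, since only the statement is used in the sequel.

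Thus the only real content of the proof is recognising the equivalence: once one notes that $\cod(\chi)=|G|/\chi(1)$ for a faithful $\chi$, there is no further obstacle, and the genuine work lives inside the cited Riese--Schmid lemma.
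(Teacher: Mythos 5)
Your proposal is correct and matches the paper's approach: the paper simply states that the lemma is a restatement of \cite[Lem.~1]{RS} in codegree language and gives no separate proof, which is exactly the translation-plus-citation you describe. Your extra verification of the easy direction (computing $\cod(\chi)=|G|_p/\psi(1)$ from the induction formula) is sound and harmless, though the paper omits it.
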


\begin{thm}
\label{thm-quasi}
A quasisimple group $G$ does not possess nonprincipal characters of
prime power codegree.
\end{thm}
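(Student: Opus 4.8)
The plan is to suppose, for contradiction, that a quasisimple group $G$ has a nonprincipal $\chi\in\Irr(G)$ with $\cod(\chi)$ a power of a prime $p$. First I would reduce to the faithful case: let $Z$ be the part of $Z(G)$ contained in $\ker(\chi)$; then $\chi$ is a faithful character of $G/Z$, which is again quasisimple (a central quotient of a quasisimple group is quasisimple, or simple), and by Lemma~\ref{lem:1}(ii) the codegree is unchanged, so we may as well assume $\chi$ is faithful. Now I invoke Lemma~\ref{lem-rs}: $\cod(\chi)$ being a power of $p$ forces $\chi$ to be induced from a Sylow $p$-subgroup $P$ of $G$, say $\chi=\psi^G$ for some $\psi\in\Irr(P)$. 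In particular $\chi(1)=|G:P|\cdot\psi(1)$ is divisible by $|G:P|=|G|_{p'}$, so $|G|_{p'}$ divides $\chi(1)$.

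The next step is to bring in the squares-of-degrees bound. Since $\chi$ is faithful and nonprincipal and $G$ is quasisimple (hence nonabelian), we have $\chi(1)^2 < |G|$, so $\chi(1) < |G|^{1/2}$, and therefore $|G|_{p'} \le \chi(1) < |G|^{1/2}$, i.e. $|G| > (|G|_{p'})^2$. On the other hand, writing $S = G/Z(G)$ for the simple quotient, Lemma~\ref{lem:aa} gives $|S| < (|S|_{p'})^2$. The obstacle is that $|G|$ and $|S|$ differ by the factor $|Z(G)|$, so I need to rule out that the central factor rescues the inequality. Here I would use that the Schur multiplier of a finite simple group is small: more precisely, $|Z(G)|$ divides the order of the Schur multiplier $M(S)$, and for every finite simple group $S$ one has $|M(S)| < |S|_{p'}$ for every prime $p$ — indeed $|M(S)|^2 < |S|$ comfortably in all cases (the multipliers are bounded by small constants except for the families $\PSL_n(q)$, $\PSU_n(q)$ where $|M(S)|$ is at most $\gcd(n,q\mp 1)$, still far below $|S|_{p'}$). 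Then $|G|_{p'} = |Z(G)|_{p'}\,|S|_{p'} \le |Z(G)|\,|S|_{p'}$, and combining
\[
(|G|_{p'})^2 < |G| = |Z(G)|\,|S| < |Z(G)|\,(|S|_{p'})^2 \le |Z(G)|\,(|G|_{p'})^2,
\]
which only yields $|Z(G)| > 1$ — not yet a contradiction. So I would instead sharpen by noting that $p\nmid |G:P|$ is automatic and argue directly: $|G|_{p'} \mid \chi(1)$ and $\chi(1)<|G|^{1/2}=(|Z(G)|\,|S|)^{1/2}$ give $(|G|_{p'})^2 < |Z(G)|\,|S|$, hence $(|Z(G)|_{p'}|S|_{p'})^2 < |Z(G)|\,|S|$; using $|Z(G)|_{p'} \ge |Z(G)|/|Z(G)|_p$ and that $|Z(G)|_p \le |M(S)|$ is tiny, this forces $(|S|_{p'})^2 < |S|\cdot(\text{something} < |S|_{p'}^2/\text{lots})$, contradicting Lemma~\ref{lem:aa} once the multiplier bound is fed in.

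The cleanest route, and the one I would actually write, avoids the arithmetic fussiness: note that a nonprincipal faithful $\chi$ of a quasisimple $G$ restricts nontrivially to every nontrivial normal subgroup, in particular $\chi(1) > 1$ and $\chi$ does not factor through $G/Z(G)$ only if $Z(G)\not\le\ker\chi$, but when $Z(G)\le\ker\chi$ we may pass to $S$ and apply Chillag--Herzog's theorem (stated in the excerpt) directly; so the real content is when $\chi$ is faithful on $G$ with $Z(G)\ne 1$. Then $\psi\in\Irr(P)$ with $\chi=\psi^G$ has $Z(G)\le P$ and $\psi$ lies over a faithful linear character of $Z(G)$, forcing $|Z(G)|\mid\psi(1)^2\cdot[\text{stuff}]$ — more usefully, $\psi(1)\ge 1$ and $\chi(1)=|G:P|\psi(1)\ge|G|_{p'}$, so again $|G|_{p'}\le\chi(1)<|G|^{1/2}$. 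I would then cite the explicit Schur multiplier data ($|M(S)|\le 2\cdot\gcd(n,q-1)$ for $\PSL_n(q)$ etc.) to conclude $|Z(G)|^3<|S|$, whence $|G|=|Z(G)||S|<|S|^{4/3}$ while $|G|_{p'}=|Z(G)|_{p'}|S|_{p'}>|S|_{p'}/|Z(G)|$; squaring and combining with Lemma~\ref{lem:aa} gives $|S|_{p'}^2/|Z(G)|^2<|S|^{4/3}<(|S|_{p'})^{8/3}$, i.e. $|Z(G)|^2>(|S|_{p'})^{-2/3}$ — vacuous, so this still needs care. The honest main obstacle, then, is exactly handling the central extension: I expect the final argument to show $\chi$ cannot be faithful on $G$ when $Z(G)\ne 1$ (because $\psi^G$ faithful with $\psi\in\Irr(P)$ and $P\cap Z(G)=Z(G)$ would need $\psi$ faithful on $Z(G)$, yet $\cod(\psi)$ divides $\cod(\chi)=p^a$ by Lemma~\ref{lem:1}(iii) applied along $Z(G)\nor P$, forcing $|Z(G)|$ a $p$-power — fine — but then $\chi(1)=|G:P|\psi(1)$ and a counting of the $|Z(G)|$ distinct faithful extensions via Clifford theory pins down $\chi(1)$), reducing everything to the simple case $S$ where Lemma~\ref{lem:aa} combined with $|S|_{p'}\mid\chi(1)<|S|^{1/2}$ is an immediate contradiction.
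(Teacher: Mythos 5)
Your proof has the right skeleton but a genuine gap that you yourself flag repeatedly and never close. You correctly reduce to $\chi$ faithful, invoke Lemma~\ref{lem-rs} to get $\chi(1)\geq|G|_{p'}$, and aim to contradict this via a degree bound plus Lemma~\ref{lem:aa}. The problem is that you use the bound $\chi(1)^2<|G|$, which is too weak: it leaves a loose factor of $|\bZ(G)|$ when you try to pass from $|G|$ to the simple quotient $|G/\bZ(G)|$. Your attempts to absorb this factor through Schur-multiplier estimates, Clifford-theoretic counting, or showing $\bZ(G)$ is a $p$-group all fail to produce a contradiction, and you acknowledge as much (``which only yields $|\bZ(G)|>1$,'' ``this still needs care''). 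The final sentence asserts a ``reduction to the simple case'' that is never actually carried out, since a faithful $\chi$ of $G$ with $\bZ(G)\neq 1$ does not live on $S=G/\bZ(G)$.

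The paper's proof closes the gap in one stroke by using the \emph{sharper} degree bound $\chi(1)^2\leq|G:\bZ(G)|$ (Isaacs, Corollary~2.30), valid for every irreducible character of any finite group. With it, Lemma~\ref{lem:aa} applied to the simple group $G/\bZ(G)$ gives
$\chi(1)\leq|G:\bZ(G)|^{1/2}<|G/\bZ(G)|_{p'}\leq|G|_{p'}\leq\chi(1)$,
an immediate contradiction, with no Schur-multiplier data needed. (The paper in fact works with $K=\ker\chi\leq\bZ(G)$ and the bound $\chi(1)\geq|G/K|_{p'}$, rather than first reducing to the faithful case, but the logic is the same.) Replacing $\chi(1)^2<|G|$ by $\chi(1)^2\leq|G:\bZ(G)|$ is the one idea your argument is missing.
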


\begin{proof}
Suppose that there exists $\mathbf{1}_G\neq\chi\in\Irr(G)$ of
$p$-power codegree. Let $K:=\ker(\chi)$ and note that $K\leq\bZ(G)$.
By Lemma \ref{lem-rs}, $\chi$ is induced from a Sylow $p$-subgroup
of $G/K$. Therefore, $$\chi(1)\geq|G/K|_{p'}.$$

By Lemma \ref{lem:aa}, we know that
$$|G/\bZ(G)|<(|G/\bZ(G)|_{p'})^2\leq(|G/K|_{p'})^2.$$ Hence, by
\cite[Cor. 2.30]{Isaacs},
$$
\chi(1)\leq|G:\bZ(G)|^{1/2} <|G/K|_{p'},
$$
which violates the inequality above.
\end{proof}


The next result is a
restatement in terms of character codegrees of Theorem~B of
\cite{RS}. The proof in \cite{RS} uses Brauer's first and third main
theorems.
Recall that if a group $G$ has trivial $p'$-core
$\bO_{p'}(G)$, then it is defined to be $p$-constrained if the
$p$-core $\bO_p(G)$ contains its centralizer.

\begin{thm}[Riese-Schmid]
\label{thm-rs} Let $G$ be a finite group and let $p$ be a prime.
Suppose that $\chi\in\Irr(G)$ is faithful of $p$-power codegree.
Then $\bO_{p'}(G)=1$ and $G$ is $p$-constrained.
\end{thm}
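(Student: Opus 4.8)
The statement asserts that a faithful irreducible character of $p$-power codegree forces $\bO_{p'}(G)=1$ and $p$-constrainedness. The plan is to extract both conclusions from the key structural fact already established in Lemma~\ref{lem-rs}: such a $\chi$ is induced from a Sylow $p$-subgroup $P$ of $G$, say $\chi=\psi^G$ with $\psi\in\Irr(P)$ (and, since $\chi$ is faithful, $\psi$ may be taken faithful on $P$ after passing to the relevant quotient—but note $\ker(\chi)\leq\text{core of }P$, and since $\chi$ is faithful this core is trivial, so in fact $P$ contains no nontrivial normal subgroup of $G$).

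\emph{First} I would handle $\bO_{p'}(G)=1$. Let $R=\bO_{p'}(G)$. Since $\chi=\psi^G$ is induced from $P$, restricting to $R$ we get $\chi_R$ is a sum of $|G:P|$ conjugates of $\psi_{P\cap R}=\psi_1$ (the trivial character, since $P\cap R=1$); hence $\chi_R=\chi(1)\mathbf 1_R$, so $R\leq\ker(\chi)=1$. This is essentially immediate from the induction structure. \emph{Alternatively}, one can argue via Lemma~\ref{lem:1}(iii)/(iv): any irreducible constituent $\vartheta$ of $\chi_R$ has $\cod(\vartheta)\mid\cod(\chi)$, a $p$-power, while $R$ is a $p'$-group so $\cod(\vartheta)$ is a $p'$-number, forcing $\cod(\vartheta)=1$, i.e.\ $\vartheta=\mathbf 1_R$; again $R\leq\ker(\chi)=1$.

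\emph{Next}, for $p$-constrainedness: with $\bO_{p'}(G)=1$ we must show $\bC_G(\bO_p(G))\leq\bO_p(G)$. Set $Q=\bO_p(G)$ and $C=\bC_G(Q)$. The point is that $Q\leq P$, and I want to show $C\leq Q$. Consider $C$: it is normal in $G$, and $\bO_{p'}(C)\leq\bO_{p'}(G)=1$ by normality, so $C$ has trivial $p'$-core; thus $\bZ(C)$ has trivial $p'$-part?—more carefully, I would look at $[C,Q]=1$ together with the fact that $\chi$ is faithful and induced from $P$. Let $\vartheta\in\Irr(C)$ lie under $\chi$; then $\cod(\vartheta)\mid\cod(\chi)$ is a $p$-power, and by Lemma~\ref{lem-rs} applied inside $C$ (after factoring out $\ker\vartheta$), $\vartheta$ is induced from a Sylow $p$-subgroup of $C/\ker\vartheta$. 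But $Q\cap C=\bZ(Q)\leq \bZ(C\ker\vartheta/\ker\vartheta)$ sits inside that Sylow $p$-subgroup while also being central in $C$; pushing this through, the $p'$-part of $C/\ker\vartheta$ acts... — the cleanest route is: since $\chi$ is faithful, $\bigcap_{g}\ker(\vartheta^g)=1$, so $C$ embeds in a product of the $C/\ker(\vartheta^g)$, each of which (by the Riese–Schmid induction property inside $C$) has a self-centralizing Sylow $p$-subgroup; one then deduces that $C$ itself, having trivial $p'$-core, must be a $p$-group, whence $C\leq\bO_p(G)=Q$, giving $p$-constrainedness.

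\emph{The main obstacle} I anticipate is the second part: passing from "$\chi$ is induced from a Sylow $p$-subgroup of $G$" to genuine control over $\bC_G(\bO_p(G))$, because induction from $P$ does not obviously restrict nicely to the normal subgroup $C=\bC_G(\bO_p(G))$ unless $C\cap P$ is a Sylow $p$-subgroup of $C$ (which it is, by Dedekind's modular law since $C\supseteq\bO_p(G)$... actually $C\cap P\supseteq \bO_p(G)$ and $[P:C\cap P]$ is prime to $p$ because $P C/C$ is a $p$-group inside $G/C$, but we need $C\cap P\in\mathrm{Syl}_p(C)$, i.e.\ $[C:C\cap P]$ is prime to $p$—this holds since $|C|_p=|C\cap P|$ as $P\in\mathrm{Syl}_p(G)$ and $C\nor G$). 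Granting that, $\chi_C$ is a sum of conjugates of $(\psi_{C\cap P})^C$, so every constituent of $\chi_C$ is induced from $\mathrm{Syl}_p(C)$, hence of $p$-power codegree; combined with $\bigcap_g\ker(\vartheta^g)=1$ and Lemma~\ref{lem-rs}'s converse, $C/\ker(\vartheta^g)$ has a self-centralizing (indeed, after more work, $C$ is a $p$-group). Making the final step—"$C$ has a faithful set of characters each induced from a Sylow $p$-subgroup, and $\bO_{p'}(C)=1$, therefore $C$ is a $p$-group"—rigorous is where I expect to spend the most care; the Riese–Schmid original proof used Brauer's first and third main theorems precisely here, and I would either reproduce that block-theoretic argument for $C$ in place of $G$, or find an induction-theoretic substitute using that $\mathbf 1_{C\cap P}$ being a constituent of $\vartheta_{C\cap P}$ (from the induction structure) together with $[C,\bO_p(G)]=1$ forces $\bO_{p'}(C/\ker\vartheta)=1$ and then a Sylow $p$-subgroup of $C/\ker\vartheta$ is self-centralizing, so $\bO_{p'}(C)=1$ already gives $C=\bO_p(C)\leq\bO_p(G)$.
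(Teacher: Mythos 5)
Your argument for $\bO_{p'}(G)=1$ is fine and in fact a bit more direct than the paper's: you read it off from the structure of $\chi$ as an induced character (Lemma~\ref{lem-rs}) or from codegree divisibility, whereas the paper gets it from Clifford theory applied to a minimal normal subgroup. Either way this part is sound.

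The $p$-constrainedness half has a genuine gap, and you flag the right spot but don't close it. You reduce to showing that $C=\bC_G(\bO_p(G))$, which has trivial $p'$-core and is covered by faithful-modulo-kernel characters of $p$-power codegree, must be a $p$-group. But ``trivial $p'$-core plus every constituent of $\chi_C$ induced from a Sylow $p$-subgroup'' does \emph{not} by itself force $C$ to be a $p$-group, and in particular your closing step ``$\bO_{p'}(C)=1$ already gives $C=\bO_p(C)$'' is false in general (any nonabelian simple $C$ of order divisible by $p$ has $\bO_{p'}(C)=1$ and $\bO_p(C)=1$). What is missing is precisely the input that rules out nonabelian simple (more generally, quasisimple) composition factors sitting inside $C$: this is Theorem~\ref{thm-quasi} of the paper, which says a quasisimple group has no nonprincipal irreducible character of prime-power codegree. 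The paper's proof takes $K$ subnormal with $\bZ(\bO_p(G))\leq K\leq C$ and $K/\bZ(\bO_p(G))$ simple, uses $\bO_{p'}(C)=1$ together with the fact (already established in the first half) that $G$ has no nonabelian minimal normal subgroup to conclude $K'$ is quasisimple, and then derives a contradiction by locating a nonprincipal $\gamma\in\Irr(K')$ of $p$-power codegree under $\chi$, violating Theorem~\ref{thm-quasi}. Without some version of that quasisimple elimination step your sketch cannot be completed; the ``induction-theoretic substitute'' you gesture at is exactly what Theorem~\ref{thm-quasi} supplies and cannot be bypassed by the Sylow-induction structure alone.
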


\begin{proof}
By Theorem \ref{thm-quasi}, we know that $G$ is not simple. Let $N$
be a minimal normal subgroup of $G$ and let $\theta\in\Irr(N)$ lying
under $\chi$. Since $\theta$ has $p$-power codegree (by Lemma
\ref{lem:1}(iii)) and $\cod(\theta)>1$ (note that since $\chi$ is
faithful, $\theta\neq\mathbf{1}_N$), we deduce that $N$ is either an
elementary abelian $p$-subgroup or a direct product of nonabelian
simple groups of order divisible by $p$. In particular,
$\bO_{p'}(G)=1$.

We claim that $N$ is an elementary abelian $p$-group. Suppose that
$N=S_1\times\cdots\times S_t$, with $S_i\cong S$ for some nonabelian
simple group $S$ of order divisible by $p$.  We wish to reach a
contradiction. Since $\theta\neq\mathbf{1}_N$, there exists a
nonprincipal $\psi\in\Irr(S_i)$ lying under $\theta$ for some $i$.
Note that $\cod(\psi)$ is a power of $p$ and this contradicts
Theorem \ref{thm-quasi}. The claim follows.

Write $P:=\bO_p(G)$ and $C:=\bC_G(P)$. Note that $C\cap P=\bZ(P)$
and $\bO_{p'}(C)=1$ (because $\bO_{p'}(G)=1$).  We want to see that
$C\leq P$. Assume not. Take $K$ subnormal in $G$ such that
$\bZ(P)\leq K\leq C$ and $K/\bZ(P)$ is simple. Since $\bO_{p'}(C)=1$
and $G$ does not have nonabelian minimal normal subgroups, we
conclude that $K'$ is quasisimple.

Now, take $\gamma\in\Irr(K')$ lying under $\chi$. Again, we have that
$\gamma$ is not principal and $\cod(\gamma)$ is a $p$-power. This
contradicts Theorem \ref{thm-quasi}.
\end{proof}

We end this section with a variation of Theorem \ref{thm-quasi}.

\begin{thm}
\label{lem-cf} Let $G$ be a finite group. Suppose that $p$ is a
prime and  $\chi\in\Irr(G)$ is faithful of $p$-power codegree.  Then
$\cod(\chi)$ exceeds the $p$-part of the product of the orders of
the nonabelian composition factors in a composition series of $G$.
In particular, if $K/L$ is a non-abelian chief factor of $G$, then
$\cod(\chi)>|K/L|_p$.
\end{thm}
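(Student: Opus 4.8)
The plan is to induct on the composition length of $G$, comparing the structure forced by Lemma~\ref{lem-rs} against the accumulation of nonabelian composition factors. By Lemma~\ref{lem-rs}, $\cod(\chi)$ being a $p$-power means $\chi$ is induced from a Sylow $p$-subgroup $P$ of $G$, so $\chi(1) = |G:P| = |G|_{p'}$ and hence $\cod(\chi) = |G|_p$. Thus the claim is equivalent to showing $|G|_p$ strictly exceeds the $p$-part of $\prod_i |S_i|$, where $S_1,\dots,S_r$ are the nonabelian composition factors of $G$ (with multiplicity). Write $N(G) := \prod_i |S_i|$ for this product; I want $|G|_p > N(G)_p$, i.e. the $p$-part of the product of the \emph{abelian} (cyclic, prime-order) composition factors is a nontrivial $p$-power. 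Equivalently, $G$ has a nontrivial $p$-solvable "layer" contributing to the order, which is exactly the kind of conclusion Theorem~\ref{thm-rs} already hints at.

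First I would dispose of the base and degenerate cases. If $G$ has no nonabelian composition factor then $N(G) = 1$ and we need only $|G|_p > 1$; this holds because $\cod(\chi) = |G|_p$ is a nontrivial $p$-power (as $\chi \neq \mathbf{1}_G$). If $G$ is quasisimple or simple, Theorem~\ref{thm-quasi} rules out a nonprincipal $\chi$ of $p$-power codegree, so there is nothing to prove. For the inductive step, let $N$ be a minimal normal subgroup of $G$ and $\theta \in \Irr(N)$ lie under $\chi$; as in the proof of Theorem~\ref{thm-rs}, $\cod(\theta)$ is a nontrivial $p$-power, so $\theta \neq \mathbf{1}_N$ and $N$ is either elementary abelian of order $p^a$ ($a \geq 1$) or a direct product $S_1 \times \cdots \times S_t$ of nonabelian simple groups with $p \mid |S|$. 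The second possibility is excluded exactly as in Theorem~\ref{thm-rs}: a nonprincipal constituent on a single factor $S_i$ would have $p$-power codegree, contradicting Theorem~\ref{thm-quasi}. Hence $N$ is elementary abelian of order $p^a$.

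Now pass to $\overline{G} := G/N$. The image $\overline{\chi}$ need not be faithful, but writing $\overline{\chi} = \psi$ viewed on $G/\ker(\overline{\chi})$ we have $\cod(\overline{\chi}) = \cod(\psi)$ is still a $p$-power (Lemma~\ref{lem:1}(ii)), and $G/\ker(\overline{\chi})$ has $\psi$ faithful. The nonabelian composition factors of $G/\ker(\overline{\chi})$ are a sub-multiset of those of $\overline{G}$, which are precisely the nonabelian composition factors of $G$; and $|\ker(\overline{\chi})/N|$ is a product of prime-order groups, so refining we may assume after a harmless reduction that $\overline{\chi}$ itself is faithful on $\overline{G}$ — or more robustly, apply induction to the pair $(G/\ker(\overline{\chi}), \psi)$ directly. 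Either way, if $G/\ker(\overline{\chi})$ still has a nontrivial $p$-power codegree we get, by induction, $\cod(\psi) > N(G)_p$ (since the nonabelian composition factors of $G/\ker(\overline{\chi})$ sit inside those of $G$), and then $\cod(\chi) = |G|_p = p^a \cdot |\overline{G}|_p \geq p \cdot \cod(\psi) > N(G)_p$, done with room to spare. The only remaining case is when $\cod(\psi) = 1$, i.e. $G/\ker(\overline{\chi})$ is trivial, meaning $G = \ker(\overline{\chi}) \cdot$(stuff) collapses so that $N$ is essentially all of the ``visible'' part — here $\chi$ is linear on a group with $N \leq \ker\chi$ forced unless $G = N$ is cyclic of order $p$, in which case $N(G) = 1 < p = \cod(\chi)$.

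The main obstacle is the bookkeeping in the inductive step: passing from $G$ to a quotient can destroy faithfulness, and one must check that the multiset of nonabelian composition factors does not grow (it cannot, since quotients only delete composition factors) while the $p$-part of the order can only have been \emph{divided} by passing to $G/N$, so the factor $p^a = |N|$ recaptures exactly what is needed and supplies the strict inequality. A clean way to package this is: prove the sharper statement $\cod(\chi) = |G|_p$ and $|G|_p \geq p \cdot N(G)_p$ by induction, where the extra factor of $p$ always comes from the abelian minimal normal subgroup $N$ that the induction step produces (which must be a nontrivial $p$-group by the Theorem~\ref{thm-quasi} argument). The final sentence, about a nonabelian chief factor $K/L$, is then immediate: $|K/L|_p$ divides $N(G)_p$ since $K/L$ is a direct power of one of the $S_i$, so $\cod(\chi) > N(G)_p \geq |K/L|_p$.
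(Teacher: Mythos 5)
Your proposal has a fundamental error at the opening step that invalidates the whole reduction. Lemma~\ref{lem-rs} says that $\chi=\mu^G$ for some $\mu\in\Irr(P)$ with $P$ a Sylow $p$-subgroup of $G$, but it does \emph{not} say that $\mu$ is linear. Hence $\chi(1)=|G:P|\,\mu(1)=|G|_{p'}\mu(1)$ and $\cod(\chi)=|G|_p/\mu(1)$, which in general is strictly smaller than $|G|_p$ (for instance, take $G=P$ any nonabelian $p$-group with a faithful nonlinear irreducible character). So the theorem is \emph{not} equivalent to the inequality $|G|_p > N(G)_p$, and proving that inequality would give no lower bound for $\cod(\chi)$.

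There is a second gap in the inductive step: you form $\overline G := G/N$ and speak of ``the image $\overline\chi$'', but since $\chi$ is faithful and $N\neq 1$, $\chi$ does not factor through $G/N$; there is no such character of $\overline G$ to pass to. Clifford theory would lead you to the inertia subgroup $I_G(\theta)$ and a Clifford correspondent there, not to the quotient $G/N$, and it is not clear how to retain both faithfulness and $p$-power codegree while strictly decreasing composition length.

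The paper's proof sidesteps both problems and needs no induction at all. It uses only the inequalities $\cod(\chi)>\chi(1)$ (Lemma~\ref{lem:1}(i), valid for any nonprincipal $\chi$) and $\chi(1)\geq |G|_{p'}$ (valid precisely because $\chi=\mu^G$ is induced from a $p$-subgroup, regardless of $\mu(1)$). Since $n$ divides $|G|$, one has $|G|_{p'}\geq n_{p'}$, and multiplying the inequality $|S|_{p'}>|S|_p$ of Lemma~\ref{lem:aa} over all nonabelian composition factors gives $n_{p'}>n_p$. The chain $\cod(\chi)>\chi(1)\geq |G|_{p'}\geq n_{p'}>n_p$ is the entire argument.
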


\begin{proof}
Let $n$ be the product of the orders of the non-abelian composition
factors in a composition series of $G$. Using again that $\chi$ is
induced from a Sylow $p$-subgroup and  \cite{Kimmerle-et-al}, we
have
$$
\cod(\chi)>\chi(1)\geq|G|_{p'}\geq n_{p'}>n_p,
$$
as wanted.
\end{proof}


\section{An order-divisibility result for codegrees}\label{sec:codegrees-simple-gps}

The following order-divisibility result will be crucial in the
proofs of our main theorems.

\begin{thm}\label{thm-|S|divides|G|}
Suppose that $S$ is a finite simple group and $G$ a finite group
such that $\cod(S)\subseteq \cod(G)$. Then $|S|$ divides $|G|$.
\end{thm}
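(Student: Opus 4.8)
The plan is to pin down $|S|$ by looking, prime by prime, at the largest power of each prime that can appear in a codegree of $S$, and then to transfer this information to $G$ via the hypothesis $\cod(S)\subseteq\cod(G)$, using Lemma \ref{lem:1}(iii) together with the structural restrictions on prime-power codegrees proved in Section \ref{sec:prime-power-codegrees}. Concretely, fix $p\in\pi(S)$. Since $S$ is nonabelian simple, the sum-of-squares equation $\sum_{\chi\in\irr(S)}\chi(1)^2=|S|$ forces the existence of a nonprincipal $\chi\in\irr(S)$ with $p\nmid\chi(1)$ (the principal character contributes $1$, and if every nonprincipal degree were divisible by $p$ the remaining sum would be $\equiv 1\pmod p$, contradicting $|S|\equiv 0$). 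For such $\chi$ we have $\cod(\chi)=|S|/\chi(1)$, whose $p$-part is exactly $|S|_p$. Hence $|S|_p$ divides some codegree in $\cod(S)$, and therefore, by hypothesis, divides some codegree in $\cod(G)$.

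Next I would upgrade "$|S|_p$ divides some $c\in\cod(G)$" to "$|S|_p$ divides $|G|_p$". Pick $\psi\in\irr(G)$ with $|S|_p \mid \cod(\psi)$ and, among all such, take one with $|\ker(\psi)|$ maximal; replacing $G$ by $G/\ker(\psi)$ (which is harmless by Lemma \ref{lem:1}(ii)) we may assume $\psi$ is faithful. If $|S|_p\mid|G|_p$ we are done for this prime, so suppose not. The key point is to descend to a composition factor: choose a chief factor $K/L$ of $G$ and let $\theta\in\irr(K/L)$ lie under $\psi$; by Lemma \ref{lem:1}(iii), $\cod(\theta)\mid\cod(\psi)$. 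If $K/L$ is abelian (hence an elementary abelian $q$-group), $\cod(\theta)$ is either $1$ or $q$, so iterating down a chief series the $p$-part of $\cod(\psi)$ coming from abelian factors is bounded by $|G|_p$. The nonabelian factors are handled by Theorem \ref{thm-quasi}: a nonabelian composition factor of $G$ is simple and thus has no nonprincipal character of $p$-power codegree, but more to the point Theorem \ref{lem-cf} already tells us precisely that when $\psi$ has $p$-power codegree, $\cod(\psi)$ exceeds the $p$-part of the product of the nonabelian composition factor orders — which is not directly what we want here since $\cod(\psi)$ need not be a $p$-power. So instead I would argue as follows: write $\cod(\psi)=|G:\ker\psi|/\psi(1)=|G|/\psi(1)$ (as $\psi$ is faithful), so $|S|_p\mid |G|/\psi(1)$, i.e. $|S|_p\cdot\psi(1)_p\mid|G|_p$. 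If $|S|_p\nmid|G|_p$ then $p\mid\psi(1)$; now one feeds this into an inductive argument on $|G|$ — pass to $G/N$ for a suitable normal subgroup, or to a subnormal quasisimple subgroup — to force a nonprincipal character of $p$-power codegree on a quasisimple subquotient, contradicting Theorem \ref{thm-quasi}.

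Granting that $|S|_p\mid|G|_p$ for every prime $p\in\pi(S)$, and noting $\pi(S)=\pi(\cod(S))\subseteq\pi(\cod(G))=\pi(G)$ by Lemma \ref{lem:1}(iv), we conclude $|S|=\prod_{p\in\pi(S)}|S|_p$ divides $\prod_{p\in\pi(S)}|G|_p$, which divides $|G|$. I expect the main obstacle to be exactly the middle step: the codegree $c\in\cod(G)$ witnessing divisibility by $|S|_p$ is not itself a prime power, so one cannot invoke Theorem \ref{thm-quasi} on the nose, and a clean induction (choosing the right normal subgroup to quotient by, and correctly tracking how the $p$-part of a codegree is distributed across a chief series) needs to be set up carefully. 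The abelian chief factors are the source of the slack, and the nonabelian ones must be killed off by reducing to the faithful-character situation on a quasisimple subquotient where Theorem \ref{thm-quasi} applies.
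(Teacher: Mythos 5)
Your prime-by-prime approach is a genuinely different route from the paper's, and it does work --- but the ``main obstacle'' you flag is not actually there. The step you are worried about, upgrading ``$|S|_p$ divides some $c\in\cod(G)$'' to ``$|S|_p$ divides $|G|_p$'', is immediate: for any finite group $G$ and any $\chi\in\Irr(G)$, the degree $\chi(1)$ divides $|G:\ker\chi|$ (apply the usual degree-divides-order fact to the faithful character of $G/\ker\chi$), so $\cod(\chi)=|G:\ker\chi|/\chi(1)$ is an integer dividing $|G:\ker\chi|$, hence dividing $|G|$. Therefore $|S|_p\mid c\mid|G|$, and since $|S|_p$ is a power of $p$ this forces $|S|_p\mid|G|_p$. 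No chief series, no descent to quasisimple subquotients, no appeal to Theorem \ref{thm-quasi} or Theorem \ref{lem-cf} is needed. In fact you essentially wrote the decisive line yourself --- ``$|S|_p\cdot\psi(1)_p\mid|G|_p$'' --- and then failed to notice that this already yields $|S|_p\mid|G|_p$ outright, contradicting the very supposition $|S|_p\nmid|G|_p$ under which you were arguing; the entire inductive machinery you propose after that point is vacuous. Once you delete that paragraph and substitute the one-line observation that codegrees divide the group order, your argument is complete and correct: the first step (existence of a nonprincipal $p'$-degree character, via $\sum\chi(1)^2=|S|$) is fine, and the final assembly $|S|=\prod_p|S|_p\mid\prod_p|G|_p\mid|G|$ is fine.

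The paper's proof is different and does not go prime by prime. It uses the sum-of-squares formula globally: choosing for each nontrivial degree $d_i$ of $S$ (with multiplicity $m_i$) a $\chi_i\in\Irr(G)$ with $|S|/d_i=|G|/(\chi_i(1)|\ker\chi_i|)$, one gets
\[
\frac{|S|-1}{|S|^2}=\sum_{i}\frac{m_i d_i^2}{|S|^2}=\frac{\sum_i m_i\chi_i(1)^2|\ker\chi_i|^2}{|G|^2},
\]
so $|S|^2$ divides $|G|^2(|S|-1)$; since $\gcd(|S|^2,|S|-1)=1$, this gives $|S|^2\mid|G|^2$ and hence $|S|\mid|G|$. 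Both arguments are elementary and short. Your (corrected) argument is essentially the one the authors sketch informally in the introduction for the special case of two simple groups; the paper's formal proof of the general statement is slicker in that it avoids the prime-by-prime decomposition and the existence of $p'$-degree characters entirely.
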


\begin{proof}
Let $d_1,...,d_k$ be all the degrees of nontrivial irreducible
characters of $S$, and let $m_i$ ($1\leq i\leq k$) be the number of
those characters of degree $d_i$. By the assumption, for each $i$,
there exists $\chi_i\in\irr(G)$ such that
\[
\frac{|S|}{d_i}=\frac{[G:\ker(\chi_i)]}{\chi_i(1)}=\frac{|G|}{\chi_i(1)|\ker(\chi_i)|}.
\]
It follows that
\[
\sum_{i=1}^k \frac{m_i d_i^2}{|S|^2}= \sum_{i=1}^k \frac{m_i
\chi_i(1)^2|\ker(\chi_i)|^2}{|G|^2},
\]
and thus
\[
\frac{\sum_{i=1}^k m_i
\chi_i(1)^2|\ker(\chi_i)|^2}{|G|^2}=\frac{\sum_{i=1}^k m_i
d_i^2}{|S|^2}=\frac{|S|-1}{|S|^2}.
\]
Therefore $|S|^2$ divides $|G|^2(|S|-1)$, and the theorem follows.
\end{proof}

We record some consequences of Theorem~\ref{thm-|S|divides|G|} that
will be needed in subsequent sections.

\begin{cor}\label{thm-|S|divides|G|2}
Suppose that $S$ and $H$ are finite simple groups such that $\cod(S)=
\cod(H)$. Then $|S|=|H|$.
\end{cor}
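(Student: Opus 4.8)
The statement to prove is Corollary~\ref{thm-|S|divides|G|2}: if $S$ and $H$ are finite simple groups with $\cod(S)=\cod(H)$, then $|S|=|H|$.

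The plan is to apply Theorem~\ref{thm-|S|divides|G|} twice, in both directions, and then invoke simplicity. First I would observe that $\cod(S)=\cod(H)$ trivially yields both $\cod(S)\subseteq\cod(H)$ and $\cod(H)\subseteq\cod(S)$. Applying Theorem~\ref{thm-|S|divides|G|} to the first inclusion (with the simple group playing the role of ``$S$'' and the arbitrary finite group the role of ``$G$'') gives $|S|$ divides $|H|$. Applying it to the second inclusion, now with $H$ in the role of the simple group and $S$ in the role of the finite group, gives $|H|$ divides $|S|$.

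From $|S|\mid |H|$ and $|H|\mid |S|$ we conclude immediately that $|S|=|H|$, since these are positive integers. Note that for this we do not even need the full strength of simplicity beyond what Theorem~\ref{thm-|S|divides|G|} already requires; the corollary is essentially a formal consequence.

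There is no real obstacle here: the only subtlety is making sure the hypotheses of Theorem~\ref{thm-|S|divides|G|} are met in each application, namely that the group playing the role of ``$S$'' is genuinely a finite simple group (which it is, in both directions, by hypothesis) and that the containment of codegree sets is in the correct direction. Both checks are immediate from $\cod(S)=\cod(H)$. Thus the proof is two lines: invoke Theorem~\ref{thm-|S|divides|G|} with the roles of $S$ and $H$ interchanged to get mutual divisibility of the orders, hence equality.
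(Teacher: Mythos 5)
Your proof is correct and is exactly what the paper intends: apply Theorem~\ref{thm-|S|divides|G|} in both directions using the symmetry of $\cod(S)=\cod(H)$ to obtain mutual divisibility, hence $|S|=|H|$. This matches the paper's own (one-line) proof, which simply says the corollary follows directly from that theorem.
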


\begin{proof}
This directly follows from Theorem \ref{thm-|S|divides|G|}. (Or, alternatively, from the comment in the paragraph that precedes the statement of Theorem \ref{thm:main4}.)
\end{proof}


\begin{lem}\label{lem-x=H/S}
Suppose that $S$ and $H$ are finite nonabelian simple groups such
that $\cod(S)\subseteq \cod(H)$. Let $x:=|H|/|S|$. Then $x\in
\mathbb{N}$ and $dx\in \cd(H)$ for every $1\neq d\in\cd(S)$.
\end{lem}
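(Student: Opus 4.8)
The plan is to combine the order-divisibility fact $|S|\mid |H|$, which is immediate from Theorem~\ref{thm-|S|divides|G|}, with the squares-of-degrees identity used in its proof, now exploited in the reverse direction. Setting $x:=|H|/|S|\in\mathbb{N}$, I first recall from the proof of Theorem~\ref{thm-|S|divides|G|} that for each nontrivial $d\in\cd(S)$ of multiplicity $m_d$ there is a character $\chi_d\in\irr(H)$ with $|S|/d=|H|/(\chi_d(1)|\ker(\chi_d)|)$, i.e. $\chi_d(1)|\ker(\chi_d)|=dx$. In particular $dx\in\cd(H)$ is \emph{a priori} the product of a degree and the order of a kernel; the content of the lemma is that the kernel can be taken trivial, i.e. that $dx$ itself is a character degree of $H$.

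The key step is to use simplicity of $H$ together with a multiplicity count to force $|\ker(\chi_d)|=1$ for \emph{every} $d$. Since $H$ is simple, the only normal subgroups are $1$ and $H$, so $|\ker(\chi_d)|$ is either $1$ or $|H|$; the latter would give $\chi_d=\mathbf{1}_H$, hence $dx=|H|$, forcing $d=|S|$, impossible for a character degree of a nonabelian simple group (degrees are bounded by $\sqrt{|S|}$). Therefore $\ker(\chi_d)=1$ and $\chi_d(1)=dx$, which is exactly the claim $dx\in\cd(H)$.

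Actually, one should be slightly careful about whether the $\chi_d$ produced in the earlier proof are genuinely distinct across distinct $d$, since the statement "$dx\in\cd(H)$ for every $1\ne d\in\cd(S)$" only needs existence for each $d$ separately, not a bijection; so the multiplicity bookkeeping from Theorem~\ref{thm-|S|divides|G|}'s proof is not even needed here. Thus the argument reduces to: apply Theorem~\ref{thm-|S|divides|G|} to get $x\in\mathbb{N}$; for a fixed nontrivial $d\in\cd(S)$, pick the corresponding $\chi_d\in\irr(H)$ with $\chi_d(1)|\ker(\chi_d)|=dx$ from the displayed computation; use simplicity of $H$ to conclude $\ker(\chi_d)\in\{1,H\}$; rule out $\ker(\chi_d)=H$ because $dx<|H|$ (as $d<|S|$, indeed $d\le\sqrt{|S|}<|S|$ since $S$ is nonabelian simple); hence $\ker(\chi_d)=1$ and $\chi_d(1)=dx\in\cd(H)$.

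The only mild obstacle is making sure the bound $d<|S|$ (equivalently that no nontrivial irreducible character of a nonabelian simple group has degree equal to the group order) is invoked correctly — this is the standard fact $\chi(1)^2<|S|$ for $\chi$ nontrivial when $S$ has no nontrivial linear characters, or simply that $\chi(1)\le\sqrt{|S|}$ by \cite[Cor. 2.30]{Isaacs} applied with trivial center. With that in hand the proof is a short deduction from Theorem~\ref{thm-|S|divides|G|} and its proof.
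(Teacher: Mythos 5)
Your argument is correct and follows essentially the same route as the paper's: invoke Theorem~\ref{thm-|S|divides|G|} for $x\in\mathbb{N}$, observe that $|S|/d$ is a nontrivial codegree of $S$ and hence of $H$, and use simplicity of $H$ to force the corresponding kernel to be trivial so that $\chi_d(1)=dx$. The paper simply writes $|S|/d=|H|/\chi(1)$ directly (implicitly using that a nontrivial irreducible character of a simple group is faithful), where you spell out the dichotomy $\ker(\chi_d)\in\{1,H\}$ and rule out the second case; this is a presentational difference only.
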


\begin{proof}
We know that $x\in \mathbb{N}$ by Theorem~\ref{thm-|S|divides|G|}. For
each $1\neq d\in\cd(S)$, we have $|S|/d\in \cod(H)$, and thus there
exists some $\chi\in\irr(H)$ such that $|S|/d=|H|/\chi(1)$, implying
that $\chi(1)=dx$, as claimed.
\end{proof}

\begin{lem}\label{lem-chi-p-power}
Let $S$ and $H$ be finite simple groups of Lie type. Suppose that
the defining characteristic of $H$ is $p$ and $\cod(S)\subseteq
\cod(H)$. Then $|S|_{p'}=|H|_{p'}$ and there exists $\chi\in\irr(S)$
such that $\chi(1)=|S|_p$.
\end{lem}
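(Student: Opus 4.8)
The plan is to exploit the fact, established in the discussion preceding Theorem~\ref{thm:main4}, that for a simple group the set $\cod(S)$ determines both $|S|$ and the whole degree set $\cd(S)$ together with the precise power of each prime appearing in a codegree. Concretely, for each prime $r$ dividing $|S|$, the group $S$ possesses a nontrivial irreducible character of $r'$-degree (by the sum-of-squares equation), and hence $S$ has a codegree divisible by $|S|_r$; moreover $|S|_r$ is the largest $r$-power dividing any codegree of $S$. The same statement holds for $H$. From $\cod(S)\subseteq\cod(H)$ we therefore get, for the defining prime $p$ of $H$: the largest $p$-power dividing a member of $\cod(S)$ is at most the largest $p$-power dividing a member of $\cod(H)$, i.e. $|S|_p \mid |H|_p$, in fact $|S|_p \le |H|_p$.

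Next I would use Theorem~\ref{thm-|S|divides|G|}, which gives $|S|\mid|H|$, hence $|S|_{p'}\mid|H|_{p'}$ and $|S|_p\mid|H|_p$ again. The crucial extra input is that $H$, being simple of Lie type in characteristic $p$, has an irreducible character of degree exactly $|H|_p$ — namely the Steagger... the Steinberg character $\St_H$, whose degree is the full $p$-part of $|H|$. Its codegree is $\cod(\St_H)=|H|/|H|_p = |H|_{p'}$, which is a $p'$-number. The point is that $|H|_{p'}$ is the largest $p'$-number among the codegrees of $H$ in the following sense: every codegree of $H$ divides $|H|$, so every codegree of $H$ is at most $|H|$, and the $p'$-part of any codegree divides $|H|_{p'}$. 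Combining: every codegree in $\cod(S)$ has $p'$-part dividing $|H|_{p'}$. But taking a nontrivial $\chi\in\irr(S)$ of $p'$-degree (which exists since $p\mid|S|$), its codegree $|S|/\chi(1)$ is divisible by $|S|_p$ and has $p'$-part... hmm, I need to be more careful: I want to pin down $|S|_{p'}=|H|_{p'}$.

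Here is the cleaner route. Since $|S|\mid|H|$, write $|H|=|S|\cdot x$ with $x\in\N$. Suppose for contradiction that $|S|_{p'} < |H|_{p'}$, so some prime $\ell\ne p$ has $|S|_\ell < |H|_\ell$, i.e. $\ell \mid x$. Consider $\St_H\in\irr(H)$ with $\cod(\St_H)=|H|_{p'}$, which is divisible by $|H|_\ell > |S|_\ell$. On the other hand I claim no codegree of $S$ is divisible by $|H|_\ell$: indeed the $\ell$-part of any codegree of $S$ divides $|S|_\ell < |H|_\ell$. This shows $\cod(\St_H)\notin\cod(S)$, which is consistent with the hypothesis $\cod(S)\subseteq\cod(H)$ and gives nothing. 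So that direction is the wrong one — I instead need a codegree of $S$ that forces a large $p'$-part in $H$. Take $\chi\in\irr(S)$ nontrivial of $p'$-degree; then $\cod(\chi)=|S|/\chi(1)$ is divisible by $|S|_p$. Its $p'$-part is $|S|_{p'}/\chi(1)$, a divisor of $|S|_{p'}$; that alone does not reach $|S|_{p'}$. The right statement to use is: among all $\chi\in\irr(S)$, since $b(S)$ (the largest degree) divides... no. Let me reconsider: actually $\St_S$ has degree $|S|_p$ and codegree $|S|_{p'}$, and this is a $p'$-number lying in $\cod(S)\subseteq\cod(H)$; since every codegree of $H$ divides $|H|$ and is realized as $|H|/\psi(1)$, we get $\psi(1)=|H|/|S|_{p'}=|S|_p\cdot x$ for some $\psi\in\irr(H)$. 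But $\psi(1)\mid|H|$ forces $|S|_p\cdot x \mid |S|\cdot x$, automatic. Hmm — the genuinely useful constraint is the $p$-part: the largest power of $p$ dividing a codegree of $S$ is $|S|_p$ (attained by a $p'$-degree character), and likewise for $H$; since $\cod(S)\subseteq\cod(H)$, the largest $p$-power in $\cod(S)$ is $\le$ that in $\cod(H)$, giving $|S|_p\le|H|_p$, hence with $|S|\mid|H|$ also $|S|_{p'}\le|H|_{p'}$; but we must get equality.

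So the main obstacle, and the step I would spend real effort on, is upgrading $|S|\mid|H|$ to the sharp statement $|S|_{p'}=|H|_{p'}$. The way to close this: $S$ has an irreducible character of degree $|S|_p$ (the Steinberg character — and establishing \emph{this} is precisely the second assertion of the lemma, so I should prove it first and it is standard: $\St_S(1)=|S|_p$ for $S$ of Lie type in characteristic $p$). Then $\cod(\St_S)=|S|_{p'}\in\cod(S)\subseteq\cod(H)$, so there is $\psi\in\irr(H)$ with $|H|/\psi(1)=|S|_{p'}$, i.e. $\psi(1)=|H|\cdot|S|_p/|S| = x\cdot|S|_p$. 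Since $\psi(1)\mid|H|=x|S|$ we again only get $|S|_p\mid|S|$, useless — unless I invoke that $\psi(1)_p\le|H|_p$: writing $\psi(1)=x|S|_p$, the $p$-part is $x_p|S|_p$, and $x_p|S|_p=\psi(1)_p\le|H|_p=x_p|S|_p$, still trivial. The honest resolution is to also run the argument symmetrically is impossible since we only have one-sided containment; instead, take $\chi\in\irr(H)$ of $p'$-degree with $\cod(\chi)$ divisible by $|H|_p$: every $p$-part of a codegree of $H$ is $\le|H|_p$, and for $S$, $\le|S|_p$; since we want to compare $p'$-parts, use that $\cod(\St_S)=|S|_{p'}$ is a $p'$-number in $\cod(H)$, and the only $p'$-numbers among $\cod(H)$ that are "large" — being $|H|/\psi(1)$ with $\psi(1)$ divisible by $|H|_p$ (equivalently $\psi$ of $p$-defect... ) — are divisors of $|H|_{p'}$; so $|S|_{p'}\mid|H|_{p'}$. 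For the reverse, $|H|_{p'}=\cod(\St_H)$; I do not know a priori $\cod(\St_H)\in\cod(S)$, but I do know $|S|\mid|H|$ so $|S|_{p'}\mid|H|_{p'}$, and I will argue $|H|_{p'}\mid|S|_{p'}$ by showing $x=|H|/|S|$ is a $p$-power: take any $1\ne d\in\cd(S)$; by Lemma~\ref{lem-x=H/S}, $dx\in\cd(H)$, so $dx\mid|H|$; applying this to $d=|S|_p=\St_S(1)$ gives $|S|_p\cdot x\mid|H|=|S|x$, again trivial, so the real leverage is applying it to an $\ell$-defect-zero character for each prime $\ell\ne p$ dividing $|H|_{p'}$: such $\chi\in\irr(H)$ has $|H|_\ell\mid\chi(1)$, and if this $\chi$ had codegree in $\cod(S)$ we would force $|S|$ divisible by... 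This is the crux, and I expect to resolve it by combining (a) $\St_S(1)=|S|_p$, (b) the characterization that the maximal $p'$-codegree of $H$ equals $|H|_{p'}$, attained exactly when the degree is divisible by $|H|_p$, and (c) $|S|\mid|H|$, concluding $|S|_{p'}=|H|_{p'}$ and simultaneously recording that $\St_S$ furnishes the required $\chi\in\irr(S)$ with $\chi(1)=|S|_p$.
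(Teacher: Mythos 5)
Your approach has a fundamental gap: you assume throughout that the Steinberg character of $S$ has degree $|S|_p$, and you even write explicitly ``$\St_S(1)=|S|_p$ for $S$ of Lie type in characteristic $p$''. But the hypothesis only puts $p$ as the defining characteristic of \emph{$H$}; the group $S$ is merely assumed to be of Lie type and could be defined over a field of some other characteristic $\ell$, in which case $\St_S(1)=|S|_\ell\neq|S|_p$. Indeed, the entire point of this lemma, together with the subsequent Proposition~\ref{prop:same-char}, is to \emph{force} $S$ and $H$ to share a defining characteristic, so you cannot assume this at the outset. The existence of some $\chi\in\irr(S)$ with $\chi(1)=|S|_p$ is the genuinely nontrivial conclusion of the lemma, not a free starting fact.

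The ingredient the paper uses and which is missing from your attempt is the Michler--Willems theorem: a finite simple group of Lie type has an irreducible character of $p$-defect zero for \emph{every} prime $p$, not just the defining one. This yields $\chi\in\irr(S)$ whose codegree $|S|/\chi(1)$ is coprime to $p$. Then one needs the clean statement you circled around several times without landing: for $H$ simple of Lie type in characteristic $p$, the Steinberg character is the \emph{unique} irreducible character of $p$-defect zero, so $|H|_{p'}$ is the only nontrivial $p'$-number in $\cod(H)$. Hence $|S|/\chi(1)=|H|_{p'}$, i.e.\ $|S|=\chi(1)|H|_{p'}$. Combining with Theorem~\ref{thm-|S|divides|G|} ($|S|\mid|H|$) gives $\chi(1)\mid|H|_p$, so $\chi(1)$ is a $p$-power; since $\chi$ has $p$-defect zero, $\chi(1)_p=|S|_p$, whence $\chi(1)=|S|_p$ and therefore $|S|_{p'}=|H|_{p'}$. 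Even in the special case where you could legitimately use $\St_S$, your attempt never actually establishes the uniqueness of the nontrivial $p'$-codegree of $H$, which is exactly what closes the argument.
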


\begin{proof}
We first observe that $|S|$ is divisible by $p$ because otherwise
every codegree of $S$ is not divisible by $p$ but the only
nontrivial codegree of $H$ not divisible by $p$ is
$|H|_{p'}=|H|/\mathrm{St}_H(1)$.

By \cite{Michler,Willems}, $S$ has an irreducible character, say
$\chi$, of $p$-defect $0$, so that $\cod(\chi)$ is coprime to $p$.
Therefore we have
\[
|S|/\chi(1)= |H|_{p'}.
\]
It follows from Theorem~\ref{thm-|S|divides|G|} that
$\chi(1)|H|_{p'}=|S|$ divides $|H|$, implying that
$|S|_{p'}=|H|_{p'}$ and $\chi(1)=|S|_p$.
\end{proof}

Remark that when $p\geq 5$, the above result holds for all $S$. This
is because, in such case, irreducible $p$-defect zero characters
still exist in alternating groups (by \cite{Granville-Ono}) and in
sporadic simple groups \cite{Conway}.

\begin{lem}\label{lem-chi-p-power-2}
Let $S$ be a finite simple group of Lie type and $H$ a finite
nonabelian simple group. Suppose that $\cod(S)\subseteq \cod(H)$ and
there are primes $p\neq r$ such that $H$ has a unique character
degree divisible by each $|H|_{p}$ and $|H|_r$. Then $|S|=|H|$ and
$\cd(S)\subseteq \cd(H)$.
\end{lem}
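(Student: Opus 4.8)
The plan is to reduce everything to the single claim that $x:=|H|/|S|=1$. Granting this, Lemma~\ref{lem-x=H/S} gives $d=dx\in\cd(H)$ for every $1\neq d\in\cd(S)$, hence $\cd(S)\subseteq\cd(H)$, while $|S|=|H|$ is immediate; that is the whole statement. Recall that $x\in\N$ by Theorem~\ref{thm-|S|divides|G|}, and $dx\in\cd(H)$ for each $1\neq d\in\cd(S)$ by Lemma~\ref{lem-x=H/S}.

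The first step converts the uniqueness hypothesis into a restriction on $\cd(S)$. Since $|H|=x|S|$, for every prime $\ell$ and every divisor $d$ of $|S|$ one has $|H|_\ell\mid dx$ if and only if $|S|_\ell\mid d$ (both are equivalent to $v_\ell(d)=v_\ell(|S|)$). Consequently, if $a\in\cd(H)$ is the unique degree of $H$ divisible by $|H|_p$, then any two nontrivial $d_1,d_2\in\cd(S)$ with $|S|_p\mid d_1$ and $|S|_p\mid d_2$ satisfy $d_1x=a=d_2x$, whence $d_1=d_2$; thus $S$ has at most one nontrivial character degree divisible by $|S|_p$, and likewise at most one divisible by $|S|_r$. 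In particular $p,r\in\pi(S)$: if say $p\nmid|S|$ then $|S|_p=1$ divides every character degree, forcing $|\cd(S)|\leq 2$, which is impossible for a nonabelian simple group. I would also record, with $\ell_0$ the defining characteristic of the simple group $S$ of Lie type, that the Steinberg character makes $|S|_{\ell_0}$ the unique nontrivial degree of $S$ of full $\ell_0$-part; hence $|S|/|S|_{\ell_0}\in\cod(S)\subseteq\cod(H)$ yields a degree $x\,|S|_{\ell_0}$ of $H$ divisible by $|H|_{\ell_0}$, which must be the corresponding unique degree of $H$ whenever $\ell_0\in\{p,r\}$.

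For the endgame I would first bound $H$: since the least nontrivial codegree of a finite simple group $G$ is $|G|/b(G)$ with $b(G)$ its largest character degree, the inclusion $\cod(S)\subseteq\cod(H)$ gives $|S|/b(S)\geq|H|/b(H)$, and $b(H)^2<|H|$ then forces $|H|<|S|^2$; so $x<|S|$, and for our fixed $S$ only finitely many simple groups $H$ survive (at most two of each admissible order, by the Artin--Tits simple order theorem \cite{Kimmerle-et-al}). It remains to discard every candidate with $x>1$. When $S$ and $H$ share the defining characteristic this is handled by Lemma~\ref{lem-chi-p-power}; otherwise I would contradict one of the standing hypotheses --- for instance by tracking the Steinberg codegree $|S|/|S|_{\ell_0}$ through $\cod(H)$ and comparing $\ell_0$-parts, or by showing that the multiple $x|S|$ forces $H$ to carry two distinct character degrees divisible by $|H|_p$ (or by $|H|_r$), violating uniqueness. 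The main obstacle, I expect, is the low Lie rank families --- above all $\PSL_2(q)$ and $\PSL_3(q)$ --- together with the primes $2$ and $3$: there $S$ may genuinely possess only one nontrivial character degree of full $p$-part (or of full $r$-part), so the contradiction cannot arise from that mechanism and must instead be teased out by a direct comparison of $\cd(S)$, $\cod(S)$ with the orders and degree patterns of the finitely many admissible $H$. Once $x=1$ is forced throughout, the conclusion follows as in the first paragraph.
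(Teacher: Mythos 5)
Your proposal is incomplete at exactly the point where the actual work has to happen. The final paragraph is a plan, not an argument: phrases such as ``I would contradict one of the standing hypotheses,'' ``I expect,'' and ``must instead be teased out by a direct comparison'' concede that the candidates with $x>1$ are not actually eliminated. You even identify the low-rank families and the primes $2,3$ as ``the main obstacle'' and then leave them untouched. The intermediate claim that the same-characteristic case ``is handled by Lemma~\ref{lem-chi-p-power}'' is also not right as stated: that lemma requires $H$ to be of Lie type and delivers only $|S|_{p'}=|H|_{p'}$ in one characteristic, which by itself does not give $|S|=|H|$.

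The paper's proof is a one-liner whose idea you never use. Since $S$ is of Lie type, by Michler--Willems (\cite{Michler,Willems}) it has a $p$-defect-zero character $\chi_p$ \emph{and} an $r$-defect-zero character $\chi_r$, so $\cod(\chi_p)$ is coprime to $p$ and $\cod(\chi_r)$ is coprime to $r$. The uniqueness hypothesis forces $H$ to have exactly one nontrivial codegree coprime to $p$ and exactly one coprime to $r$, so these must equal $\cod(\chi_p)$ and $\cod(\chi_r)$ respectively. Running the divisibility argument of Lemma~\ref{lem-chi-p-power} (via Theorem~\ref{thm-|S|divides|G|}) once for each prime gives $|S|_{p'}=|H|_{p'}$ and $|S|_{r'}=|H|_{r'}$; since $p\neq r$, these equalities together give $|S|_\ell=|H|_\ell$ for every prime $\ell$, hence $|S|=|H|$, and then Lemma~\ref{lem-x=H/S} with $x=1$ gives $\cd(S)\subseteq\cd(H)$. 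No bound $|H|<|S|^2$, no enumeration of candidates, and no appeal to the Artin--Tits simple order theorem is needed. Your second paragraph, deducing that $S$ has at most one nontrivial degree of full $p$-part, is a correct observation but points in the wrong direction (it constrains $\cd(S)$ rather than pinning down $|H|$) and is never put to use; the ingredient you needed instead is the defect-zero machinery for $S$ at both primes $p$ and $r$.
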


\begin{proof}
Repeating the arguments in the proof of Lemma~\ref{lem-chi-p-power},
we have $|S|_{p'}=|H|_{p'}$ and $|S|_{r'}=|H|_{r'}$, implying that
$|S|=|H|$ and $\cd(S)\subseteq \cd(H)$.
\end{proof}


\section{Group pseudo-algebras of simple groups: Theorem
\ref{thm:main1}}\label{sec:theoremA}

In this section we prove Theorem \ref{thm:main1}, using the results
in the preceding sections.

Theorem \ref{thm-|S|divides|G|} is useful in proving results
concerning codegrees of finite simple groups. One of them is the
next theorem, whose proof makes use of the simple order theorem.
Recall that the simple order theorem  asserts that two
non-isomorphic finite simple groups have the same order if and only
if they are either $PSL_4(2)$ and $PSL_3(4)$ (of order $20,160$) or
$\Omega_{2n+1}(q)$ and $PSp_{2n}(q)$ (odd-dimensional orthogonal and
symplectic groups, of order $(1/2)q^{n^2}\prod_{i=1}^n (q^{2i}-1)$)
for some $n\geq 3$ and odd $q$ (see \cite[Thm. 5.1]{Kimmerle-et-al}
for instance). It was proved by E. Artin \cite{Artin551,Artin55} in
the fifties for known families of simple groups at the time, and
completed by J. Tits for the remaining families discovered later on
(see \cite{Kimmerle-et-al}). Artin's method is to consider certain
invariants associated to (orders of) simple groups that can be
computed explicitly and are able to distinguish the groups easily.
Therefore, the simple order theorem currently relies on the
classification of finite simple groups.

\begin{thm}\label{cor-cod(S)=cod(H)}
Suppose that $S$ and $H$ are finite simple groups such that
$\cod(S)= \cod(H)$. Then $S\cong H$.
\end{thm}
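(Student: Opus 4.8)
The plan is to combine Corollary~\ref{thm-|S|divides|G|2} with the simple order theorem to reduce the problem to the two exceptional families of equal-order pairs, and then rule those out by exhibiting a codegree of one group that cannot be a codegree of the other. First, by Corollary~\ref{thm-|S|divides|G|2} the hypothesis $\cod(S)=\cod(H)$ forces $|S|=|H|$. If $S$ and $H$ were abelian we would be done immediately (both would have to be $\Z/p$ for the same prime $p$ by Lemma~\ref{lem:1}(iv)), so we may assume both are nonabelian. By the simple order theorem, $S\not\cong H$ together with $|S|=|H|$ leaves exactly two possibilities: the pair $\{PSL_4(2), PSL_3(4)\}$, and the pair $\{\Omega_{2n+1}(q), PSp_{2n}(q)\}$ for some $n\geq 3$ and odd~$q$.

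For the first pair, the approach is to compute a small piece of the two character tables directly. Both groups have order $20160$, so we just need one codegree that appears in one group but not the other; character degrees (hence codegrees, dividing $|G|/\chi(1)$ with $G$ simple so kernels are trivial) are available from the ATLAS, and one can read off that $\cd(PSL_4(2))$ and $\cd(PSL_3(4))$ differ (e.g.\ $PSL_3(4)$ has an irreducible character of degree $64$ while $PSL_4(2)\cong A_8$ does not, or conversely pick a degree present in $A_8$ but absent in $PSL_3(4)$), so the corresponding codegrees $20160/\chi(1)$ differ and $\cod(S)\neq\cod(H)$, a contradiction.

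For the second pair, the plan is to use a defining-characteristic invariant rather than ad hoc table comparisons. Write $q$ a power of the odd prime $p$, so $|S|=|H|=\tfrac12 q^{n^2}\prod_{i=1}^n(q^{2i}-1)$ and $|S|_p=q^{n^2}$. By Lemma~\ref{lem-chi-p-power-2} (or directly by the argument in Lemma~\ref{lem-chi-p-power}, using existence of $p$-defect-zero characters in groups of Lie type in defining characteristic --- here both groups are of Lie type with the same defining characteristic $p$), if $\cod(\Omega_{2n+1}(q))=\cod(PSp_{2n}(q))$ then each group has an irreducible character of degree exactly $|G|_p = q^{n^2}$; in fact the Steinegg character realizes this. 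So this by itself does not separate them, and I must instead compare codegrees coming from $p'$-degree characters. The cleanest route: among the nontrivial codegrees of a simple group of Lie type over $\F_q$, the second largest (after the generic $|G|_{p'}=|G|/\St(1)$) is governed by the smallest nontrivial character degree $b$-complement, i.e.\ by $f(G)=|G|/b(G)$ and the next invariants; and the minimal degrees of $PSp_{2n}(q)$ and $\Omega_{2n+1}(q)$ are known and \emph{different} (e.g.\ $PSp_{2n}(q)$ has a Weil character of degree $(q^n-1)/2$ or $(q^n+1)/2$, while the smallest nontrivial degree of $\Omega_{2n+1}(q)$ is of a different, larger order of magnitude --- roughly $q^{2n-2}$). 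Hence the largest nontrivial codegrees other than $|G|_{p'}$ differ between the two groups, giving $\cod(S)\neq\cod(H)$.

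\textbf{Main obstacle.} The delicate part is the symplectic/orthogonal family: I need a clean, uniform invariant distinguishing $\cod(PSp_{2n}(q))$ from $\cod(\Omega_{2n+1}(q))$ for \emph{all} $n\geq 3$ and all odd $q$, and the natural candidate --- the largest nontrivial codegree $|G|_{p'}$ --- is the \emph{same} for both since their orders agree and their Steinberg degrees agree. So the argument must descend to the next layer of codegrees, which means I need the two smallest nontrivial irreducible character degrees (equivalently, the existence of low-degree characters of appropriate $p'$-part) for both families, quoted from the literature on minimal representation degrees (Tiep--Zalesskii and related work). Making this comparison airtight, rather than resorting to a case-by-case ATLAS check for small $n$ plus asymptotics for large $n$, is where the real work lies; the $PSL_4(2)$/$PSL_3(4)$ case is by contrast a finite, essentially trivial computation.
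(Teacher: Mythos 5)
Your plan follows the paper's own route almost step for step: reduce to $|S|=|H|$ via Corollary~\ref{thm-|S|divides|G|2}, invoke the simple order theorem, dispose of $\{PSL_4(2),PSL_3(4)\}$ from the ATLAS, and separate $\Omega_{2n+1}(q)$ from $PSp_{2n}(q)$ by exhibiting the Weil character degree $(q^n\pm1)/2$ of $PSp_{2n}(q)$ and comparing it against the Tiep--Zalesskii lower bound (roughly $q^{2n-2}$) for the minimal nontrivial degree of $\Omega_{2n+1}(q)$. This is exactly what the paper does (with $\cod(\chi)=|G|/\chi(1)$ for simple $G$, so equal codegree sets and equal orders give equal degree sets, and it suffices to find a degree of one group not a degree of the other).

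Two points need tightening. First, the phrase ``the second largest codegree (after the generic $|G|_{p'}$)'' is backwards: since $\St$ has the \emph{largest} degree, $|G|_{p'}=|G|/\St(1)$ is the \emph{smallest} nontrivial codegree, and the relevant comparison is with the \emph{largest} codegree, $|G|/d(G)$ where $d(G)$ is the minimal nontrivial degree. Second, and more substantively, you assert that $PSp_{2n}(q)$ has an irreducible character of degree $(q^n\pm1)/2$, but the Weil characters of that degree live on $Sp_{2n}(q)$, whose center has order $2$, and only the two of matching parity descend to $PSp_{2n}(q)$ (the others are faithful on the center). This is precisely why the paper fixes $\alpha\in\{\pm1\}$ with $4\mid(q^n-\alpha)$ and works through the Lusztig/Deligne--Lusztig parametrization and \cite[Lem.~4.4]{Navarro-Tiep13} to certify that $\chi_{(s)}$ of degree $(q^n+\alpha)/2$ is trivial on $\mathbf{Z}(Sp_{2n}(q))$. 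Without that verification the key inequality is being applied to a degree that may not actually belong to $\cd(PSp_{2n}(q))$; once you supply it (or cite it directly from the Tiep--Zalesskii minimal-degree tables for $PSp$ rather than $Sp$), the argument closes.
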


\begin{proof} The statement is trivial when both of $S$ and $H$ are abelian.
If one of the two groups is nonabelian, then, by Corollary
\ref{thm-char}, so is the other. So assume that both $S$ and $H$ are
nonabelian. By Theorem \ref{thm-|S|divides|G|2}, we have $|S|=|H|$
and hence it follows that $\cd(S)=\cd(H)$. Assume to the contrary
that $S$ is not isomorphic to $H$.

By the simple order theorem, we have
$$\{S,H\}=\{PSL_4(2),PSL_3(4)\}$$ or
$$\{S,H\}=\{\Omega_{2n+1}(q),PSp_{2n}(q)\}$$ for some odd prime power
$q=p^\ell$ and $n\geq 3$. The former case is eliminated using
\cite{Conway}, so we just need to show that
$\cd(\Omega_{2n+1}(q))\neq \cd(PSp_{2n}(q))$ for indicated $n$ and
$q$.

By Lusztig's classification of ordinary characters of finite groups
of Lie type (see \cite[Chapter 13]{DM}), irreducible characters of
$G:=Sp_{2n}(q)$ are parameterized by pairs $((s),\psi)$ where $(s)$
is the conjugacy class of a semisimple element $s\in
G^*:=SO_{2n+1}(q)$ and $\psi$ is a unipotent character of the
centralizer $C:=\mathbf{C}_{G^\ast}(s)$. Moreover, the degree of the
character $\chi_{((s),\psi)}$ associated to $((s),\psi)$ is
\[
\chi_{((s),\psi)}(1)=[G^\ast:C]_{p'}\psi(1).
\]
Let $\alpha\in\{\pm 1\}$ such that $4\mid (q^m-\alpha)$ and consider
a semisimple element $s\in G^*$ with spectrum $\{1,-1,...,-1\}$ such
that $C\cong GO_{2n}^\alpha$ (see \cite[Lem. 2.2]{Nguyen10}). Such
$s$ will then belong to $\Omega_{2n+1}(q)=[G^*,G^*]$, implying that
the semisimple character $\chi_{((s),\mathbf{1}_C)}$ associated to
the pair $((s),\mathbf{1}_C)$ is trivial on
$\mathbf{Z}(Sp_{2n}(q))$, by \cite[Lem. 4.4]{Navarro-Tiep13}. We
therefore have an irreducible character of $PSp_{2n}(q)$ of degree
\[\chi_{(s)}(1)=(|SO_{2n+1}(q)|/|GO_{2n}^\alpha|)_{p'}=(q^n+\alpha)/2.\]

To see that $\cd(PSp_{2n}(q)) \neq \cd(\Omega_{2n+1}(q))$ (for odd
$q$ and $n\geq 3$), it is enough to show that $(q^n+\alpha)/2$ is
not character degree of $\Omega_{2n+1}(q)$. By \cite[Thm.
6.1]{Tiep-Zalesskii96}, under our assumptions on $n$ and $q$ and the
additional condition $(n,q)\neq (3,3)$, the minimal (nontrivial)
irreducible character of $Spin_{2n+1}(q)$ has degree
\[
d(Spin_{2n+1}(q))=\left\{\begin{array}{ll} (q^{2n}-1)/(q^2-1)& \mathrm{ if }\ q\geq 5 \\
(q^n-1)(q^n-q)/2(q+1) & \mathrm{if}\ q=3,\end{array} \right.
\]
(For the definition  of classical groups of various isogeny types,
including the odd-dimensional spin groups, we refer the reader to
\cite[p. 40]{Carter85}.) Note that $Spin_{2n+1}(q)$ is a central
extension of $\Omega_{2n+1}(q)$ and so every character degree of
$\Omega_{2n+1}(q)$ is one of $Spin_{2n+1}(q)$. It is now easy to
check that $d(Spin_{2n+1}(q))>(q^n+\alpha)/2$ for $n\geq 3$. For the
remaining case $(n,q)=(3,3)$, we note that $d(Spin_{2n+1}(q))=27$,
which is still greater than $(q^n+\alpha)/2=13$.
\end{proof}

Certainly, one has to do much more work to relax the hypothesis in
Theorem \ref{cor-cod(S)=cod(H)} to $\cod(S)\subseteq \cod(H)$; that
is, to obtain Theorem \ref{thm:main4}.


\begin{lem}\label{lem:O2n+1andPSp}
Let $n\geq 3$ and $q$ be an odd prime power. Then
$\cd(\Omega_{2n+1}(q))\nsubseteq \cd(PSp_{2n}(q))$ and
$\cd(PSp_{2n}(q)) \nsubseteq \cd(\Omega_{2n+1}(q))$.
\end{lem}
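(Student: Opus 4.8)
The plan is to exploit the explicit character-degree information already assembled in the proof of Theorem~\ref{cor-cod(S)=cod(H)}, together with the minimal-degree results of Tiep--Zalesskii, to obtain both non-inclusions simultaneously. Throughout fix $n\geq 3$ and an odd prime power $q=p^\ell$, and write $A:=\Omega_{2n+1}(q)$ and $B:=PSp_{2n}(q)$. Recall $|A|=|B|$, so $A$ and $B$ have the same order and in particular the same set of prime divisors.

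First I would show $\cd(B)\nsubseteq\cd(A)$. The proof of Theorem~\ref{cor-cod(S)=cod(H)} already exhibits, for the appropriate $\alpha\in\{\pm1\}$ with $4\mid(q^n-\alpha)$, a semisimple character of $B$ of degree $(q^n+\alpha)/2$, and shows via \cite[Thm.~6.1]{Tiep-Zalesskii96} that the minimal nontrivial degree $d(Spin_{2n+1}(q))$ of the spin group (hence the minimal nontrivial degree of $A$, since every degree of $A$ is one of $Spin_{2n+1}(q)$) strictly exceeds $(q^n+\alpha)/2$ for all $n\geq 3$, including the exceptional case $(n,q)=(3,3)$. Since $(q^n+\alpha)/2>1$, this degree of $B$ cannot be a degree of $A$, giving $\cd(B)\nsubseteq\cd(A)$.

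For the reverse non-inclusion $\cd(A)\nsubseteq\cd(B)$ I would play the symmetric game: produce a ``small'' nontrivial character degree of $\Omega_{2n+1}(q)$ and show it is smaller than the minimal nontrivial degree of $PSp_{2n}(q)$, or otherwise not realized in $B$. Concretely, $A$ has an irreducible character of degree $(q^{2n}-1)/(q^2-1)$ (for $q\geq 5$) coming from the natural $(2n{+}1)$-dimensional orthogonal module realized in the Weil/unipotent character, while by \cite[Thm.~6.1]{Tiep-Zalesskii96} (or \cite{Tiep-Zalesskii96} combined with the known minimal-degree table for symplectic groups) the minimal nontrivial degree of $PSp_{2n}(q)$ is $(q^n-1)/2$ (for the relevant parities). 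One then checks the elementary inequality $(q^{2n}-1)/(q^2-1)<(q^n-1)/2$ fails in the wrong direction, so this particular degree argument must be arranged the other way: instead I would use the semisimple degree $(q^n-\alpha')/2$ available in $A$ (for a suitable $\alpha'$, via the same centralizer computation with $SO_{2n}^{\pm}$ inside $SO_{2n+1}$) and compare it against the minimal nontrivial degree of $B$. The key point is that the minimal faithful degrees of $Sp_{2n}(q)$ and $SO_{2n+1}(q)$ differ (one is roughly $q^n/2$, the other roughly $q^{2n}/q^2$), and the spectrum of small degrees below $q^{2n}/q^2$ is completely classified by Tiep--Zalesskii in both groups; matching these short lists directly forces a discrepancy. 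Concretely I would: (i) list all character degrees of $A$ below $d(Spin_{2n+1}(q))$ — by minimality there are none except $1$, so $A$ has a degree, namely $d(Spin_{2n+1}(q))=(q^{2n}-1)/(q^2-1)$ for $q\geq 5$, which is strictly between the minimal nontrivial degree of $B$ and the second-smallest degree of $B$; (ii) invoke the Tiep--Zalesskii classification of degrees of $B=PSp_{2n}(q)$ in that range to conclude $(q^{2n}-1)/(q^2-1)\notin\cd(B)$; (iii) handle $q=3$ and the finitely many small exceptions $(n,q)=(3,3)$ by the explicit values of $d(Spin_7(3))=27$ and the degree list of $PSp_6(3)$ from \cite{Conway}.

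The main obstacle is step (ii): pinning down that the specific ``gap degree'' $(q^{2n}-1)/(q^2-1)$ (respectively $27$ when $q=3$) of the orthogonal group genuinely fails to occur among the degrees of the symplectic group. This requires the fine structure of the low-dimensional irreducible representations of $PSp_{2n}(q)$ — precisely the content of \cite[Thm.~5.2, Thm.~6.1]{Tiep-Zalesskii96} — rather than a crude minimal-degree bound, since $(q^{2n}-1)/(q^2-1)$ is well above the minimal degree $(q^n-1)/2$ of $B$. I expect the cleanest route is to compare the second-smallest nontrivial degrees: the degree set of $PSp_{2n}(q)$ in the interval $\bigl((q^n-1)/2,\,q^{2n}/q^2\bigr)$ consists only of the two Weil-type degrees $(q^n\pm1)/2$ (for $n\geq 3$, $q$ odd), none of which equals $(q^{2n}-1)/(q^2-1)$; a short arithmetic check that $(q^n-1)/2<(q^{2n}-1)/(q^2-1)$ and $(q^n+1)/2\neq (q^{2n}-1)/(q^2-1)$ then finishes the argument, and the symmetric statement follows by the same interval comparison with the roles reversed.
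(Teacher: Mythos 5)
Your first direction ($\cd(PSp_{2n}(q))\nsubseteq\cd(\Omega_{2n+1}(q))$) is exactly the paper's and is fine. Your second direction has two genuine gaps.

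First, you never actually justify that $(q^{2n}-1)/(q^2-1)$ lies in $\cd(\Omega_{2n+1}(q))$ rather than merely in $\cd(Spin_{2n+1}(q))$. Tiep--Zalesskii give $d(Spin_{2n+1}(q))=(q^{2n}-1)/(q^2-1)$ for $q\geq 5$, but that is a statement about the \emph{universal cover}; the minimizing character need not be trivial on the center, and $\cd(\Omega_{2n+1}(q))\subseteq\cd(Spin_{2n+1}(q))$ runs in the wrong direction for your purposes. The paper fills this in by an explicit Lusztig-series computation: a semisimple element of $CSp_{2n}(q)$ with spectrum $\{-1,-1,1,\dots,1\}$ whose centralizer is $(Sp_2(q)\times Sp_{2n-2}(q))\cdot C_{q-1}$ produces characters of degrees $(q^{2n}-1)/(q^2-1)$ and $q(q^{2n}-1)/(q^2-1)$, and a lemma of Navarro--Tiep shows they restrict trivially to $\bZ(Spin_{2n+1}(q))$.

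Second, and more seriously, your chosen degree fails outright when $q=3$: the paper notes explicitly that $(q^{2n}-1)/(q^2-1)$ \emph{is} a character degree of $PSp_{2n}(3)$, so your argument collapses for that infinite family, not merely for the isolated case $(n,q)=(3,3)$. The reason the Tiep--Zalesskii range argument breaks down is that your degree equals the bound $(q^{2n}-1)/2(q+1)$ exactly when $q=3$, so it falls outside the classified interval. The paper handles $q=3$ (with $n\geq4$) by switching to the \emph{second} degree produced by the Lusztig-series argument, namely $q(q^{2n}-1)/(q^2-1)$, and comparing it against Nguyen's extended list of small degrees of $Sp_{2n}(3)$ (up to $(q^{2n}-1)(q^{n-1}-q)/2(q^2-1)$). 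A minor further inaccuracy: even for $q>3$, the degrees of $Sp_{2n}(q)$ in your interval include the minimal unipotent degree $(q^n-1)(q^n-q)/2(q+1)$, not just the two Weil degrees, so that comparison also needs to be added (though it causes no difficulty).
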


\begin{proof}
We have seen in the proof of Theorem \ref{cor-cod(S)=cod(H)} that
$PSp_{2n}(q)$ possesses an irreducible character of degree
$(q^n+\alpha)/2$, where $\alpha\in\{\pm 1\}$ such that $4 \mid
(q^n-\alpha)$, and furthermore $(q^n+\alpha)/2\notin
\cd(\Omega_{2n+1}(q))$. Therefore, it suffices to show
$cd(\Omega_{2n+1}(q))\nsubseteq \cd(PSp_{2n}(q))$.

We claim that both \[(q^{2n}-1)/(q^2-1) \text{ and }
q(q^{2n}-1)/(q^2-1)\] are elements of $\cd(\Omega_{2n+1}(q))$ for
$q$ odd. Let $G:=Spin_{2n+1}(q)$, the universal cover of
$\Omega_{2n+1}(q)$. The dual group $G^\ast$ of $G$ (in the sense of
\cite[Def. 13.10]{DM}) is the projective conformal symplectic group
$PCSp_{2n}(q)$, which is the quotient of $\widetilde{G}=CSp_{2n}(q)$
by its center $\bZ(\widetilde{G})\simeq C_{q-1}$. Consider a
semisimple element $s\in \widetilde{G}$ with spectrum
$Spec(s)=\{-1,-1,1,...,1\}$ and
\[\mathbf{C}_{\widetilde{G}}(s)\cong (Sp_{2}(q)\times
Sp_{2n-2}(q))\cdot C_{q-1}\] (see \cite[Lem. 2.4]{Nguyen10}). Let
$s^\ast$ be the image of $s$ under the natural homomorphism from
$\widetilde{G}$ to $G^\ast$. It is easy to see that, by the choice
of $s$, $\mathbf{C}_{\widetilde{G}}(s)$ is the complete inverse
image of $\mathbf{C}_{G^\ast}(s^\ast)$ under this homomorphism, and
thus
$\mathbf{C}_{G^\ast}(s^\ast)=\mathbf{C}_{\widetilde{G}}(s)/\mathbf{Z}(\widetilde{G})$
and
\[[G^\ast:\mathbf{C}_{G^\ast}(s^\ast)]_{p'}=[\widetilde{G}:\mathbf{C}_{\widetilde{G}}(s)]_{p'}
=\frac{|Sp_{2n}(q)|_{p'}}{|Sp_2(q)|_{p'}|Sp_{2n-2}(q)|_{p'}}=\frac{q^{2n}-1}{q^2-1},\]
where $p$ is the defining characteristic of $G$.

Consider the canonical homomorphism $f: Sp_2(q)\times
Sp_{2n-2}(q)\hookrightarrow \mathbf{C}_{\widetilde{G}}(s)
\rightarrow \mathbf{C}_{G^\ast}(s^\ast)$. Using \cite[Prop.
13.20]{DM}, we know that unipotent characters of $Sp_2(q)\times
Sp_{2n-2}(q)$ are of the form $\theta \circ f$ where $\theta$ runs
over the unipotent characters of $\mathbf{C}_{G^\ast}(s^\ast)$. In
particular, as $Sp_{2}(q)\cong SL_2(q)$ has unipotent characters of
degrees 1 and $q$, $\mathbf{C}_{G^\ast}(s^\ast)$ has two unipotent
characters of degree $1$ and $q$ as well. By the conclusion of the
previous paragraph, the Lusztig series $\mathcal{E}(G,(s^\ast))$ of
$G$ associated to the conjugacy class $(s^\ast)$ of $G^\ast$
contains two characters of degrees $(q^{2n}-1)/(q^2-1)$ and
$q(q^{2n}-1)/(q^2-1)$.

Note that $s^\ast \in PSp_{2n}(q)=[G^\ast,G^\ast]$ and
$|\mathbf{Z}(G)|=|G^\ast/(G^\ast)'|=2$, and therefore every
character in the Lusztig series $\mathcal{E}(G,(s^\ast))$ restricts
trivially to $\mathbf{Z}(G)$ (see \cite[Lem. 4.4]{Navarro-Tiep13}),
and so can be viewed as a character of
$G/\mathbf{Z}(G)=\Omega_{2n+1}(q)$. The claim is completely proved.

Suppose first that $q>3$ and assume to the contrary that
$\cd(\Omega_{2n+1}(q))\subseteq \cd(PSp_{2n}(q))$. Then we have
$(q^{2n}-1)/(q^2-1)\in \cd(PSp_{2n}(q))$, so that
$(q^{2n}-1)/(q^2-1)\in \cd(Sp_{2n}(q))$. Let
$\chi\in\irr(Sp_{2n}(q))$ such that $\chi(1)=(q^{2n}-1)/(q^2-1)$.
Now, as $q>3$, we have $(q^{2n}-1)/(q^2-1)<(q^{2n}-1)/2(q+1)$, and
therefore, by the classification of irreducible characters of
$Sp_{2n}(q)$ of degrees up to $(q^{2n}-1)2(q+1)$ (\cite[Thm.
5.2]{Tiep-Zalesskii96}), we deduce that $\chi$ must be one of the
Weyl characters of degree $(q^n\pm 1)/2$ or the minimal unipotent
one of degree $(q^n-1)(q^n-q)/2(q+1)$. A simple check reveals that
none of these degrees matches the degree of $\chi$.

Now we suppose $q=3$. (In such case, $(q^{2n}-1)/(q^2-1)$ is indeed
a character degree of $PSp_{2n}(q)$.) As the case of $\Omega_7(3)$
and $PSp_6(3)$ can be checked directly, we suppose furthermore that
$n\geq 4$. We then have
$q(q^{2n}-1)/(q^2-1)<(q^{2n}-1)(q^{n-1}-q)/2(q^2-1)$. Examining the
degrees up to $(q^{2n}-1)(q^{n-1}-q)/2(q^2-1)$ of irreducible
characters of $Sp_{2n}(q)$ available in \cite[Cor. 4.2]{Nguyen10},
we observe that none of them is equal to $q(q^{2n}-1)/(q^2-1)$. We
have shown that $q(q^{2n}-1)/(q^2-1)\notin \cd(Sp_{2n}(q))$, and
therefore, by the above claim, $\cd(\Omega_{2n+1}(q))\nsubseteq
\cd(PSp_{2n}(q))$, as desired.
\end{proof}

We can now prove our first main Theorem \ref{thm:main1}, which in
fact follows from the following slightly stronger result. If $G$ and
$H$ are groups we say that $C(G)\subseteq C(H)$ if
$\cod(G)\subseteq\cod(H)$ and $m_G(c)\leq m_H(c)$ for every
$c\in\cod(G)$.

\begin{thm}\label{thm:main1repeated}
Let $H$ be a  finite simple group and $G$ a nontrivial finite group
such that $C(G)\subseteq C(H)$. Then $G\cong H$.
\end{thm}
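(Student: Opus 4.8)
The plan is to split according to whether $H$ is abelian. If $H\cong C_p$ is abelian, then $\cod(H)=\{1,p\}$ with $m_H(p)=p-1$; since $\pi(G)=\pi(\cod(G))\subseteq\{p\}$, the group $G$ is a nontrivial $p$-group, and every nonprincipal $\chi\in\Irr(G)$ satisfies $\chi(1)<\cod(\chi)\le p$ with $\chi(1)$ a power of $p$, forcing $\chi(1)=1$. So $G$ is abelian and $|G|=|\Irr(G)|=\sum_c m_G(c)\le\sum_c m_H(c)=|H|=p$, whence $G\cong C_p\cong H$.

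Assume henceforth that $H$ is nonabelian simple. Since $H$ is perfect, Theorem~\ref{thm-char} shows it has no prime codegree; as $\cod(G)\subseteq\cod(H)$, neither does $G$, so Theorem~\ref{thm-char} again gives that $G$ is perfect. Let $N\nor G$ be a maximal normal subgroup; then $S:=G/N$ is simple and, $G$ being perfect, nonabelian. Inflating characters and using Lemma~\ref{lem:1}(ii), one gets $\cod(S)\subseteq\cod(G)\subseteq\cod(H)$ together with $m_S(c)\le m_G(c)\le m_H(c)$ for all $c\in\cod(S)$, i.e.\ $C(S)\subseteq C(H)$. By Theorem~\ref{thm-|S|divides|G|}, $x:=|H|/|S|$ is a positive integer, and by Lemma~\ref{lem-x=H/S}, $dx\in\cd(H)$ for every $1\ne d\in\cd(S)$. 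Since $S$ and $H$ are simple, a nonprincipal irreducible character has codegree equal to the group order over its degree, and as $|S|/d=|H|/(dx)$ the bound $m_S(|S|/d)\le m_H(|S|/d)$ says precisely that the number of degree-$d$ characters of $S$ is at most the number of degree-$dx$ characters of $H$. Summing squares of degrees over $1\ne d\in\cd(S)$ and using $\sum_{\chi\ne\mathbf 1}\chi(1)^2=|S|-1$ for $S$ (and the analogue for $H$), this yields $x^2(|S|-1)\le|H|-1=x|S|-1$, which forces $x=1$. With $x=1$ the same chain of inequalities collapses to a chain of equalities, so $\cd(S)=\cd(H)$ with matching multiplicities; together with $|S|=|H|$ this gives $\cod(S)=\cod(H)$.

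Theorem~\ref{cor-cod(S)=cod(H)} then gives $S\cong H$, i.e.\ $G/N\cong H$. Finally, from $C(G)\subseteq C(H)=C(G/N)$ we have $|\Irr(G)|=\sum_c m_G(c)\le\sum_c m_H(c)=|\Irr(G/N)|$, while inflation always embeds $\Irr(G/N)$ into $\Irr(G)$; hence $|\Irr(G)|=|\Irr(G/N)|$, so every $\chi\in\Irr(G)$ is trivial on $N$ and $N\le\bigcap_{\chi\in\Irr(G)}\ker\chi=1$. Therefore $N=1$ and $G\cong H$.

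The step I expect to be the main obstacle is showing $x=|H|/|S|=1$: this is the only point where Theorem~\ref{thm-|S|divides|G|}, Lemma~\ref{lem-x=H/S}, and — essentially — the multiplicity half of the hypothesis $C(G)\subseteq C(H)$ must all be used together, through the degree-square identity for the simple group $S$. Once $|S|=|H|$ is in hand, reducing to $\cod(S)=\cod(H)$ in order to invoke Theorem~\ref{cor-cod(S)=cod(H)}, and the concluding descent $N=1$, should be routine; in particular I do not expect the $\Omega_{2n+1}(q)$ versus $PSp_{2n}(q)$ casework of the simple order theorem to reappear, since it is already absorbed into Theorem~\ref{cor-cod(S)=cod(H)}.
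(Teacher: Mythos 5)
Your proof is correct and follows the paper's strategy closely: reduce to the abelian case or, in the nonabelian case, to a maximal-normal-subgroup quotient $S=G/N$, then exploit $C(S)\subseteq C(H)$ via the sum-of-squares-of-degrees identity and Theorem~\ref{thm-|S|divides|G|}. The one genuine (and worthwhile) difference is the endgame: the paper, having only $\cod(S)\subseteq\cod(H)$ with $|S|=|H|$, invokes the simple order theorem again and separately disposes of the pairs $\{PSL_4(2),PSL_3(4)\}$ and $\{\Omega_{2n+1}(q),PSp_{2n}(q)\}$ (the latter via Lemma~\ref{lem:O2n+1andPSp}), whereas you observe that once $x=1$ the chain of inequalities collapses to equalities, forcing $\cod(S)=\cod(H)$ outright, so that Theorem~\ref{cor-cod(S)=cod(H)} applies directly and the casework is absorbed rather than repeated. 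This is a modest but real streamlining. Two minor remarks: in the abelian case you can shorten your argument by simply noting $\cod(G)\subseteq\{1,p\}$ and citing \cite[Lem.~2.4]{dl} as the paper does; and Lemma~\ref{lem-x=H/S} is not strictly needed in your multiplicity argument, since $m_S(|S|/d)\leq m_H(|S|/d)$ already forces $H$ to have characters of degree $dx$, though citing it does no harm.
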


\begin{proof}
Suppose first that $H$ is abelian of prime order $p$. Then
$\cod(G)\subseteq\cod(H)=\{1,p\}$. Therefore, by Lemma 2.4 of \cite{dl}, $G$ is an elementary
abelian $p$-group and since $k(G)\leq p$, we conclude that $G$ is
cyclic of order $p$, as wanted.


So we may assume that $H$ is nonabelian. By Corollary \ref{thm-char}
and the assumption $C(G)\subseteq C(H)$, we have that $G$ is
perfect. Let $N\nor G$ such that $S:=G/N$ is nonabelian simple. Now
$\cod(S)\subseteq \cod(G)\subseteq \cod(H)$ and therefore, by
Theorem \ref{thm-|S|divides|G|}, we have $|S|$ divides $|H|$.

Note that $C(S)\subseteq C(G)\subseteq C(H)$. Therefore there exists a subset
$I\subseteq \irr(H)\backslash \{\mathbf{1}_H\}$ and a bijection
$f:\irr(S)\backslash \{\mathbf{1}_S\} \rightarrow I$ such that
\[
\frac{|S|}{\chi(1)}=\frac{|H|}{f(\chi)(1)}
\]
for every $\chi\in \irr(S)\backslash \{\mathbf{1}_S\}$. It follows
that
\[
\sum_{\chi\in \irr(S)\backslash \{\mathbf{1}_S\}}
\frac{\chi(1)^2}{|S|^2} = \sum_{\psi\in I} \frac{\psi(1)^2}{|H|^2},
\]
and thus
\[
\frac{|S|-1}{|S|^2}\leq \frac{|H|-1}{|H|^2}.
\]
As the function $(x-1)/x^2$ is decreasing on $[2,\infty)$, we deduce
that $|S|\geq |H|$.

The conclusions of the last two paragraphs show that $|S|=|H|$. If
$S\cong H$ then $C(G/N)=C(S)=C(H)\supseteq C(G)$ and so $G/N$ and $G$ have
the same number of conjugacy classes, which is possible only when
$N=1$, and we are done.

So assume by contradiction that $S\ncong H$. Using again the simple
order theorem, we have $\{S,H\}=\{PSL_4(2),PSL_3(4)\}$ or
$\{S,H\}=\{\Omega_{2n+1}(q), PSp_{2n}(q)\}$ for some $n\geq 3$ and
odd $q$. For the former pair, using the character tables of both
$PSL_4(2)$ and $PSL_3(4)$ available in \cite{Conway}, one observes
that $7\in\cd(PSL_4(2)) \backslash \cd(PSL_3(4))$ and $63\in
\cd(PSL_3(4)) \backslash \cd(PSL_4(2))$, implying that none of
$\cod(PSL_3(4))$ and $\cod(PSL_2(4))$ contains the other, and this
violates the fact that $\cod(S)\subseteq \cod(H)$. The latter pair
was already handled in Lemma \ref{lem:O2n+1andPSp}.
\end{proof}


\section{The largest character degree of finite simple
groups}\label{sec:largest-degree}

Let $b(G)$ denote the largest degree of an irreducible character of
a finite group $G$. Recall from the Introduction that if $S$ is a
simple group, then $f(S):=|S|/b(S)$ is the smallest nontrivial
character codegree of $S $. The following elementary fact explains
the relevance of $f(S)$, and therefore $b(S)$.

\begin{lem}\label{lem:fS geq fH}
Let $S$ and $H$ be finite simple groups such that $\cod(S)\subseteq
\cod(H)$. Then $f(S)\geq f(H)$.
\end{lem}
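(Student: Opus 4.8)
The statement to prove is that if $S$ and $H$ are finite simple groups with $\cod(S)\subseteq\cod(H)$, then $f(S)\geq f(H)$, where $f(S)=|S|/b(S)$ is the smallest nontrivial codegree of $S$. The key observation is that $f(S)$ has an intrinsic description purely in terms of $\cod(S)$: since $\cod(\chi)>\chi(1)$ for nonprincipal $\chi$ (Lemma \ref{lem:1}(i)), and since $|S|/\chi(1)$ is a codegree for every $\chi\in\irr(S)$ (as $S$ is simple, every nontrivial character is faithful), the codegree $|S|/b(S)$ is attained and is the smallest among the codegrees $|S|/\chi(1)$. Moreover \emph{every} nontrivial codegree of a simple group $S$ has the form $|S|/\chi(1)$ for some $\chi\in\irr(S)$, so $f(S)=\min(\cod(S)\setminus\{1\})$.

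First I would record this reduction explicitly: for a nonabelian simple group $S$, $f(S)$ equals the smallest element of $\cod(S)$ exceeding $1$. (The abelian case $S\cong C_p$ is trivial, or can be excluded since then $f(S)=p$ and $\cod(S)=\{1,p\}$, forcing $p\in\cod(H)$, hence $p\mid f(H)$ or more directly the inclusion analysis is immediate; in any case $b(S)=1$ so $f(S)=|S|=p$ and one checks $f(H)\le p$.) Then the argument is immediate: the hypothesis $\cod(S)\subseteq\cod(H)$ gives $\cod(S)\setminus\{1\}\subseteq\cod(H)\setminus\{1\}$, so the minimum of the smaller set is at least the minimum of the larger set, i.e.\ $f(S)=\min(\cod(S)\setminus\{1\})\geq\min(\cod(H)\setminus\{1\})=f(H)$.

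The only point requiring a sentence of justification is why the smallest nontrivial codegree of a simple group $S$ equals $|S|/b(S)$ rather than something smaller coming from a character of smaller degree but with nontrivial kernel --- but simplicity of $S$ means no nonprincipal character has nontrivial kernel, so every nontrivial codegree is exactly $|S|/\chi(1)$, and this is minimized precisely when $\chi(1)$ is maximized. There is essentially no obstacle here; the lemma is elementary once one unwinds the definition of $f$. The slight care needed is just to handle the degenerate case where $S$ or $H$ is of prime order, though since the ambient paper frequently assumes simple groups are nonabelian in nearby lemmas, I would state and prove it for nonabelian $S$ and $H$ and note the abelian case separately if needed. I expect the author's proof to be a two-line version of exactly this.
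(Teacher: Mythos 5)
Your proposal is correct and matches the paper's proof: both arguments hinge on the observation that for a simple group $X$, every nontrivial codegree equals $|X|/\chi(1)$ for some nontrivial $\chi$, so $f(X)=|X|/b(X)$ is precisely the smallest nontrivial member of $\cod(X)$, and the inclusion $\cod(S)\subseteq\cod(H)$ then forces $f(S)\geq f(H)$. Your extra care about the abelian case is harmless but unnecessary, since the reduction $f(X)=\min(\cod(X)\setminus\{1\})$ holds verbatim when $X$ is cyclic of prime order.
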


\begin{proof}
The hypothesis implies that $f(S)\in\cod(S)\subseteq\cod(H)$. Since $f(H)$ is the smallest nontrivial member of $\cod(H)$, it follows that $f(S)\geq
f(H)$.
\end{proof}

Under the hypothesis of Lemma \ref{lem:fS geq fH}, we showed in
Theorem \ref{thm-|S|divides|G|} that $|S|$ divides $|H|$. We will see in
later sections that, in many cases, the two conditions $f(S)\geq
f(H)$ and $|S|$ divides $|H|$ are enough to force $S\cong H$, as
stated in Theorem \ref{thm:main4}.

Browsing through character tables of small-order simple groups in
\cite{Conway}, one notices that if $|H|$ is a multiple of $|S|$ and $|H|>|S|$, then $b(H)>b(S)$. However, it seems that the largest
character degree grows slower than the order -- that is,
$f(H)>f(S)$. This is not so easy to prove generally, but we do
confirm it in several cases, particularly when either $S$ or $H$ is
an alternating group (see Sections \ref{sec:theoremD-mixed-case} and
\ref{sec:theoremD-alternating-sporadic}).

We shall need effective (both lower and upper) bounds for the
largest degree of an irreducible character of simple groups. For
symmetric groups, asymptotic and explicit bounds were obtained by
A.\,M. Vershik and S.\,V. Kerov in \cite{VK85} which can be used to
derive the corresponding bounds for alternating groups. For a group
$S$ of Lie type in characteristic $p$, an obvious (and in fact very
tight!) lower bound for $b(S)$ is the degree $\St_S(1)=|S|_p$ of the
Steinberg character $\St_S$. When $S$ is of classical type, explicit
upper bounds have been worked out by M. Larsen, G. Malle, and P.\,H.
Tiep in \cite{LMT13}. Unfortunately, upper bounds for exceptional
groups achieved in \cite{LMT13} are only asymptotic and its proof
does not allow one to obtain an explicit bound. We obtain Theorem
\ref{lem:exceptional-b(S)} below that we believe will be useful in
other applications.

\begin{lem}\label{lem:order-bound}
Let $\bG$ be a simple algebraic group over the algebraic closure of
a finite field of order $q$ in characteristic $p$, $F:
\bG\rightarrow \bG$ a Steinberg endomorphism, and $G:= \bG^F$ be the
corresponding finite group of Lie type. Let $r$ be the rank of
$\bG$. Then
\[
(q-1)^r\cdot|G|_p\leq |G|_{p'}\leq q^r\cdot|G|_p.
\]
\end{lem}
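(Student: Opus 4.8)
The plan is to use the standard product formula for the order of a finite group of Lie type $G = \bG^F$. Writing $|G| = q^N \prod_{i=1}^r (q^{d_i} - \varepsilon_i)$ (possibly with $q$ replaced by an appropriate power or root in the twisted cases, and $\varepsilon_i$ a root of unity), where $N$ is the number of positive roots and the $d_i$ are the degrees of the invariants of the Weyl group (or their twisted analogues), one has $|G|_p = q^N = \St_G(1)$. Hence $|G|_{p'} = \prod_{i=1}^r (q^{d_i} - \varepsilon_i)$, a product of exactly $r$ cyclotomic-type factors, one for each of the $r$ degrees. The desired inequality is then equivalent to
\[
(q-1)^r \leq \prod_{i=1}^r (q^{d_i} - \varepsilon_i) \leq q^r .
\]
Wait — that upper bound is false as stated since each factor $q^{d_i}-\varepsilon_i$ is typically much larger than $q$. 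So the correct reading must be that the statement is comparing $|G|_{p'}$ against $|G|_p = q^N$ times a factor, i.e. one should interpret it as: each factor $q^{d_i}-\varepsilon_i$ lies between $(q-1)^{d_i}$-ish and $q^{d_i}$-ish, and summing exponents $\sum d_i$ relates to $N$ plus $r$. Let me restate: the cleanest route is to bound each factor individually as $(q-1) q^{d_i - 1} \le q^{d_i} - 1 \le q^{d_i} - \varepsilon_i \le q^{d_i}$ would still not give the claimed clean bound, so instead I would bound $q^{d_i} - \varepsilon_i$ between $(q-1) \cdot (\text{something})$ and $q \cdot (\text{something})$ in a way that telescopes with $|G|_p$.

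Concretely, the key numerical identity I would invoke is $N + r = \sum_{i=1}^r d_i$ (for untwisted groups: the sum of the fundamental degrees equals the number of positive roots plus the rank; this is the classical fact $\sum d_i = N + r$, equivalently $\sum (d_i - 1) = N$). Therefore $|G|_p = q^N = \prod_{i=1}^r q^{d_i - 1}$, and
\[
\frac{|G|_{p'}}{|G|_p} = \prod_{i=1}^r \frac{q^{d_i} - \varepsilon_i}{q^{d_i - 1}} = \prod_{i=1}^r \left( q - \frac{\varepsilon_i}{q^{d_i-1}} \right).
\]
Since $|\varepsilon_i| = 1$ and $d_i \ge 1$, each factor lies in the interval $[q - 1, q + 1]$, and in fact when $\varepsilon_i = 1$ and $d_i \ge 2$ it is strictly between $q-1$ and $q$; the only potentially troublesome factors are those with $d_i = 1$ (giving exactly $q - 1$ or $q+1$) or $\varepsilon_i = -1$. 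I would check that in every case the product still satisfies $(q-1)^r \le |G|_{p'}/|G|_p \le q^r$ — for the upper bound one must handle the factors with $\varepsilon_i = -1$ or $d_i = 1$ with $\varepsilon_i=1$ carefully, pairing a factor slightly above $q$ with one safely below $q$, which works because the relevant degrees/signs of the Weyl group invariants always come in compatible patterns (e.g. for type $A_r$ the degrees are $2, 3, \dots, r+1$ all with $\varepsilon_i = 1$; for the classical types the $-1$'s are balanced by large $d_i$).

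The twisted groups ${}^2A_n, {}^2D_n, {}^3D_4, {}^2B_2, {}^2G_2, {}^2F_4, {}^2E_6$ require the analogous order formula with the appropriate $\varepsilon_i \in \{1, -1, \omega, \dots\}$ (roots of unity) and, for the very twisted Suzuki/Ree groups, half-integer exponents in $q$; the same estimate $|q - \varepsilon_i/q^{d_i - 1}| \in [q-1, q+1]$ still holds, and one re-derives $N + r = \sum (\text{exponents})$ from the known order formulas. I expect the main obstacle to be the upper bound $|G|_{p'} \le q^r |G|_p$: it is not simply "each factor $\le q$" because of the $\varepsilon_i = -1$ (and complex root of unity) cases, so one has to argue globally that the product of all $r$ factors $q - \varepsilon_i q^{-(d_i-1)}$ does not exceed $q^r$, i.e. that $\sum_i \log(1 - \varepsilon_i q^{-d_i}) \cdot(\text{correction}) \le 0$ — this amounts to checking that the "positive" contributions (from $\varepsilon_i = -1$, where $d_i \ge 2$ always in these groups) are outweighed, which can be done uniformly since $q \ge 2$ and the number and size of such factors is controlled by the Weyl group data. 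A short case check over the finitely many types (classical series with general $r$, plus the handful of exceptional types) completes the argument; the lower bound $(q-1)^r \le |G|_{p'}/|G|_p$ is the easy direction since each factor is at least $q - 1$.
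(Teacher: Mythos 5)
Your proposal takes the same route the paper gestures at: the paper's proof is a one-liner ("the inequalities are straightforward to verify using the order formulas"), and what you have written is precisely that verification, spelled out. The key input you identify — the product formula $|G| = q^N\prod_{i=1}^r(q^{d_i}-\varepsilon_i)$ together with the identity $\sum_i d_i = N + r$ (i.e.\ $N = \sum(d_i-1)$, the sum of the exponents of the Weyl group) — is exactly the combinatorial fact that makes the lemma "straightforward", reducing it to showing $\prod_i\bigl(q - \varepsilon_i q^{-(d_i-1)}\bigr)$ lies between $(q-1)^r$ and $q^r$. The lower bound is immediate since every factor has modulus $\ge q - 1/q^{d_i-1}\ge q-1$, and you are right that the upper bound needs a small global argument for the twisted types: the trick is that every simple Weyl group has a degree-$2$ invariant with $\varepsilon = 1$, giving a safely small factor $q - 1/q$, and the remaining factors come in conjugate or complementary pairs whose products are $< q^2$ (e.g.\ $(q - q^{-(2k-1)})(q + q^{-2k}) < q^2$ in type ${}^2A_n$, and for ${}^3D_4$ one pairs the two $\omega$-twisted degree-$4$ factors so that $(q^4-\omega)(q^4-\omega^2)/q^6 \cdot (q - 1/q) < q^3$). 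You should carry out this pairing uniformly (or by the short finite case check you mention) rather than leave it as an expectation, but the strategy is exactly what the paper intends. One caveat worth recording: for the Suzuki and Ree groups the "$q$" appearing here must be read as the common absolute value of the $F$-eigenvalues on the character lattice (a possibly non-integral power of $p$, as in the paper's later usage in Theorem~\ref{lem:exceptional-b(S)}), not literally the size of the field of definition; with the field size the lower bound $(q-1)^r|G|_p \le |G|_{p'}$ is false already for $Sz(8)$ and ${}^2G_2(27)$.
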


\begin{proof}
Note that finite groups $\bG^F$ of the same isogeny type have the
same order, so we may work with $\bG$ being of simply-connected
type. The inequalities are then straightforward to verify using the
order formulas for finite groups of Lie type available in \cite[p.
xvi]{Conway}.
\end{proof}

\begin{thm}\label{lem:exceptional-b(S)}
Let $S$ be a simple exceptional group of Lie type defined over a
field of order $q$ in characteristic $p$. Then the following hold:
\begin{enumerate}[\rm(i)]
\item $b(S)<256|S|_p$.

\item If $q>2$, then $b(S)<26|S|_p$.
\end{enumerate}
\end{thm}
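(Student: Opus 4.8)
The plan is to bound $b(S)$ from above by a small constant times $|S|_p = \St_S(1)$, using the well-known fact that the Steinberg degree is essentially the largest one and that the character degrees of an exceptional group of Lie type are given by the Lusztig parametrization as $[G^*:C]_{p'}\cdot\psi(1)$ for a semisimple element $s$ with centralizer $C$ and a unipotent character $\psi$ of $C$. Since this is a finite list of types ($G_2$, $F_4$, $E_6$, ${}^2E_6$, $E_7$, $E_8$, ${}^3D_4$, ${}^2G_2$, ${}^2F_4$, ${}^2B_2$), I would handle each type separately, but in a uniform-looking way. For each exceptional type, the crude estimate is $b(S) \le b(G) \le |G|_{p'}\cdot \max_C \max_{\psi\in\mathrm{Unip}(C)}\psi(1)$ where the maximum over $C$ ranges over (the finitely many) Levi-type or reductive centralizers of semisimple elements; but this is far too lossy. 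Instead I would use the sharper observation that $b(G) \le |G|_{p'}$ is itself too weak, and that in fact $b(S)$ is bounded by $|S|_p$ times a constant coming from the "defect" of the largest-degree character. The cleanest route: invoke that every $\chi(1)$ divides $|G|$ and write $\chi(1) = |G|_p \cdot (\chi(1))_{p'} \cdot (\text{a proper $p$-power factor is impossible})$ — more carefully, $|G|_p = \St_S(1)$ is the exact $p$-part of $|G|$, so $\chi(1)_p \le |S|_p$, and thus $b(S) \le |S|_p \cdot \max_{\chi}\chi(1)_{p'}$. So it suffices to bound $\chi(1)_{p'}$ by the stated constants.

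To bound $\chi(1)_{p'}$, I would use Lemma~\ref{lem:order-bound}: for a group $G = \bG^F$ of rank $r$, one has $|G|_{p'} \le q^r |G|_p$, hence $\chi(1)_{p'} \le |G|_{p'}/|G|_p \cdot (\text{relative factor})$. More precisely, $\chi(1)_{p'}$ divides $|G|_{p'}$, and by the generic degree formula $\chi(1)_{p'} = [G^*:C]_{p'}\cdot\psi(1)_{p'}$. Here $[G^*:C]_{p'} \le |G|_{p'}/|C|_{p'}$, and applying Lemma~\ref{lem:order-bound} to both $G$ (rank $r$) and to $C$ (a reductive group of the same rank $r$, with $p$-part $|C|_p$) gives $[G^*:C]_{p'} \le q^r |G|_p / ((q-1)^r |C|_p) \le \bigl(q/(q-1)\bigr)^r \cdot |G|_p/|C|_p$; combined with $\psi(1) \le |C|_p$ (again Steinberg is the biggest unipotent degree, or rather $\psi(1)_{p'}\le |C|_{p'}/|C|_p$... — one must be slightly careful and use $\psi(1) \le |C|_{p'}$ crudely, then re-balance). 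Pushing this through, $\chi(1)_{p'} \le \bigl(q/(q-1)\bigr)^r |G|_p$, so $b(S) \le \bigl(q/(q-1)\bigr)^{2r}|S|_p$ roughly; for $q\ge 2$ and $r \le 8$ this gives $b(S) < 2^{16}|S|_p$ — too big, so the crude $\psi(1)\le|C|_{p'}$ step must be replaced. The correct fix is to bound $\max_\psi \psi(1)_{p'}$ over unipotent characters of the (finitely many) possible $C$'s by an explicit small constant: for each exceptional type the reductive centralizers $C$ of semisimple elements are classified, their unipotent character degrees are tabulated (e.g. in Carter \cite{Carter85} or Lübeck's data), and one reads off $\max \psi(1)_{p'}$, which turns out to be a small number (at most a few hundred, and much smaller when $q>2$). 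For (ii), when $q>2$ the factor $(q/(q-1))^{2r} \le (3/2)^{16} < 700$ is still too large, so there too one needs the explicit tabulated bound on $\psi(1)_{p'}$ rather than $(q-1)^r \le |C|_p$ slack.

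The honest approach I would actually carry out: cite the result of Larsen–Malle–Tiep \cite{LMT13} that $b(S)/|S|_p$ is bounded (asymptotically), then, since there are only ten families and the dependence on $q$ is through $\prod(q^{d_i}-1)/\prod$ (cyclotomic-type) factors that are increasing in $q$ toward $q^{|\Phi^+|}=|S|_p$-comparable limits, reduce to a \emph{finite} check: for each exceptional type, the worst case of $b(S)/|S|_p$ over all $q$ is attained at $q=2$ (or the smallest admissible $q$), and at $q=2$ one has $|S|$ concretely, so one computes $b(S)$ from the known character table / Lübeck's tables and divides. This yields constants like $b({}^2F_4(2)')/|{}^2F_4(2)'|_2$, $b(E_8(2))/|E_8(2)|_2$, etc., and the maximum over all these (the binding case being small $q$, likely $E_7(2)$ or $E_8(2)$ or a twisted small group) is the $256$ in (i); for $q>2$ the smallest admissible values $q=3,4$ give the weaker bound $26$ in (ii). Monotonicity in $q$ is the key lemma making the finite reduction valid: I would prove $b(S_q)/|S_q|_p$ is nonincreasing in $q$ by writing $b(S)$ via the generic degree as a product of factors $(q^{a}-\varepsilon)/(q^{b}-\varepsilon')$ times $|S|_p$-type pieces and checking each ratio $\bigl(\text{generic degree}\bigr)/|S|_p$ is a decreasing rational function of $q$ on $q\ge 2$.

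\medskip

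\textbf{Where the difficulty lies.} The main obstacle is not the structure of the argument but the bookkeeping: one must identify, for each exceptional type, which semisimple class $(s)$ and which unipotent character $\psi$ of $C = \bC_{G^*}(s)$ produce the largest degree, and verify the constant. For most types the maximum of $\chi(1)/|S|_p$ is a \emph{unipotent} character of $G$ itself (so $s=1$, $C=G^*$, $\psi$ the largest unipotent character — often not Steinberg but one of the large cuspidal unipotent characters in $E_7$, $E_8$), and one needs the exact largest unipotent degree, which is available but type-by-type. The potentially delicate point is ruling out that some \emph{non-unipotent} character in a small-centralizer Lusztig series beats it; this requires knowing $\max_\psi \psi(1)$ over unipotent characters of all reductive subgroups $C$ and comparing $[G^*:C]_{p'}\psi(1)$ against the unipotent bound — a finite but non-trivial verification. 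I expect this to be the bulk of the work, with the monotonicity-in-$q$ reduction and the final numerical maximization being comparatively routine.
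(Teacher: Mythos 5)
Your first route is in fact the paper's route, almost exactly — but you talk yourself out of it. The paper bounds $b(G)$ via Lusztig's degree formula $\chi(1)=\bigl(|G|_{p'}/|\bC_{G^*}(s)|_{p'}\bigr)\psi(1)$, applies the order bound (Lemma~\ref{lem:order-bound}) to $G$ for the upper estimate $|G|_{p'}\le q^r|G|_p$, applies it \emph{factor by factor} to the semisimple part of $C=\bC_{G^*}(s)=\mathbf S\mathbf T$ (so $|L_{r_i}(q^{a_i})|_{p'}\ge (q-1)^{a_ir_i}|L_{r_i}(q^{a_i})|_p$ and $\Phi_j(q)\ge(q-1)^{\deg\Phi_j}$), and — crucially — uses the Steinberg maximality theorem \cite[Thm.\ 3.1]{LMT13} to get $\psi(1)\le |C|_p$, \emph{not} $\psi(1)\le|C|_{p'}$. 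Combined with the maximal-rank identity $\sum a_ir_i+\sum\deg\Phi_j=r$, this cancels $|C|_p$ exactly and yields $b(G)\le |G|_{p'}/(q-1)^r\le (q/(q-1))^r|G|_p$, so for $r\le 8$ one gets $2^8=256$ and $(3/2)^8<26$. You wrote down $\psi(1)\le |C|_p$ in a parenthetical and then hedged (``one must be slightly careful and use $\psi(1)\le|C|_{p'}$ crudely''), which is exactly the wrong move: that slack is where your exponent doubled to $2r$ and blew up to $2^{16}$. Had you kept $\psi(1)\le|C|_p$ and tracked the bookkeeping cleanly, your first paragraph would give precisely the stated constants with no case-by-case tabulation.

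Two further points. First, you propose applying Lemma~\ref{lem:order-bound} directly to $C$ ``a reductive group of the same rank $r$'' — but that lemma is stated for simple algebraic groups, and $C$ is a product of simple factors (possibly over extension fields) times a central torus; you must apply it to each simple factor and bound the torus order separately, then invoke the maximal-rank identity to reassemble. The paper does exactly this, and it is not optional. Second, your fallback plans (tabulating $\max_\psi\psi(1)_{p'}$ over all reductive centralizers, or reducing to a finite check at $q=2,3$ via a monotonicity-in-$q$ lemma) are genuinely different and considerably heavier than what the paper needs; the monotonicity claim in particular is not obviously true and is never used. The moral is that the rank-$\le 8$ bound plus Steinberg dominance plus the maximal-rank identity is already enough to give an explicit constant uniformly across all exceptional types, with no tables and no per-type analysis.
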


\begin{proof}
The Tits group can be verified easily, so we assume that $S\neq
{}^2F_4(2)'$. We then may find a simple algebraic group $\mathbf{G}$
of adjoint type and a Steinberg endomorphism
$F:\mathbf{G}\rightarrow \mathbf{G}$ such that $S=[G,G]$ where
$G:=\mathbf{G}^F$ (see \cite[Prop. 24.21]{malletesterman}). Clearly
it suffices to show that $b(G)<256 \St_G(1)$.

Let $(\mathbf{G}^\ast,F^\ast)$ be the dual pair of $(\mathbf{G},F)$,
so $\mathbf{G}^\ast$ will be the corresponding simple algebraic
group of simply connected type, and set
$G^\ast:=(\bG^\ast)^{F^\ast}$. As mentioned before, Lusztig's
classification on complex characters of finite groups of Lie type
implies that the set of irreducible complex characters of $G$ is
partitioned into Lusztig series $\mathcal{E}(G,(s))$ associated to
various conjugacy classes $(s)$ of semisimple elements of $G^\ast$.
Furthermore, there is a bijection $\chi\mapsto\psi$ from
$\mathcal{E}(G,(s))$ to $\mathcal{E}(\bC_{G^\ast}(s),(1))$ such that
\begin{equation}\label{lusztig}
\chi(1)=\frac{|G|_{p'}}{|\bC_{G^\ast}(s)|_{p'}}\psi(1).
\end{equation}

The detailed structure of centralizers of semisimple elements in a
finite exceptional groups of Lie type was determined by Carter
\cite{Carter78}, Deriziotis \cite{Deriziotis}, and
Deriziotis-Liebeck \cite{Deriziotis-Liebeck}. A well-known result of
Steinberg states that the centralizer $\bC_{\bG^\ast} (s)$ of a
semisimple element $s$ is a connected reductive subgroup of maximal
rank in $\bG^\ast$. Such connected subgroup has a decomposition
$\bC_{\bG^\ast} (s)=\mathbf{S}\mathbf{T}$, where $\mathbf{S}$ is a
semisimple subgroup, $\mathbf{T}$ is a central torus,
$\mathbf{S}\cap \mathbf{T}$ is finite, and
$|(\bC_{G^\ast}(s))^{F^\ast}|=|\mathbf{S}^\ast||\mathbf{T}^\ast|$
(see \cite[p. 48]{Deriziotis-Liebeck}). When $s$ is in $G^\ast$, the
centralizer $\bC_{\bG^\ast} (s)$ is $F^\ast$-stable and
$\bC_{G^\ast}(s)=(\bC_{\bG^\ast} (s))^{F^\ast}$; and so
\[|\bC_{G^\ast}(s)|=|S||T|\] where $S:=\mathbf{S}^{F^\ast}$ and
$T:=\mathbf{T}^{F^\ast}$. Let $r$ be the semisimple rank of
$\bG^\ast$ and $q$ (that will be a power of $p$) the absolute value
of all eigenvalues of $F$ on the character group of an $F$-stable
maximal torus of $\bG$. Possible values for $|S|$ and $|T|$ are
available in \cite{Deriziotis,Deriziotis-Liebeck}. In particular, we
have
\[
|S|=\prod_i |L_{r_i}(q^{a_i})|
\]
and
\[
|T|=\prod_j \Phi_j(q),
\]
where $L_{r_i}(q^{a_i})$s are finite groups of Lie type (of
simply-connected type) of rank $r_i$ defined over a field of order
$q^{a_i}$ and $\Phi_j(q)$s are cyclotomic polynomials (and also
polynomials of the forms $q^2\pm \sqrt{2}q+1$, $q^2\pm \sqrt{3}q+1$,
or $q^4\pm \sqrt{2}q^3+q^2\pm \sqrt{2}q+1$ for Suzuki and Ree
groups) evaluated at $q$. As $\bC_{\bG^\ast} (s)$ has maximal rank,
we furthermore have
\begin{equation}\label{eq2}
\sum_i a_ir_i + \sum_j \deg(\Phi_j)=r.
\end{equation}

Now formula (\ref{lusztig}) implies that the typical degree of an
irreducible character of $G$ is of the form
\[
\chi(1)=\frac{|G|_{p'}}{\prod_i |L_{r_i}(q^{a_i})|_{p'} \prod_j
\Phi_j(q)} \psi(1),
\]
where $\psi\in \mathcal{E}(\bC_{G^\ast}(s),(1))$, a unipotent
character of $\bC_{G^\ast}(s)$. By \cite[Thm. 3.1]{LMT13}, for any
finite group of Lie type $\mathbf{G}^F$, where $\bG$ is a simple
algebraic group in characteristic $p$ and $F$ a Steinberg
endomorphism on $\bG$, the degree $\St(1)=|\mathbf{G}^F|_p$ of the
Steinberg character $\St$ of $\mathbf{G}^F$ is strictly larger than
the degree of any other unipotent character. Therefore, the degrees
of unipotent characters of $\bC_{G^\ast}(s)$, which are in fact the
same as those of the semisimple group $S$, are bounded by $\prod_j
|L_{r_i}(q^{a_i})|_{p}$. It follows that
\[
b(G)\leq \frac{|G|_{p'}}{\prod_i |L_{r_i}(q^{a_i})|_{p'} \prod_j
\Phi_j(q)} \prod_j |L_{r_i}(q^{a_i})|_{p},
\]
By Lemma \ref{lem:order-bound},
\[
\frac{|L_{r_i}(q^{a_i})|_{p}}{|L_{r_i}(q^{a_i})|_{p'}} \leq
\frac{1}{(q^{a_i}-1)^{r_i}}\leq \frac{1}{(q-1)^{a_ir_i}}.
\]
Also, it is easy to see that \[ \Phi_j(q)\geq (q-1)^{\deg \Phi_j}.
\]
We therefore deduce that
\[
b(G)\leq \frac{|G|_{p'}}{(q-1)^{\sum_i a_ir_i + \sum_j
\deg(\Phi_j)}}= \frac{|G|_{p'}}{(q-1)^r}.
\]
On the other hand, we have $|G|_{p'}\leq |G|_pq^r$ by again Lemma
\ref{lem:order-bound}, and it follows that
\[
\frac{b(G)}{|G|_p}\leq \frac{q^r}{(q-1)^r}.
\]
As the rank $r$ is at most $8$ for exceptional groups, the desired
inequalities follow.
\end{proof}

Bounds for $b(S)$ of alternating groups and classical groups are
collected in the following.

\begin{lem}\label{lem:bounds-for-b(S)}
Let $S$ be a finite simple group, $n$ a positive integer, and $q$ a
prime power.

\begin{enumerate}[\rm(i)]

\item For $S=\Al_n$ with $n\geq 5$, \[\frac{1}{2}e^{-1.28255\sqrt{n}}\sqrt{n!} \leq b(S)\leq
e^{-0.11565\sqrt{n}}\sqrt{n!}.\] In particular,
\[
b(S)>\frac{1}{2}e^{-1.28255\sqrt{n}} (2\pi n)^{1/4}(n/e)^{n/2}
\]

\item For $S=\Al_n$ with $n\geq 5$, \[b(\Al_{n+1})\geq \frac{2(n+1)}{\sqrt{8n+1}+3}b(\Al_n).\]

\item For $S=PSL_n(q)$ with $n\geq 2$,

\[{b(S)}<13(1+\log_q(n+1))^{2.54}\St_S(1).\]

\item For $S=PSU_n(q)$ with $n\geq 3$,
\[ {b(S)}<2(2+\log_q(n+1)^{1.27}\St_S(1).\]

\item For $S\in\{\Omega_{2n+1}(q), PSp_{2n}(q),P\Omega_{2n}^\pm(q)\}$ with $n\geq 2$ and $q$ odd,
\[ {b(S)}<38(1+\log_q(2n+1))^{1.27}\St_S(1).\]

\item For $S\in \{\Omega_{2n+1}(q), PSp_{2n}(q),P\Omega_{2n}^\pm(q)\}$ with $n\geq 2$ and $q$ even,
\[ {b(S)}<8(1+\log_q(2n+1))^{1.27}\St_S(1).\]

\end{enumerate}
\end{lem}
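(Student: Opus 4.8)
The plan is to treat parts (i) and (iii)--(vi) as essentially quotations from the literature, and to prove the new inequality (ii) by a short branching argument; the only places that need real work are (ii) and the elementary transitions between $\Sy_n$ and $\Al_n$.

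For part (i), I would invoke the Vershik--Kerov estimates \cite{VK85} for the symmetric group, namely $e^{-1.28255\sqrt{n}}\sqrt{n!}\le b(\Sy_n)\le e^{-0.11565\sqrt{n}}\sqrt{n!}$, and transfer them to $\Al_n$ using the elementary bounds $\tfrac12 b(\Sy_n)\le b(\Al_n)\le b(\Sy_n)$. The upper one holds because every $\psi\in\irr(\Al_n)$ is a constituent of $\chi_{\Al_n}$ for some $\chi\in\irr(\Sy_n)$, so $\psi(1)\le\chi(1)\le b(\Sy_n)$; the lower one because, choosing $\chi\in\irr(\Sy_n)$ with $\chi(1)=b(\Sy_n)$, the restriction $\chi_{\Al_n}$ is either irreducible or a sum of two irreducibles of degree $\chi(1)/2$. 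The final ``in particular'' is then immediate from Stirling's inequality $n!>\sqrt{2\pi n}\,(n/e)^{n}$.

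Part (ii) is the heart of the matter. I would fix $\chi\in\irr(\Sy_n)$ with $\chi(1)=b(\Sy_n)$, so that $\chi(1)\ge b(\Al_n)$ by part (i), and consider the induced character $\chi^{\Sy_{n+1}}$, of degree $(n+1)\chi(1)$. By the branching rule for symmetric groups it is multiplicity free, with exactly $N$ irreducible constituents, where $N$ is the number of addable cells of the partition labelling $\chi$; since a partition of $n$ with $d$ distinct part sizes has $d(d+1)/2\le n$ and exactly $d+1$ addable cells, $N\le(\sqrt{8n+1}+1)/2$. Restricting $\chi^{\Sy_{n+1}}$ to $\Al_{n+1}$, at most one of these $N$ constituents is reducible on $\Al_{n+1}$---the one, if it occurs, whose partition is self-conjugate---and that one splits into two characters of half its degree while the rest stay irreducible. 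Hence $\bigl(\chi^{\Sy_{n+1}}\bigr)_{\Al_{n+1}}$ is a sum of at most $N+1\le(\sqrt{8n+1}+3)/2$ irreducible characters of $\Al_{n+1}$ of total degree $(n+1)\chi(1)\ge(n+1)b(\Al_n)$, and therefore some $\phi\in\irr(\Al_{n+1})$ satisfies $\phi(1)\ge\dfrac{(n+1)b(\Al_n)}{(\sqrt{8n+1}+3)/2}=\dfrac{2(n+1)}{\sqrt{8n+1}+3}\,b(\Al_n)$, which is (ii).

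For parts (iii)--(vi) I would quote the explicit upper bounds of Larsen--Malle--Tiep \cite{LMT13} for $GL_n(q)$, $GU_n(q)$ and the odd- and even-characteristic classical groups; since the largest character degree of a quotient is at most that of the group, and since $\St_S(1)=|S|_p$ depends only on the Lie type and not on the isogeny class, these bounds descend to $PSL_n(q)$, $PSU_n(q)$, $\Omega_{2n+1}(q)$, $PSp_{2n}(q)$ and $P\Omega_{2n}^\pm(q)$. I expect the main obstacles to be logistical: pinning down the exact numerical constants in \cite{VK85} and \cite{LMT13} and checking that the crude reductions $b(\Al)\le b(\Sy)$ and $b(\text{quotient})\le b(\text{group})$ do not erode them. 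The one genuinely delicate point is in (ii): the factor-of-two loss from the splitting of characters on restriction to $\Al_{n+1}$ has to be confined to a single constituent so that it costs only the additive ``$+1$'' in the denominator; treating it crudely (for instance through $b(\Al_{n+1})\ge\tfrac12 b(\Sy_{n+1})$) would yield only the weaker bound $\tfrac{n+1}{\sqrt{8n+1}+1}\,b(\Al_n)$.
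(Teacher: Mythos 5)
Your proposal is correct and follows essentially the same route as the paper: parts (i) and (iii)--(vi) are quoted from Vershik--Kerov and Larsen--Malle--Tiep with the elementary $\Sy_n\leftrightarrow\Al_n$ and group$\to$quotient reductions, and part (ii) is proved by the same branching argument, including the key observation that at most one of the $\le(\sqrt{8n+1}+1)/2$ addable-cell partitions of $n+1$ is self-conjugate, so only one constituent can split upon restriction to $\Al_{n+1}$, giving the $+3$ rather than a doubled denominator. The only cosmetic difference is that you rederive the bound on the number of addable cells from $d(d+1)/2\le n$ rather than citing it.
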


\begin{proof}
Part (i) follows from \cite[Thm. 1]{VK85} and Parts (iii)-(vi)
follow from \cite[Thm. 5.1, 5.2, and 5.3]{LMT13}. Let us prove Part
(ii).

Let $\chi\in\irr(\Sy_{n})$ such that $\chi(1)=b(\Sy_{n})$. Let
$\lambda$ be the partition of $n$ corresponding to $\chi$ and
$Y_\lambda$ be the Young diagram associated to $\lambda$. By the
well-known branching rule, the induction $\chi^{\Sy_{n+1}}$ of
$\chi$ from $\Sy_n$ to $\Sy_{n+1}$ is the sum of irreducible
characters corresponding to the partitions of $n+1$ whose associated
Young diagrams are obtained from $Y_\lambda$ by adding a suitable
node. The number of those suitable nodes is at most
$(\sqrt{8n+1}+1)/2$ (see \cite[p. 1950]{HHN16}) and at most one of
the resulting Young diagrams is symmetric. We deduce that
\[
(n+1)b(\Al_n)\leq (n+1)\chi(1)=\chi^{\Sy_{n+1}}(1)\leq
\frac{\sqrt{8n+1}+3}{2} b(\Al_{n+1}),
\]
and the result follows.
\end{proof}


\section{Theorem~\ref{thm:main4}: Groups of Lie
type}\label{sec:theoremD-Lie}

In this section we prove Theorem \ref{thm:main4} when the groups
involved are of Lie type.

In the following, for simplicity, we say that two groups have the
same defining characteristic if there is a common characteristic
over which the groups can be defined.

\begin{prop}\label{prop:same-char}
Let $S$ and $H$ be finite simple groups of Lie type. Suppose that
$\cod(S)\subseteq \cod(H)$. Then $S$ and $H$ have the same defining
characteristic.
\end{prop}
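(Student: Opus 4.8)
The plan is to argue by contradiction: suppose $S$ and $H$ are simple groups of Lie type with $\cod(S)\subseteq\cod(H)$ but having no common defining characteristic, say $S$ is defined in characteristic $p$ and $H$ in characteristic $r\ne p$ (with $S,H$ not simultaneously expressible in a common characteristic — this only matters for the finitely many exceptional coincidences like $PSL_2(7)\cong PSL_3(2)$, which can be checked directly via \cite{Conway}). The key structural input is Lemma \ref{lem-chi-p-power}: since the defining characteristic of $H$ is $r$ and $\cod(S)\subseteq\cod(H)$, we get $|S|_{r'}=|H|_{r'}$ and, crucially, $S$ has an irreducible character $\chi$ with $\chi(1)=|S|_r$. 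But $S$ is a group of Lie type in characteristic $p\ne r$, so $|S|_r=|S|_{p'}\cdot(\text{something coprime to }p)$, i.e. $|S|_r$ is a divisor of $|S|$ coprime to $p$. On the other hand, by the standard bound (e.g. \cite[Cor. 2.30]{Isaacs} together with Lemma \ref{lem:aa} or directly from the structure of $S$) every character degree of $S$ is at most $|S|^{1/2}$, while also Lemma \ref{lem:aa} gives $|S|<(|S|_{p'})^2$. So the existence of $\chi$ with $\chi(1)=|S|_r\le |S|_{p'}$ is not in itself contradictory; the contradiction must come from a closer look.

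The cleaner route, which I would actually follow, is to run the symmetric argument once more. Apply Lemma \ref{lem-chi-p-power} with the roles reversed is not directly possible since we only have $\cod(S)\subseteq\cod(H)$, not equality; but we do know from Theorem \ref{thm-|S|divides|G|} that $|S|$ divides $|H|$, and from Lemma \ref{lem-chi-p-power} that $|S|_{r'}=|H|_{r'}$, hence $|H|/|S|=|H|_r/|S|_r$ is a power of $r$. Now I want to exploit the character $\chi\in\irr(S)$ with $\chi(1)=|S|_r$. Since $\chi$ is nontrivial (as $|S|_r>1$, because $r\mid|S|$ was established in the proof of Lemma \ref{lem-chi-p-power}), $\cod(\chi)=|S|/|S|_r=|S|_{r'}$ is a nontrivial codegree of $S$, hence of $H$. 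So $H$ has a codegree equal to $|S|_{r'}=|H|_{r'}$, which means $H$ has a character of degree $|H|_r/(\text{divisor})$... actually $|S|_{r'}\in\cod(H)$ forces some $\psi\in\irr(H)$ with $|H|/\psi(1)=|S|_{r'}=|H|_{r'}$, so $\psi(1)=|H|_r=\St_H(1)$: thus $\psi=\St_H$ (the Steinberg character is the unique one of degree $|H|_r$). Fine — this is consistent. The real contradiction will come from the fact that $|S|_r$, being a character degree of $S$, must then be compatible with the very restrictive structure of character degrees of $S$ that are coprime to the defining characteristic $p$.

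So the crux is: $S$ is simple of Lie type in characteristic $p$, and it has an irreducible character $\chi$ whose degree $\chi(1)=|S|_r$ is coprime to $p$ (since $r\ne p$). But a character of $S$ of degree coprime to $p$ has $p$-defect... no — I want the opposite: $\chi(1)=|S|_r$ together with $\cod(\chi)=|S|_{r'}$ coprime to $r$ tells us $\chi$ has $r$-defect zero as an abstract character, but $r$ is not the defining characteristic of $S$, and defect-zero characters for non-defining primes in groups of Lie type are severely constrained in degree. More usefully: $\chi(1)=|S|_r$ divides $|S|_{r'}$ is false in general, so $\chi(1)=|S|_r$ is a genuine constraint because $|S|_r$ can be large. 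The plan is to combine the lower bound for such $\chi(1)$ forced by $\cod(\chi)=|S|_{r'}\ge f(H)$ (via Lemma \ref{lem:fS geq fH}, which gives $f(S)\ge f(H)$) with the fact that $|S|_r$ — a degree of $S$ coprime to $p$ — satisfies $|S|_r\le b(S)/\St_S(1)$ type control, i.e. $|S|_r$ divides $|S|_{p'}$ up to the $p'$-part... Concretely, write $|S|_r\mid |S|_{p'}$: this is automatic since $r\ne p$. Then $\chi(1)=|S|_r\le|S|_{p'}<\St_S(1)^2$ by Lemma \ref{lem:aa}, harmless; but on the other side $\chi(1)$ is a multiple of $|S|_p=\St_S(1)$? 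No. Hmm.

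\textbf{The main obstacle}, and what I'd spend most effort on, is pinning down exactly which representation-theoretic fact delivers the contradiction. I expect the right statement is: for $S$ simple of Lie type in characteristic $p$, the only irreducible character of degree $|S|_r$ for a prime $r\ne p$ would have to be — by the classification of prime-power-degree characters \cite{Malle-Zalesskii,BBOO01} applied to $|S|_r$ when $|S|_r$ happens to be a prime power, or more generally by size comparison using Lemma \ref{lem:bounds-for-b(S)} and Theorem \ref{lem:exceptional-b(S)} — impossible, because $|S|_r$ is ``too round'': it is the full $r$-part of $|S|$, which by Lemma \ref{lem:aa} exceeds $|S|^{1/2}$-type thresholds only when... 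Actually the clean finish: since $r\mid|H|$ and $|S|_{r'}=|H|_{r'}$ with $|S|\mid|H|$, we have $|H|_r\ge|S|_r$; and the largest power of $r$ dividing any codegree of $H$ is $|H|_r$ (as noted in the introduction's discussion), while the largest power of $r$ dividing any codegree of $S$ is $|S|_r$. Since $\cod(S)\subseteq\cod(H)$, every power of $r$ occurring is $\le|S|_r$. That's consistent. The genuine contradiction must therefore use that $S$, having defining characteristic $p\ne r$, has its Steinberg codegree $|S|_{p'}=|S|/\St_S(1)\in\cod(S)\subseteq\cod(H)$, forcing $H$ to have a character of degree $|H|/|S|_{p'}=\St_S(1)\cdot(|H|/|S|)$ coprime to $p$ in its $\St_S(1)$-part — but $\St_S(1)$ is a $p$-power with $p$ NOT the defining characteristic of $H$, so $H$ has a character whose degree is divisible by a large $p$-power $\St_S(1)$, contradicting the fact that character degrees of $H$ (defining characteristic $r$) that are divisible by high powers of a non-defining prime $p$ are bounded by $b(H)<c\cdot\St_H(1)=c\cdot|H|_r$ from Lemma \ref{lem:bounds-for-b(S)}/Theorem \ref{lem:exceptional-b(S)}, hence the $p$-part of any character degree of $H$ is at most (roughly) $|H|_p\cdot$(small), giving $\St_S(1)\cdot|H|/|S|\le b(H)<c|H|_r$; combined with $|S|_{r'}=|H|_{r'}$ and the order bounds of Lemma \ref{lem:order-bound}, this pushes the ranks and $q$'s into an impossible regime, leaving only small cases to clear by \cite{Conway}. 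I would structure the proof exactly along this last line, treating the $p$-part of character degrees of $H$ as the main quantitative lever.
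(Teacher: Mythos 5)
The proposal correctly identifies the entry point (Lemma~\ref{lem-chi-p-power} gives a character $\chi\in\irr(S)$ with $\chi(1)=|S|_r$ where $r$ is $H$'s characteristic, and $|S|_{r'}=|H|_{r'}$) and it does gesture at the right tool, the prime-power-degree classification of Malle--Zalesskii. But there is a genuine gap, in two places.

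First, you hedge on whether the classification applies at all (``applied to $|S|_r$ when $|S|_r$ happens to be a prime power''), but $|S|_r$ is \emph{always} a prime power by definition: it is the $r$-part of $|S|$. That is precisely why the classification can be brought to bear, and this observation should be stated, not left contingent. Second, and more seriously, you treat the classification as though it instantly returns ``impossible,'' writing that the degree $|S|_r$ ``would have to be $\ldots$ impossible.'' It does not: the Malle--Zalesskii list contains several infinite families (for example $PSL_2(q)$ with degree $q\pm 1$ or $(q\pm 1)/2$, $PSL_n(q)$ with degree $(q^n-1)/(q-1)$, $PSp_{2n}(q)$ with degree $(q^n+1)/2$, etc.), and these need to be eliminated one by one. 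The elimination is where the real content of the proposition lies, and the mechanism is the one you never isolate: from Lemma~\ref{lem-x=H/S} one gets that $d\cdot |H|_r/\chi(1)\in\cd(H)$ for \emph{every} $d\in\cd(S)$; hence exhibiting a character of $S$ whose degree is a proper multiple of $\chi(1)$ forces $H$ to have a degree that is a proper multiple of $|H|_r=\St_H(1)$, which is impossible since $\St_H(1)$ is the unique largest power of $r$ dividing any degree of $H$. That lever handles almost all items on the Malle--Zalesskii list; the leftover $PSL_2(q)$ subcases require a further arithmetic argument (Mihailescu's theorem on $2^t\pm 1 = p^x$) and a comparison of $|H|_{r'}$ with the order formula. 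Your alternative ``cleaner finish'' via the Steinberg codegree of $S$ producing a degree of $H$ with large $p$-part and then invoking the bounds of Lemma~\ref{lem:bounds-for-b(S)} is not developed to the point where it rules anything out; as stated it is an expectation, not a proof, and I do not see how to make those bounds finish the job without falling back on the same sort of case analysis. So the proposal captures the skeleton but is missing the load-bearing step.
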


\begin{proof} Suppose that the defining characteristic of $H$ is
$p$. By Lemma~\ref{lem-chi-p-power}, $|S|_{p'}=|H|_{p'}$ and there
is $\chi\in\irr(S)$ such that $\chi(1)=|S|_p$. By
Lemma~\ref{lem-x=H/S}, it follows that

\begin{equation}\label{eq1}
d\cdot\frac{|H|_p}{\chi(1)}\in \cd(H) \text{ for every } d\in\cd(S).
\end{equation}

Certainly if $\chi$ is the Steinberg character of $S$ then we are
done. So we assume otherwise and aim to find a contradiction or end
up with a case where $H$ can be defined in another characteristic
not equal to $p$. By the classification of prime-power-degree
representations of quasi-simple groups \cite[Thm.
1.1]{Malle-Zalesskii}, we arrive at the following possibilities of
$S$ and $\chi(1)$.

\medskip

(i) $S=PSL_2(q)$, $\chi(1)\in\{q\pm 1\}$ or $q$ is odd and
$\chi(1)\in \{(q\pm 1)/2\}$. Observe that $\chi(1)$ cannot be $(q\pm
1)/2$ because otherwise, by taking $d=2\chi(1)$, we would have
$2|H|_p\in\cd(H)$, which is impossible. So $\chi(1)=q+\alpha=p^x$
for some $\alpha\in\{\pm 1\}$ and $x\in \mathbb{N}$. Suppose first
that $q=2^t$ for some $t\geq 2$. Then $2^t+\alpha=p^x$. By
Mihailescu's theorem \cite{Mih04} (previously known as Catalan's
conjecture), either $x=1$ so that $2^t+\alpha$ is a (Mersenne or
Fermat) prime or $\alpha=1$ and $t=3$. In the latter case, $p=3$ and
$|H|_{3'}=|S|_{3'}=56$, forcing $H$ to be ${}^2G_2(3)'$, which turns
out to be isomorphic to $S=PSL_2(8)$, as desired. So it remains to
consider the former case: $q+\alpha=2^t+\alpha=p$ is the defining
characteristic of $H$. Now $|H|_{p'}=|S|_{p'}=q(q-\alpha)$. Let
$p^a$ be the order of the underlying field of $H$. It is clear from
the order formulas of simple groups of Lie type (see \cite[p.
xvi]{Conway}) that $|H|_p< |H|_{p'}/(p^a-1)\leq |H|_{p'}/(p-1)$. We
therefore deduce that
\[|H|_p<\frac{q(q-\alpha)}{q+\alpha-1}.\]
Thus we must have $\alpha=-1$ and $|H|_p=p$. Now $H$ is a simple
group of Lie type in characteristic $p$ such that $|H|=p(p+1)(p+2)$.
This is impossible as for such a group $H$, one can check from the
order formula that $|H|_{p'}<(|H|_p)^2$.

Next we suppose $q\geq 5$ is odd. Then $p=2$ and $q+\alpha =|S|_2$.
Again by Mihailescu's theorem, either $q$ is a prime or $\alpha=-1$
and $q=9$. The case $q=9$ is eliminated in the same way as before.
So assume that $|H|_{2'}=q(q-\alpha)/2$ and $q$ is a prime. Note
that when $H$ is not of type $A_1$, every prime divisor of
$|H|_{p'}$ is smaller than $\sqrt{|H|_{p'}}$. Therefore our group
$H$ must be $PSL_2(q_1)$ for some $2$-power $q_1$, implying that
$q(q-\alpha)/2=q_1^2-1$. This, however, returns no relevant
solutions.

\medskip

(ii) For the remaining possibilities of $S$ and $\chi$, the
character $\chi$ has a decent small degree and we are able to
produce an irreducible character of $S$ whose degree is a proper
multiple of $\chi(1)$. Condition (\ref{eq1}) then implies that a
proper multiple of $|H|_p$ is a character degree of $H$, which is
impossible. This required character turns out to be chosen as a
unipotent character in most cases. We refer the reader to \cite[\S
13.8]{Carter85} for the description of unipotent characters of
finite classical groups.

The next possibility of $S$ and $\chi(1)$ is $S=PSL_n(q)$, $q>2$,
$n$ is an odd prime, $(n,q-1)=1$, and $\chi(1)=(q^n-1)/(q-1)$. If
$n=3$ then $SL_3(q)$ has an irreducible character of degree $q^3-1$
(see \cite{Simpson-Frame73}), which is a proper multiple of
$\chi(1)=q^2+q+1$. For $n\geq 5$, the unipotent character
parameterized by the partition $(2,n-2)$ with degree
\[
\chi^{(2,n-2)}(1)=\frac{(q^n-1)(q^{n-1}-q^2)}{(q-1)(q^2-1)}
\]
fulfills our requirement.

Another possibility is $S=PSU_n(q)$, $n$ is an odd prime,
$(n,q+1)=1$, and $\chi(1)=(q^n+1)/(q+1)$. This case is handled
similarly as in the previous one. Here we note that, when $n\geq 5$
is odd, the unipotent character parameterized by the partition
$(2,n-2)$ has degree
\[
\chi^{(2,n-2)}(1)=\frac{(q^n+1)(q^{n-1}-q^2)}{(q+1)(q^2-1)}.
\]

The next case is $S=PSp_{2n}(q)$, $n\geq 2$, $q=\ell^t$ with $\ell$
an odd prime, $tn$ is a $2$-power, and  $\chi(1)=(q^n+1)/2$. Now the
unipotent character parameterized by the symbol ${0\hspace{6pt} 1
\choose n}$ has required degree
\[
\chi^{{0\hspace{6pt} 1 \choose
n}}(1)=\frac{(q^n+1)(q^{n}+q)}{2(q+1)}.
\]

The last possibility involving a family of groups is
$S=PSp_{2n}(3)$, $n\geq 3$ is a prime, and $\chi(1)=(3^n-1)/2$. Then
$S$ has a unipotent character with degree
\[
\chi^{{0\hspace{6pt} 1 \hspace{6pt} n\choose
-}}(1)=\frac{(3^n-1)(3^{n}-3)}{8}.
\]

\medskip

(iii) $(S,\chi(1))\in\{(Sp_6(2),7)$, $(Sp_6(2), 27)$, $({}^2F_4(2)',
27)$, $(G_2(2)', 7)$, $(G_2(2)', 27)$, $(G_2(3), 64)\}$. First
assume that $S=G_2(2)'$ and $\chi(1)=27$. Then $p=3$ and
$|H|_{3'}=|S|_{3'}=224$. It is easy to check that there is no such
simple group $S$ of Lie type in characteristic 3 with $27\mid
|S|_3$. In all other cases one can find a character $\psi\in\irr(S)$
such that $\chi(1) \mid \psi(1)$ but $\chi(1)<\psi(1)$. Therefore
$\psi(1){|H|_p}/\chi(1)$ is a proper multiple of $|H|_p$ and thus
cannot be a character degree of $H$, violating condition
(\ref{eq1}).
\end{proof}


As seen in Lemma \ref{lem-chi-p-power} and Proposition
\ref{prop:same-char}, we face the situation where two simple groups
$S$ and $H$ of Lie type have the same characteristics $p$ and
$|S|_{p'}=|H|_{p'}$. It is surprising to us that this turns out to
happen only when $|S|=|H|$ (see Proposition \ref{prop:|S|_p'=|H|_p'}
below), and therefore they are among the coincidences appeared in
the simple order theorem.

We slightly modify two of the invariants in Artin's proof of the
simple order theorem (for classical groups) to prove our results.

\begin{define}
Let $S$ be a finite group of Lie type in characteristic $p$. Let
$\omega=\omega(S)$ and $\psi=\psi(S)$ respectively denote the
largest and the second largest orders of $p$ modulo a prime divisor
of $|S|_{p'}$. We will refer to $\omega(S)$ and $\psi(S)$ as the
Artin invariants of $S$.
\end{define}

In fact, when $p$ is the dominant prime of $|S|$, these $\omega(S)$
and $\psi(S)$ coincide with Artin's invariants defined in
\cite{Artin55}. We remark that there are only a few cases involving
Mersenne and Fermat primes where $p$ is not dominant in $|S|$, and
they are listed in \cite[Thm. 3.3]{Kimmerle-et-al}.

Assume for now that $S$ is not one of $G_2(2)'$, ${}^2G_2(3)'$, and
${}^2F_4(2)'$. (Note that $S_1=G_2(2)'\cong PSU_3(9)$ and
$S_2={}^2G_2(3)'\cong PSL_2(8)$ and, even though we do not allow
$S_1$ (or $S_2$) to be viewed as a Lie type group over a field of
order 2 (or 3), we do allow it to be viewed as one over the field of
order 9 (or 8).) Let $q=p^t$ be the order of the underlying field of
$S$. It is well-known that the order $|S|$ then has the standard
cyclotomic factorization in terms of $q$ as
\[
|S|=\frac{1}{d}q^k \prod_i\Phi_i(q)^{e_i},
\]
where $d$, $k$, $e_i$s depend on $S$ and can be found in \cite[Table
6]{Conway} and \cite[Tables C1, C2, and C3]{Kimmerle-et-al} for
instance, and $\Phi_i(q)$ is the value of the $i$th cyclotomic
polynomial evaluated at $q$. Replacing $q$ by $p^n$ and factorizing
each $\Phi_i(x^t)$ further into cyclotomic polynomials of $x$, one
has
\[
|S|=\frac{1}{d}p^{kt} \prod_i\Phi_i(p)^{f_i}
\]
for certain positive integers $f_i$s depending on $S$.

Using Zsigmondy's theorem, it is not difficult to see that, except
for some `small' cases, the invariants $\omega(S)$ and $\psi(S)$ are
precisely the largest and second largest, respectively, index $i$
such that $\Phi_i(p)$ appears in the cyclotomic factorization of
$|S|$ (see \cite[Lem. 4.6]{Kimmerle-et-al}). We refer the reader to
\cite[Tables A1, A2 and A3]{Kimmerle-et-al} for the list of
exceptions and the values of their Artin's invariants, including the
groups $G_2(2)'$, ${}^2G_2(3)'$, and ${}^2F_4(2)'$ we excluded
earlier. We reproduce in Table \ref{Artin invariants} the values of
$\omega(S)$ and $\psi(S)$ for the generic case only.

\begin{table}[ht]
\caption{$\omega(S)$ and $\psi(S)$ for simple groups of Lie type:
generic case.\label{Artin invariants}}
\begin{center}
\begin{tabular}{llll}
\hline \begin{tabular}{l} $S$\\ ($q=p^t$)\end{tabular} &
\begin{tabular}{l} Conditions\\ ($p$ a Mersenne prime)\end{tabular}&
$\omega(S)$& $\psi(S)$ \\ \hline

$PSL_n(q), n\geq 2$& \begin{tabular}{l} $(n,q)\neq (2,2^6),(3,2^2),(3,2^3)$,\\
$(4,2^2), (6,2),(7,2),(2,p^2),(3,p)$\end{tabular}  & $nt$ & $(n-1)t$ \\

$PSU_4(q)$&   & $6t$ & $4t$ \\

$PSU_n(q), n\geq 3$ odd& $(n,q)\neq (3,2^3),(5,2),(3,p)$  & $2nt$ & $2(n-2)t$ \\

$PSU_n(q), n\geq 6$ even& $(n,q)\neq (6,2)$  & $2(n+1)t$ & $2(n-1)t$ \\

$\Omega_{2n+1}(q), n\geq 2$& $(n,q)\neq (2,2^8),(3,2),(4,2),(2,p)$  & $2nt$ & $2(n-1)t$ \\

$PSp_{2n}(q), n\geq 3$&  & $2nt$ & $2(n-1)t$ \\

$P\Omega^+_{2n}(q), n\geq 4$& $(n,q)\neq (4,2),(5,2)$  & $2(n-1)t$ & $2(n-2)t$ \\

$P\Omega^-_{2n}(q), n\geq 4$& $(n,q)\neq (4,2)$  & $2nt$ & $2(n-1)t$ \\

${}^2B_2(2^t), t\geq 3$ odd& $t\equiv 3(\bmod 6)$  & $4t$ & $4t/3$ \\

${}^2B_2(2^t), t\geq 3$ odd& $t\equiv \pm1(\bmod 6)$  & $4t$ & $t$ \\

$G_2(q), q\geq 3$& $q\neq 4$  & $6t$ & $3t$ \\

${}^2G_2(3^t), t\geq 3$ odd&  & $6t$ & $2t$ \\

${}^3D_4(q)$& $q\neq 2$ & $12t$ & $6t$ \\

$F_4(q)$&  & $12t$ & $8t$ \\

${}^2F_4(2^t), t\geq 3$ odd&  & $12t$ & $6t$ \\

$E_6(q)$&  & $12t$ & $9t$ \\

${}^2E_6(q)$&  & $18t$ & $12t$ \\

$E_7(q)$&  & $18t$ & $14t$ \\

$E_8(q)$&  & $30t$ & $24t$ \\

\hline
\end{tabular}
\end{center}
\end{table}

\begin{prop}\label{prop:|S|_p'=|H|_p'}
Let $p$ be a prime. Suppose that $S$ and $H$ are two non-isomorphic
simple groups of Lie type in characteristic $p$ and
$|S|_{p'}=|H|_{p'}$. Then $\{S,H\}=\{PSL_4(2),PSL_3(4)\}$ or
$\{S,H\}=\{\Omega_{2n+1}(q), PSp_{2n}(q)\}$ for some $n\geq 3$ and
odd $q$. In particular, $|S|=|H|$.
\end{prop}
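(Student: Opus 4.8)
The plan is to leverage the two Artin invariants $\omega(S)$ and $\psi(S)$ introduced just above. The key observation is that if $S$ and $H$ are simple of Lie type in characteristic $p$ with $|S|_{p'}=|H|_{p'}$, then the set of primes dividing $|S|_{p'}$ equals the set of primes dividing $|H|_{p'}$, and for each such prime $\ell$ the multiplicative order of $p$ modulo $\ell$ is an intrinsic invariant of that common integer; hence $\omega(S)=\omega(H)$ and $\psi(S)=\psi(H)$. So the first step is to record this and then go to the tables: using Table~\ref{Artin invariants} (and the list of small exceptions in \cite[Tables A1, A2, A3]{Kimmerle-et-al}), the pair $(\omega,\psi)$ essentially pins down the \emph{type} of the group together with its rank and the quantity $t$ (the log of the field size over $p$), up to the handful of genuine coincidences the table exhibits.

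Concretely, I would first dispose of the finitely many small cases from the exception tables of \cite{Kimmerle-et-al} (including $G_2(2)'$, ${}^2G_2(3)'$, ${}^2F_4(2)'$, which we set aside in the discussion) by direct inspection of orders: these are finitely many groups, and one checks that among them the only non-isomorphic pairs with equal $p'$-part are the asserted ones (in fact only $\{PSL_4(2),PSL_3(4)\}$ arises here, with $p=2$; note $|PSL_4(2)|_{2'}=|PSL_3(4)|_{2'}=315$). Then, for the generic case, I would read off from the $(\omega(S),\psi(S))$ column which rows of Table~\ref{Artin invariants} can possibly coincide. The ratio $\psi/\omega$ already separates most families: it is $(n-1)/n$ for linear, $(n-2)/n$ for odd unitary, $(n-1)/(n+1)$ for even unitary, $(n-1)/n$ for odd-dimensional orthogonal and symplectic, $(n-2)/(n-1)$ for $D^+$, $(n-1)/n$ for $D^-$, and fixed rational constants $2/3, 8/12, 9/12, 12/18, 14/18, 24/30$ for the exceptional families. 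One then matches: the only families sharing both $\omega$ and the same $\psi$ are $\Omega_{2n+1}(q)$ and $PSp_{2n}(q)$ (both give $\omega=2nt$, $\psi=2(n-1)t$), and — after checking ratios and then the actual orders — no exceptional family collides with another, nor does $PSL_n(q)$ collide with $\Omega_{2n'+1}$ or $PSp_{2n'}$ since the cyclotomic factorizations of $|S|_{p'}$ differ (e.g.\ a symplectic group of rank $n$ has $\Phi_{2i}$-contributions for all $i\le n$, forcing a different prime content).

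Having narrowed down to $S,H$ of types $B_n$ and $C_n$ over the same field of size $q=p^t$, I would finish by comparing full orders: $|\Omega_{2n+1}(q)|=|PSp_{2n}(q)|=\tfrac12 q^{n^2}\prod_{i=1}^n(q^{2i}-1)$, so automatically $|S|=|H|$, and for $q$ even these two groups are in fact isomorphic (ruling out $q$ even), while for $q$ odd they are non-isomorphic — this is exactly the second coincidence in the simple order theorem, with the constraint $n\ge 3$ coming from the low-rank isogenies ($B_2=C_2$, and $B_1,C_1$ trivialities). A symmetric bookkeeping step is needed to ensure I have not missed a cross-type match where $\omega(S)=\omega(H)$ but the field sizes or the subtleties of Suzuki/Ree polynomials (the $q^2\pm\sqrt2 q+1$ factors) enter; these are handled by noting such factors contribute primes $\ell$ with $\operatorname{ord}_p(\ell)$ not a classical even index, so they cannot be matched by a classical group, forcing both groups to be of the same twisted exceptional type and then equal.

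The main obstacle I anticipate is not conceptual but the sheer case-checking: one must be careful that $(\omega,\psi)$ plus the prime-content of $|S|_{p'}$ really does determine the isomorphism type outside the listed coincidences, and in particular that the ``second largest order'' $\psi$ behaves as the table claims in all Zsigmondy-exceptional situations — this is precisely where one must defer to \cite[Lem.\ 4.6]{Kimmerle-et-al} and the exception tables rather than argue from scratch. In effect the proposition is a slightly repackaged special case of Artin's classical argument, so the honest proof is: cite the Artin-invariant machinery of \cite{Kimmerle-et-al}, observe $|S|_{p'}=|H|_{p'}$ forces equal Artin invariants \emph{and} equal prime sets, and then run the table comparison to its only two non-isomorphic conclusions.
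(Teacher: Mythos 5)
Your overall strategy — pass from $|S|_{p'}=|H|_{p'}$ to equality of the Artin invariants $(\omega,\psi)$, then run a table comparison splitting into the generic case and the finitely many Zsigmondy/small exceptions from \cite{Kimmerle-et-al} — is exactly the paper's approach. However, several of your intermediate assertions about the collision structure are simply false, and if taken at face value they would make you miss most of the case analysis that actually has to be done.

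Concretely, you claim that after comparing $\omega$ and $\psi$ (and ratios $\psi/\omega$), ``the only families sharing both $\omega$ and the same $\psi$ are $\Omega_{2n+1}(q)$ and $PSp_{2n}(q)$.'' That is wrong: the paper's Table~\ref{Artin invariants2} lists many $(\omega,\psi)$-collisions, starting with the five-member collection $PSL_n(p^{2s})$, $\Omega_{2n+1}(p^s)$, $PSp_{2n}(p^s)$, $P\Omega^+_{2(n+1)}(p^s)$, $P\Omega^-_{2n}(p^s)$, all of which share $\omega=2ns$, $\psi=2(n-1)s$. Your ratio test cannot see these, because the families sit over \emph{different} fields $p^{2s}$ versus $p^s$, so the same rational ratio $(n-1)/n$ occurs across types with compensating field sizes. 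Likewise your claim that ``no exceptional family collides with another'' and that Suzuki/Ree groups ``cannot be matched by a classical group'' is false at the $(\omega,\psi)$ level: e.g.\ $G_2(p^{2s})$ and ${}^3D_4(p^s)$ both have $(\omega,\psi)=(12s,6s)$, and $PSU_3(2^{2s})$ collides with ${}^2B_2(2^{3s})$, $PSU_3(3^s)$ with ${}^2G_2(3^s)$, and $PSL_2(2^{6s})$ with $\Omega_5(2^{3s})$, $G_2(2^{2s})$, ${}^3D_4(2^s)$, ${}^2F_4(2^s)$. In other words, $(\omega,\psi)$ does \emph{not} nearly determine the type: the bulk of the work — which your proposal shortcuts away — is to take each of the roughly sixteen collision families in Table~\ref{Artin invariants2} (plus the small exceptions) and compare the actual $p'$-parts of the orders, which is where everything but $\{\Omega_{2n+1}(q),PSp_{2n}(q)\}$ and $\{PSL_4(2),PSL_3(4)\}$ gets eliminated. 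Your hand-waving ``the cyclotomic factorizations differ'' is the right instinct, but it is precisely the nontrivial verification, not something that follows from the ratio comparison. So the gap is not in the framework (which matches the paper) but in the claim that the framework terminates after the cheap $(\omega,\psi)$ filter; it does not, and the filter is far leakier than you assert.
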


\begin{proof}
By the assumptions, we have $\omega(S)=\omega(H)$ and
$\psi(S)=\psi(H)$. First we consider the case where both $S$ and $H$
are generic so that their invariants $\omega$ and $\psi$ are
available in Table \ref{Artin invariants}. Comparing those values,
we can find all the collections of groups with equal values of
$\omega$ and $\psi$. We list these collections in Table \ref{Artin
invariants2} (each row in the table is one such collection). Now one
simply compare the $p'$-parts of orders of groups in each
collection. It turns out that the only pair with the same $p'$-parts
of orders is $\{\Omega_{2n+1}(q), PSp_{2n}(q)\}$ with some $n\geq 3$
and odd $q$.

Assume now that at least one of the two groups, say $S$, is
non-generic. That is, $S$ is among the exceptions listed in the
second column of Table \ref{Artin invariants}. The values of the
invariants $\omega$ and $\psi$ of these groups are available in
\cite[Tables A2 and A3]{Kimmerle-et-al}. The analysis is basically
the same as in the generic case, but more tedious. We first find all
the possible groups $H$ with $\omega(S)=\omega(H)$ and
$\psi(S)=\psi(H)$, and then compare $|S|_{p'}$ and $|H|_{p'}$, where
$p$ is the defining characteristic of $S$ and $H$.

Let us demonstrate the case $S=PSL_3(4)$ as an example. Then
$\omega(S)=4$ and $\psi(S)=3$. But there are only two other simple
groups of Lie type with the same values of $\omega$ and $\psi$,
namely $PSL_4(2)$ and $P\Omega^+_8(2)$. However,
$|P\Omega^+_8(2)|_{2'}\neq |PSL_3(4)|_{2'}=|PSL_4(2)|_{2'}$, and so
we come up with another possible pair for $\{S,H\}$, namely
$\{PSL_4(2),PSL_3(4)\}$, as stated in the theorem.
\end{proof}

\begin{table}[ht]
\caption{Simple groups of Lie type with the same values of $\omega$
and $\psi$: generic case.\label{Artin invariants2}}
\begin{center}
\begin{tabular}{l}
\hline

$PSL_n(p^{2s})$, $\Omega_{2n+1}(p^s)$, $PSp_{2n}(p^s)$, $P\Omega^+_{2(n+1)}(p^s)$, $P\Omega^-_{2n}(p^s)$ \\

$PSL_3(p^{2s})$, $PSU_4(p^s)$, $\Omega_{7}(p^s)$,
$PSp_{6}(p^s)$, $P\Omega^+_{8}(p^s)$\\

$PSL_2(p^{6s})$, $\Omega_5(p^{3s})$, $G_2(p^{2s})$, ${}^3D_4(p^s)$\\

$PSL_2(p^{3s})$, $G_2(p^s)$\\

$PSL_3(p^{4s})$, $PSU_4(p^{2s})$, $\Omega_{7}(p^{2s})$,
$PSp_{6}(p^{2s})$, $P\Omega^+_{8}(p^{2s})$,   $F_4(p^s)$\\

$PSL_2(2^{6s})$, $\Omega_5(2^{3s})$, $G_2(2^{2s})$, ${}^3D_4(2^s)$, ${}^2F_4(2^s)$, $s\geq 3$ odd\\

$PSL_4(p^{3s})$, $E_6(p^s)$\\

$PSL_4(p^{6s})$, $\Omega_9(p^{3s})$, $P\Omega_{10}^+(p^{3s})$,
$P\Omega_{8}^-(p^{3s})$, $E_6(p^{2s})$\\

$PSL_3(p^{6s})$, $PSU_4(p^{3s})$, $\Omega_{7}(p^{3s})$,
$PSp_{6}(p^{3s})$, $P\Omega^+_{8}(p^{3s})$, ${}^2E_6(p^s)$\\

$PSL_3(p^{12s})$, $\Omega_{7}(p^{6s})$, $PSp_{6}(p^{6s})$, $P\Omega^+_{8}(p^{6s})$, $F_4(p^{3s})$, ${}^2E_6(p^{2s})$\\

$PSL_5(p^{6s})$, $\Omega_{11}(p^{3s})$, $PSp_{10}(p^{3s})$, $P\Omega_{12}^+(p^{3s})$, $P\Omega_{10}^-(p^{3s})$, $E_8(p^s)$\\

$PSU_n(q)$, $PSU_{n-1}(q)$, $n\geq 7$ odd\\

$PSU_3(2^{2s})$, ${}^2B_2(2^{3s})$, $s$ odd\\

$PSU_3(3^{s})$, ${}^2G_2(3^{s})$, $s\geq 3$ odd\\

$PSU_9(p^s)$, $PSU_8(p^s)$, $E_7(p^s)$\\

$\Omega_{2n+1}(q)$, $PSp_{2n}(q)$, $n\geq 3$, $q$ odd\\

\hline
\end{tabular}
\end{center}
\end{table}

The next theorem improves Theorem \ref{cor-cod(S)=cod(H)} when the
relevant groups are of Lie type.

\begin{thm}\label{thm:Lie type}
Let $S$ and $H$ be finite simple groups of Lie type such that
$\cod(S)\subseteq \cod(H)$. Then $S\cong H$.
\end{thm}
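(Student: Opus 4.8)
The plan is to combine the results already established: by Proposition \ref{prop:same-char}, $S$ and $H$ share a common defining characteristic $p$, and by Lemma \ref{lem-chi-p-power} we have $|S|_{p'}=|H|_{p'}$ together with a character $\chi\in\irr(S)$ of degree $|S|_p$. First I would invoke Proposition \ref{prop:|S|_p'=|H|_p'}, which tells us that either $S\cong H$ (and we are done), or $|S|=|H|$ and $\{S,H\}$ is one of the two exceptional families from the simple order theorem, namely $\{PSL_4(2),PSL_3(4)\}$ or $\{\Omega_{2n+1}(q),PSp_{2n}(q)\}$ with $n\geq 3$ and $q$ odd. So the whole theorem reduces to ruling out these coincidences under the hypothesis $\cod(S)\subseteq\cod(H)$.

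Since $|S|=|H|$ in both exceptional cases, $\cod(S)\subseteq\cod(H)$ is equivalent to $\cd(S)\subseteq\cd(H)$ (for any $d\in\cd(S)$, $d\neq 1$, we get $|S|/d\in\cod(H)=|H|/\cd(H)$, forcing $d\in\cd(H)$, and conversely). So I need to show that neither group in each pair has its character degree set contained in the other's. For $\{PSL_4(2),PSL_3(4)\}$ this is the same finite check already used in the proof of Theorem \ref{cor-cod(S)=cod(H)} and Theorem \ref{thm:main1repeated}: using the \texttt{ATLAS} \cite{Conway} one finds $7\in\cd(PSL_4(2))\setminus\cd(PSL_3(4))$ and $63\in\cd(PSL_3(4))\setminus\cd(PSL_4(2))$, so neither containment holds. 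For the family $\{\Omega_{2n+1}(q),PSp_{2n}(q)\}$, this is precisely the content of Lemma \ref{lem:O2n+1andPSp}, which asserts $\cd(\Omega_{2n+1}(q))\nsubseteq\cd(PSp_{2n}(q))$ and $\cd(PSp_{2n}(q))\nsubseteq\cd(\Omega_{2n+1}(q))$ for all $n\geq 3$ and odd $q$.

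Putting these together: if $S\ncong H$, then by Proposition \ref{prop:|S|_p'=|H|_p'} (applied with the common characteristic $p$ supplied by Proposition \ref{prop:same-char} and the equality of $p'$-parts from Lemma \ref{lem-chi-p-power}) we would be in one of the two exceptional cases, hence $|S|=|H|$ and $\cd(S)\subseteq\cd(H)$, contradicting either the \texttt{ATLAS} computation or Lemma \ref{lem:O2n+1andPSp}. Therefore $S\cong H$. I expect the only mildly delicate point to be the bookkeeping around the non-generic groups $G_2(2)'\cong PSU_3(9)$, ${}^2G_2(3)'\cong PSL_2(8)$, and ${}^2F_4(2)'$, but these are already absorbed into Proposition \ref{prop:same-char} and Proposition \ref{prop:|S|_p'=|H|_p'}, so at this level the argument is essentially a short deduction from the machinery built in this section and the previous ones; the real work has been front-loaded into Lemma \ref{lem:O2n+1andPSp} and the Artin-invariant analysis.
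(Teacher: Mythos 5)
Your proposal is correct and follows essentially the same route as the paper: Proposition \ref{prop:same-char} and Lemma \ref{lem-chi-p-power} supply a common characteristic $p$ with $|S|_{p'}=|H|_{p'}$, Proposition \ref{prop:|S|_p'=|H|_p'} reduces to the two exceptional pairs from the simple order theorem, and Lemma \ref{lem:O2n+1andPSp} kills the $\{\Omega_{2n+1}(q),PSp_{2n}(q)\}$ family. One small remark: you are actually more complete than the published proof, which only cites Lemma \ref{lem:O2n+1andPSp} and leaves the $\{PSL_4(2),PSL_3(4)\}$ pair implicit (it is dispatched by the same \texttt{ATLAS} check already carried out in the proof of Theorem \ref{thm:main1repeated}); your explicit observation that $|S|=|H|$ turns $\cod(S)\subseteq\cod(H)$ into $\cd(S)\subseteq\cd(H)$, together with $7\in\cd(PSL_4(2))\setminus\cd(PSL_3(4))$ and $63\in\cd(PSL_3(4))\setminus\cd(PSL_4(2))$, closes that case cleanly.
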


\begin{proof}
By Lemma \ref{lem-chi-p-power} and Propositions \ref{prop:same-char}
and \ref{prop:|S|_p'=|H|_p'}, we have that $S$ and $H$ fall into one
of two pairs of groups concluded in Proposition
\ref{prop:|S|_p'=|H|_p'}. The result now follows by Lemma
\ref{lem:O2n+1andPSp}.
\end{proof}


\section{Theorem~\ref{thm:main4}: The mixed case of alternating groups and groups of Lie
type}\label{sec:theoremD-mixed-case}

In this section, we prove Theorem~\ref{thm:main4} in the mixed
situation where the set of codegrees of an alternating group $S$ is
contained in that of a simple group $H$ of Lie type, or vice versa.


In the following proposition we remark that the condition on $m$  is
necessary, due to the coincidences of isomorphic simple groups:
$A_5\cong PSL_2(4)\cong PSL_2(5)$, $A_6\cong PSL_2(9)$, and
$A_8\cong PSL_4(2)$. We also recall that $f(X):=|X|/b(X)$, where
$b(X)$ is the largest character degree of $X$.

\begin{prop}\label{prop:mixed case I}
Suppose $m=7$ or $m\geq 9$. Let $H$ be a simple group of Lie type.
If $|\Al_m|$ divides $|H|$, then $f(H)>f(\Al_m)$. As a consequence,
$\cod(\Al_m)\nsubseteq \cod(H)$.
\end{prop}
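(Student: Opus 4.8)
The plan is to prove the quantitative statement $f(H) > f(\Al_m)$ under the hypothesis $|\Al_m| \mid |H|$ (with $m = 7$ or $m \geq 9$); the consequence $\cod(\Al_m) \nsubseteq \cod(H)$ then follows immediately from Lemma~\ref{lem:fS geq fH}, since $\cod(\Al_m) \subseteq \cod(H)$ would force $f(\Al_m) \geq f(H)$. To control $f(\Al_m) = |\Al_m|/b(\Al_m)$ from below, I would use the upper bound $b(\Al_m) \leq e^{-0.11565\sqrt{m}}\sqrt{m!}$ from Lemma~\ref{lem:bounds-for-b(S)}(i), which gives
\[
f(\Al_m) = \frac{|\Al_m|}{b(\Al_m)} \geq \frac{m!/2}{e^{-0.11565\sqrt{m}}\sqrt{m!}} = \tfrac{1}{2}e^{0.11565\sqrt{m}}\sqrt{m!},
\]
so $f(\Al_m)$ is roughly of size $\sqrt{m!}$, growing super-exponentially in $m$. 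Meanwhile $f(H) = |H|/b(H)$, and for $H$ of Lie type in characteristic $p$ we have $b(H) < c\,(\log)^{O(1)}\cdot |H|_p$ by Theorem~\ref{lem:exceptional-b(S)} and Lemma~\ref{lem:bounds-for-b(S)}(iii)--(vi), hence $f(H) > |H|_{p'}/(c\,(\log)^{O(1)})$. Combining with Lemma~\ref{lem:aa} ($|H| < (|H|_{p'})^2$, so $|H|_{p'} > \sqrt{|H|} \geq \sqrt{|\Al_m|} = \sqrt{m!/2}$), I get a lower bound for $f(H)$ of essentially the same order $\sqrt{m!}$ as the upper bound for... wait, as the lower bound for $f(\Al_m)$, so the comparison is genuinely tight and needs care.

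The key steps I would carry out, in order: (1) split into cases by Lie type of $H$ — exceptional, then each classical family $PSL_n(q)$, $PSU_n(q)$, $PSp_{2n}(q)$, $\Omega_{2n+1}(q)$, $P\Omega_{2n}^\pm(q)$ — using the appropriate part of Lemma~\ref{lem:bounds-for-b(S)} or Theorem~\ref{lem:exceptional-b(S)} to write $b(H) < C_H \cdot L_H \cdot |H|_p$ where $C_H$ is an absolute constant and $L_H$ is a logarithmic factor in the rank; (2) deduce $f(H) > |H|_{p'}/(C_H L_H) \geq \sqrt{|H|}/(C_H L_H) \geq \sqrt{m!/2}/(C_H L_H)$ via Lemma~\ref{lem:aa} and $|\Al_m| \mid |H|$; (3) compare against $f(\Al_m) \geq \tfrac12 e^{0.11565\sqrt m}\sqrt{m!}$ — the $e^{0.11565\sqrt m}$ factor from step (0) must beat the product $2 C_H L_H$, which forces me to bound the rank of $H$ in terms of $m$. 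The rank bound is where the divisibility hypothesis does real work: since $|\Al_m| \mid |H|$, the order $|H|$ is at least $m!/2$, and for a group of Lie type of rank $r$ over $\F_q$ one has $|H| \leq q^{O(r^2)}$ with $q \geq 2$, so $r^2 \gtrsim \log(m!)/\log q \gtrsim m\log m$; but I actually need an upper bound on $r$ (equivalently a lower bound on $q$ or a trade-off), which I would extract by noting that the logarithmic factor $L_H$ is at most $O(\log r)$ while $e^{0.11565\sqrt m}$ is genuinely exponential in $\sqrt m$ — so the inequality $e^{0.11565\sqrt m} > 2C_H \cdot O(\log r)$ holds as soon as $\sqrt m$ dominates $\log\log|H|$, which it does since $r \leq \log_2|H|$ gives $\log r \leq \log\log_2|H|$ and $|H|$ can be bounded polynomially in terms of $m$ only after... hmm, actually $|H|$ has no a priori upper bound in terms of $m$.

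Let me restate step (3) more carefully: the clean way is to keep everything in terms of $|H|$ and $|H|_{p'}$, not $m$. From $f(H) > |H|_{p'}/(C_H L_H)$ and $f(\Al_m) \geq \tfrac12 e^{0.11565\sqrt m}\sqrt{m!}$, and the trivial $|H|_{p'} \geq |\Al_m|_{p'} = (m!/2)_{p'}$... no, divisibility of the full order does not give divisibility of $p'$-parts in the wrong direction. Instead I use $|H|_{p'} \geq \sqrt{|H|}$ (Lemma~\ref{lem:aa}) and $|H| \geq |\Al_m| = m!/2$, giving $f(H) > \sqrt{m!/2}/(C_H L_H)$. The logarithmic factor: $L_H = O((\log_q(\text{rank}))^{2.54})$ or similar, and since the rank $r$ satisfies $q^{r} \leq |H|_p \leq |H| $ loosely but more usefully $r \le \log_2 |H|$, while $|H| \le (|H|_{p'})^2$ and we have no upper bound — so I instead bound $r$ directly: if $|\Al_m| \mid |H|$ then $|H| \geq m!/2$, but also a group of Lie type of rank $r$ has order at least roughly $2^{r^2}$, placing no constraint. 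The honest resolution is that $L_H$ grows only polylogarithmically in $|H|$ whereas I am comparing $f(H) \gtrsim \sqrt{|H|}/\mathrm{polylog}(|H|)$ against $f(\Al_m) \gtrsim \sqrt{m!}\, e^{0.11565\sqrt m}$, and since $|H| \geq m!/2$ we get $f(H) \gtrsim \sqrt{m!}/\mathrm{polylog}(m!)$; the extra exponential factor $e^{0.11565\sqrt m}$ in $f(\Al_m)$ is what must overcome $\mathrm{polylog}(m!) = \mathrm{poly}(m\log m)$, and indeed $e^{0.11565\sqrt m}$ is eventually larger than any polynomial in $m$ — but NOT for small $m$, so the genuine content is a finite check for small $m$ (say $m \leq$ some explicit bound) combined with the asymptotic argument. \textbf{The main obstacle} is precisely this: making the constant $0.11565$ and the explicit constants $13, 2, 38, 8, 256, 26$ in the degree bounds fight each other cleanly, pinning down the threshold $m_0$ beyond which the asymptotics win, and then disposing of $7 \leq m < m_0$ by hand — checking for each such $m$ that no Lie type group $H$ with $|\Al_m| \mid |H|$ violates $f(H) > f(\Al_m)$, which requires knowing or bounding the (finitely many relevant) $H$ and their largest character degrees.
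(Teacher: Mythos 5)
Your high-level plan (split on the Lie type of $H$, feed the explicit bounds for $b$ into $f$, close with Lemma~\ref{lem:fS geq fH}) is the right one, but there are two concrete errors in the execution, and they are fatal rather than cosmetic.

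\textbf{Direction of the bound on $f(\Al_m)$.} You set out to ``control $f(\Al_m)$ from below'' by using the \emph{upper} bound $b(\Al_m)\leq e^{-0.11565\sqrt m}\sqrt{m!}$, getting $f(\Al_m)\gtrsim e^{0.11565\sqrt m}\sqrt{m!}$. But to prove $f(H)>f(\Al_m)$ you need an \emph{upper} bound on $f(\Al_m)$, i.e.\ a \emph{lower} bound on $b(\Al_m)$, namely $b(\Al_m)\geq \tfrac12 e^{-1.28255\sqrt m}\sqrt{m!}$, which yields $f(\Al_m)\leq e^{1.28255\sqrt m}\sqrt{m!}$. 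Two lower bounds cannot be compared, and the ``exponential factor'' $e^{0.11565\sqrt m}$ you hoped would save the day actually sits on the wrong side and would make $f(\Al_m)$ look larger, not smaller. The paper uses the correct direction: $f(\Al_m)\leq e^{1.28255\sqrt m}(m!)^{1/2}$ appears explicitly in the classical case.

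\textbf{You discard the step that makes the argument work.} You write that ``divisibility of the full order does not give divisibility of $p'$-parts in the wrong direction,'' and fall back to $|H|_{p'}>\sqrt{|H|}\geq\sqrt{m!/2}$. This rejection is simply wrong: if $a\mid b$ then $a_{p'}\mid b_{p'}$ for every prime $p$, so $|\Al_m|\mid|H|$ does give $|\Al_m|_{p'}\leq |H|_{p'}$. This is precisely the inequality the paper uses, and it is what saves the comparison: it yields $f(H)>|\Al_m|_{p'}/(C\,L)$, and the target $f(H)>f(\Al_m)$ becomes equivalent to $b(\Al_m)\geq C\,L\cdot|\Al_m|_p$. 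Since $|\Al_m|_p\leq|\Al_m|_2\leq 2^{m-2}$ (for $m\geq 10$) while $b(\Al_m)$ grows like $\sqrt{m!}$, this is a comparison one wins easily (the paper anchors at $b(\Al_{19})$ and inducts via Lemma~\ref{lem:bounds-for-b(S)}(ii), then checks small $m$ directly). By contrast, your fallback $\sqrt{m!/2}/(C\,L)$ never catches up with the $e^{1.28255\sqrt m}\sqrt{m!}$ upper bound for $f(\Al_m)$ --- the ratio tends to $0$ --- so the asymptotics do not eventually win; the approach fails for \emph{all} $m$, not just small ones. The bound $f(H)>\sqrt{|H|}$ is still useful in the paper, but only as a preliminary reduction (to rule out $|H|$ very large and thereby bound the rank of $H$ in terms of $m$); the main comparison must go through the $p'$-part divisibility.
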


\begin{proof}
Let $p$ be the defining characteristic and $q$, a power of $p$, the
order of the underlying field of $H$. Consider first the case $H$
being of exceptional type. Using Lemma \ref{lem:exceptional-b(S)},
we have
\[
f(H)> \frac{|H|}{256|H|_p}=\frac{1}{256}|H|_{p'}.
\]
As $|\Al_m|$ divides $|H|$, it follows that $ f(H)>
(1/256)|\Al_m|_{p'}.$ Therefore, to prove the theorem, it suffices
to show \[b(\Al_m)\geq 256|\Al_m|_p.\] Let us assume for now that
that $m\geq 10$. In particular, the dominant prime in $|\Al_m|=m!/2$
is $2$. We therefore just need to show $b(A_m)\geq 256|\Al_m|_2$. As
$|\Al_m|_2\leq 2^{m-2}$, for this we want to show
\[
b(\Al_m)\geq 64\cdot2^m.
\]
Note that $b(\Al_{19})=64,664,600>64\cdot 2^{19}$ (see \cite{McK86}
for the degree of the largest irreducible characters and associated
partitions of symmetric groups of degree up to 75, from which one
can deduce the exact value or a good bound for the one of
corresponding alternating groups). Now one just inducts on $m$ with
the help of Lemma \ref{lem:bounds-for-b(S)}(ii) to achieve the
desired bound for $m\geq 19$.

Suppose that $m\leq 18$ and recall that we are still dealing with
exceptional groups. When $q=2$, the proposition can be verified
directly, so assume that $q\geq 3$. In such case, $b(H)<26|H|_p$ by
Lemma \ref{lem:exceptional-b(S)}, and whence the above estimate can
be refined so that we only need to prove $b(\Al_m)\geq 26|\Al_m|_p$,
which turns out to be true for all $18\geq n\geq 13$. For the
remaining values $m\leq 12$, the arguments go as follows. First we
are done if $f(\Al_m)\leq \sqrt{|H|}$, as $f(H)>\sqrt{|H|}$, so we
may assume that $|H|<f(\Al_m)^2$. For each $m\leq 12$, there are
indeed no possibilities for $H$ satisfying $|H|<f(\Al_m)^2$ and
$|A_m|$ divides $|H|$.


Following the same idea as in the case of exceptional groups, but
using Lemma~\ref{lem:bounds-for-b(S)} instead, we can show that in
fact $f(H)>f(\Al_m)$ for every $H$ of classical type and $m\geq 19$.
Let us present the details for only the linear groups.

Consider $H=PSL_n(q)$ for some $n\geq 2$ and $q$ a prime power. By
Lemma \ref{lem:bounds-for-b(S)}(i), we have
\[f(\Al_m)=\frac{m!}{2b(\Al_m)}\leq e^{1.28255\sqrt{m}}(m!)^{1/2}.\]
Thus, if $|H|\geq e^{2.5651\sqrt{m}}m!$ then $f(H)>\sqrt{|H|}\geq
e^{1.28255\sqrt{m}}(m!)^{1/2}\geq f(\Al_m)$ and we would be done. We
therefore can assume that $|H|< e^{2.5651\sqrt{m}}m!$, which in
particular implies that $n<m$. Using Lemma
\ref{lem:bounds-for-b(S)}(iii), we see that, as before, it is enough
to show that $b(\Al_m)\geq 13(1+\log_q(n+1))^{2.54}|\Al_m|_p$. Since
$m\geq n+1$ and $|\Al_m|_p\leq 2^{m-2}$, for this it is sufficient
to show that
\[
b(\Al_m)\geq 13(1+\log_2m)^{2.54}2^{m-2}.
\]
This last inequality is indeed true for $m=20$, and therefore is
true for all $m\geq 20$, by induction and Lemma
\ref{lem:bounds-for-b(S)}(ii). Checking directly, we see that the
inequality $b(\Al_m)\geq 13(1+\log_q(n+1))^{2.54}|\Al_m|_2$ is still
valid for $n=19$.

As for the exceptional types, we are left to consider the small
cases $m\leq 18$. Again we are done if $f(\Al_m)\leq \sqrt{|H|}$, so
we may assume that $|H|<f(\Al_m)^2$. For each $m$, we search for
relevant $H$ satisfying $|H|<f(\Al_m)^2$ and $|A_m|$ divides $|H|$
and find that, for such an $H$, the inequality $f(H)>f(\Al_m)$
always holds true.
\end{proof}


We shall need the following result on $2$-defect zero and $3$-defect
zero characters of alternating groups, which easily follows from
earlier work of F. Garvan, D. Kim and D. Stanton \cite{GKS90} on the
so-called \emph{$p$-core partitions}. They are  partitions having no
hook lengths divisible by $p$. Using Garvan-Kim-Stanton's result, A.
Granville and K. Ono \cite{Granville-Ono} proved the existence of
$p$-defect zero characters with $p\geq 5$ in symmetric and
alternating groups.

\begin{lem}\label{lem:defect zero Am}
Let $m$ be a positive integer.
\begin{enumerate}[\rm(i)]
\item $\Al_m$ has a $2$-defect zero irreducible character if and only
if $m=2k^2+k$ or $m=2k^2+k+2$ for some $k\in\N$.

\item $\Al_m$ has a $3$-defect zero irreducible character if and only
if there is a prime $\ell\equiv 2(\bmod 3)$ such that the the exact
power of $\ell$ dividing $3m+ 1$ is odd.
\end{enumerate}
\end{lem}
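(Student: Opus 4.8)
The plan is to translate the existence of a $p$-defect zero irreducible character of $\Al_m$ into a statement about $p$-core partitions, and then to invoke the combinatorial classification of such partitions due to Garvan--Kim--Stanton \cite{GKS90}. Recall that an irreducible character of the symmetric group $\Sy_m$ indexed by a partition $\lambda$ of $m$ has $p$-defect zero precisely when $\lambda$ is a $p$-core, i.e.\ no hook length in the Young diagram of $\lambda$ is divisible by $p$; moreover, when such a $\lambda$ exists it restricts irreducibly to $\Al_m$ (a $p$-core partition is never self-conjugate for $p=2$ or $p=3$, so the restriction does not split), and this restricted character still has $p$-defect zero in $\Al_m$. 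Conversely, every $p$-defect zero character of $\Al_m$ lies under a $p$-defect zero character of $\Sy_m$, hence arises this way. So in both parts the task reduces to: \emph{$\Al_m$ has a $p$-defect zero character if and only if $m$ admits a $p$-core partition.}

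For part (i), the number of $2$-core partitions of $m$ is classical: a $2$-core partition is a staircase $(k, k-1, \dots, 1)$, which has size $\binom{k+1}{2} = (k^2+k)/2$. Hence $m$ has a $2$-core if and only if $m$ is a triangular number $(k^2+k)/2$... wait --- here one must be careful with the normalization. The correct statement, which I would extract directly from \cite{GKS90} or from the theory of the $2$-core, is that $24m + 1$ (or the appropriate shift) is a perfect square; unwinding the arithmetic gives exactly the two residue families $m = 2k^2+k$ and $m = 2k^2 + k + 2$ claimed (the shift by $2$ corresponds to the two ways a staircase can be decorated, or equivalently to the generating-function identity for $2$-cores). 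I would present this as: the generating function for the number of $2$-core partitions is $\sum_m a_2(m) q^m = \prod_{n\geq 1}(1-q^{2n})/(1-q^n) = \sum_{k\geq 0} q^{k(k+1)/2}$ times the appropriate correction, and then read off the support. For part (ii), Garvan--Kim--Stanton show that the number of $3$-core partitions of $m$ equals the number of divisors $d$ of $3m+1$ with $d \equiv 1 \pmod 3$ minus the number with $d \equiv 2 \pmod 3$, and this difference is positive if and only if in the prime factorization of $3m+1$ every prime $\ell \equiv 2 \pmod 3$ occurs to an even power --- equivalently, $a_3(m) = 0$ iff some prime $\ell \equiv 2 \pmod 3$ divides $3m+1$ to an odd power. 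Negating gives precisely the stated criterion.

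The main obstacle I anticipate is \textbf{not} the hard combinatorics --- that is entirely contained in \cite{GKS90} --- but rather the bookkeeping at two junctures: first, verifying cleanly that the passage from $\Sy_m$ to $\Al_m$ preserves $p$-defect zero and does not introduce a splitting (this needs the observation that $2$-cores and $3$-cores are never self-conjugate, which one checks from the staircase description for $p=2$ and from a short hook-length argument for $p=3$); and second, matching the exact arithmetic normalization in \cite{GKS90} (their identities are typically phrased in terms of $24m+1$, $3m+1$, etc.) with the two explicit quadratic families $m = 2k^2+k$, $m=2k^2+k+2$ in part (i). I would handle the latter by completing the square: a non-negative integer $m$ lies in one of these two families iff $8m+1$ or $8m-15$ is an odd square, and I would reconcile that with the generating-function support. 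Since the lemma is stated as a known consequence of \cite{GKS90} and \cite{Granville-Ono}, I expect the write-up to be short, mostly citation plus the two short verifications above.
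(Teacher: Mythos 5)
Your plan for part (i) has a genuine gap. You assert that a $2$-core partition is never self-conjugate, but exactly the opposite holds: every $2$-core is a staircase $(j,j-1,\dots,1)$, which is always self-conjugate, so the corresponding character of $\Sy_m$ always splits on $\Al_m$. Moreover $|\Al_m|_2=|\Sy_m|_2/2$, so the notion of $2$-defect zero shifts by one in passing from $\Sy_m$ to $\Al_m$. The true picture is that $\Al_m$ acquires a $2$-defect zero character in two ways: (a) if $m=j(j+1)/2$ is triangular, the two halves of $\chi^{(j,j-1,\dots,1)}$ have degree with $2$-part $|\Sy_m|_2/2=|\Al_m|_2$; and (b) if $m-2=j(j+1)/2$ is triangular, the two conjugate, non-self-conjugate partitions of $m$ of $2$-weight one restrict irreducibly with degree of $2$-part $|\Sy_m|_2/2$ (a $2$-weight-one partition has exactly one hook of even length, so $\nu_2(H(\lambda))=1$). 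Your reduction to ``$m$ admits a $2$-core,'' i.e.\ ``$m$ is triangular,'' catches only case (a) and would never produce the ``$+2$'' family, so the arithmetic you were hoping to reconcile at the end cannot come out. (Incidentally, with $k\in\N$ the printed formula only covers $T_{2k}$ and $T_{2k}+2$; it already fails at $m=6$ and $m=15$, where $\Al_m$ manifestly has $2$-defect zero characters, so $k$ should range over $\Z$, equivalently ``$m$ or $m-2$ is triangular.'')

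For part (ii) your reduction to ``$m$ admits a $3$-core'' is correct, but not for the reason you give: $3$-cores can be self-conjugate, e.g.\ $(3,1,1)$ for $m=5$. The right argument is simply that $[\Sy_m:\Al_m]=2$ is coprime to $3$, so any constituent of the restriction of a $3$-defect zero character of $\Sy_m$ retains the full $3$-part of the degree. Also, the criterion you correctly extract from Garvan--Kim--Stanton --- a $3$-core exists iff every prime $\ell\equiv 2\pmod 3$ divides $3m+1$ to an even power --- is the \emph{negation} of what part (ii) literally states, not a match; the paper's own application (which keeps $m=10,12,21,36$ and discards $m=23,38$) confirms that your version is the intended one, so this is a sign error in the lemma that you should have flagged rather than asserting agreement. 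Finally, note that the paper's ``proof'' of this lemma is merely the citation to Granville--Ono, so none of the details you attempted appear in the source.
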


\begin{proof}
See the discussion in \cite[pp. 333-334]{Granville-Ono}.
\end{proof}

\begin{prop}\label{prop:mixed case II}
Let $S$ be a simple group of Lie type and $8\neq m\geq 7$ an
integer. Then $\cod(S)\nsubseteq \cod(\Al_m)$. In fact, if $|S|$
divides $|\Al_m|$ and $m\geq 44$, then $f(S)<f(\Al_m)$.
\end{prop}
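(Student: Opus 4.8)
The first assertion follows from the second together with a finite check. If $\cod(S)\subseteq\cod(\Al_m)$, then $|S|$ divides $|\Al_m|$ by Theorem~\ref{thm-|S|divides|G|} and $f(S)\ge f(\Al_m)$ by Lemma~\ref{lem:fS geq fH}; for $m\ge 44$ this contradicts the displayed inequality $f(S)<f(\Al_m)$. For $7\le m\le 43$ with $m\neq 8$, the bound $|S|\le|\Al_m|$ leaves only finitely many simple groups $S$ of Lie type (the divisibility $|S|\mid m!/2$ is very restrictive — in particular it forces the defining field to be small via the smoothness of the relevant cyclotomic factors of $|S|$), and for each such $S$ one checks $\cod(S)\not\subseteq\cod(\Al_m)$ directly from the character degrees, using \cite{Conway} and standard degree formulas: writing $x:=|\Al_m|/|S|$, it suffices to produce one $1\neq d\in\cd(S)$ with $xd\notin\cd(\Al_m)$. (The isomorphisms $\Al_5\cong PSL_2(4)\cong PSL_2(5)$, $\Al_6\cong PSL_2(9)$ and $\Al_8\cong PSL_4(2)$ explain the exclusion $m\neq 8$ and why only ``$\not\subseteq$'', not $f(S)<f(\Al_m)$, is asserted in this range.) So from now on let $m\ge 44$, let $S$ be of Lie type over $\F_q$ with $q=p^t$, and assume $|S|\mid|\Al_m|=m!/2$; we must prove $f(S)<f(\Al_m)$.

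Paralleling Proposition~\ref{prop:mixed case I} with the roles of the two groups reversed, we bound $f(S)$ from above and $f(\Al_m)$ from below. Since $b(S)\ge\St_S(1)=|S|_p$, we have $f(S)=|S|/b(S)\le|S|_{p'}$. On the other hand, Lemma~\ref{lem:bounds-for-b(S)}(i) gives $b(\Al_m)\le e^{-0.11565\sqrt m}\sqrt{m!}$, so for $m\ge 36$ (in particular for $m\ge 44$) one has $b(\Al_m)<\tfrac12\sqrt{m!}$, whence
\[
f(\Al_m)=\frac{m!/2}{b(\Al_m)}\ >\ \sqrt{m!}.
\]
Therefore it is enough to prove the two inequalities $|S|_{p'}<4^m$ and $4^m\le\sqrt{m!}$.

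The second is immediate from the weak Stirling bound $m!\ge(m/e)^m$: it gives $\sqrt{m!}\ge(m/e)^{m/2}$, and $(m/e)^{m/2}\ge 4^m$ is equivalent to $m/e\ge 16$, i.e.\ to $m\ge 44$. For the first, write $N$ for the number of positive roots and $r$ for the rank of the ambient algebraic group. Since the centre of the associated simply connected finite group of Lie type has order prime to $p$, one has $|S|_p=q^N$ exactly (the degree of the Steinberg character), and Lemma~\ref{lem:order-bound} gives $|S|_{p'}\le q^{N+r}$. Now $q^N=|S|_p$ divides $(m!)_p$, and $(m!)_p=p^{\sum_{i\ge1}\lfloor m/p^i\rfloor}<p^{m/(p-1)}$, so
\[
q^N\ <\ p^{\,m/(p-1)}\ =\ \bigl(p^{1/(p-1)}\bigr)^{m}\ \le\ 2^m ,
\]
because $p\mapsto p^{1/(p-1)}$ is maximal at $p=2$. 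As $r\le N$ (the number of positive roots is at least the rank), it follows that
\[
|S|_{p'}\ \le\ q^{N+r}\ =\ q^N\cdot q^r\ \le\ (q^N)^2\ <\ 4^m .
\]
Combining the estimates, $f(S)\le|S|_{p'}<4^m\le\sqrt{m!}<f(\Al_m)$, as required.

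The argument above is elementary; the only thing needing care is the numerical cutoff. The bound $|S|_{p'}<4^m$ is uniform and costs essentially nothing, and $p=2$ is the extremal prime; the value $m=44=\lceil 16e\rceil$ is precisely the point at which $4^m$ drops below $(m/e)^{m/2}\le\sqrt{m!}\le f(\Al_m)$. I expect the most laborious part to be not these inequalities but the finite verification for $7\le m\le 43$ with $m\neq 8$: there one must first list, for each such $m$, the simple groups $S$ of Lie type with $|S|$ dividing $m!/2$ (using the restrictiveness of the divisibility to cap the field size — Zsigmondy and direct factorisation suffice here, since only finitely many $m$ are involved), and then carry out the codegree comparison $\cod(S)\not\subseteq\cod(\Al_m)$ group by group.
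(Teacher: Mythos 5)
Your argument for $m\ge 44$ is correct in substance and takes a route parallel to, but not identical with, the paper's. The paper gets the inequality $f(S) < |S|_p^2$ from $f(S)\le |S|_{p'}$ together with the cited fact $|S|_{p'}<|S|_p^2$ for simple groups of Lie type (the proof of \cite[Thm.~12]{Cossey}), and then uses the trivial lower bound $f(\Al_m)>\sqrt{m!/2}$, which costs a few extra powers of two but still gives the cutoff $m\ge 44$. You instead derive $|S|_{p'}\le q^{N+r}\le (q^N)^2$ directly from Lemma~\ref{lem:order-bound} together with $r\le N$, and compensate on the other side by invoking the genuine Vershik--Kerov upper bound on $b(\Al_m)$ from Lemma~\ref{lem:bounds-for-b(S)}(i) to get $f(\Al_m)>\sqrt{m!}$. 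Both strategies work; yours is conceptually a bit cleaner (the estimate $(m!)_p<p^{m/(p-1)}\le 2^m$ requires no assumption that $2$ dominates $|\Al_m|$). Two small caveats you should add: the identity $|S|_p=q^N$ fails for the Tits group ${}^2F_4(2)'$ (where $|S|_2 = q^N/2$), and for Suzuki and Ree groups the parameter $q$ in Lemma~\ref{lem:order-bound} is the (irrational) absolute value of the Frobenius eigenvalues rather than the order of a field, so your phrasing ``over $\F_q$ with $q=p^t$'' should be adjusted; neither affects the final inequality, but they need to be acknowledged. This is exactly the kind of nuisance the citation to \cite{Cossey} sidesteps.

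The genuine gap is in the range $7\le m\le 43$, $m\neq 8$. You dismiss this as ``a finite check'' to be carried out ``group by group,'' but that check is far from routine: one would have to enumerate, for each of some thirty-odd values of $m$, all simple groups $S$ of Lie type with $|S|$ dividing $m!/2$, and then for each pair compare codegree sets against $\Al_m$, whose character degrees are themselves nontrivial to tabulate for $m$ in the forties. The paper avoids essentially all of this with a short block-theoretic observation that you miss. Every simple group of Lie type has a $2$-defect zero and a $3$-defect zero irreducible character (Michler, Willems), hence has a codegree coprime to $2$ and a codegree coprime to $3$; if $\cod(S)\subseteq\cod(\Al_m)$, the same must hold for $\Al_m$. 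By Lemma~\ref{lem:defect zero Am}(i), $\Al_m$ has an odd codegree only when $m=2k^2+k$ or $m=2k^2+k+2$, which for $7\le m\le 43$, $m\neq 8$, forces $m\in\{10,12,21,23,36,38\}$; and Lemma~\ref{lem:defect zero Am}(ii) applied to $3$-defect zero characters cuts this to $m\in\{10,12,21,36\}$. Only then does the paper do a short direct check ($m=21,36$ fall to a refined estimate, and $m=10,12$ to a two-line computation with $2$- and $3$-defect zero degrees). Without the defect-zero reduction, your proposed finite verification would be an order of magnitude more work and you have not actually performed it, so the first assertion of the proposition remains unproved in your writeup.
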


\begin{proof}
Assume by contradiction that $\cod(S)\subseteq \cod(\Al_m)$. By
Lemma~\ref{lem:fS geq fH}, we then have $f(S)\geq f(\Al_m)$.

Suppose that the defining characteristic of $S$ is $p$. Observe that
$f(S)\leq |S|/\St_S(1)=|S|_{p'}$. Furthermore, $|S|_{p'}<|S|_p^2$
(see \cite[Proof of Thm. 12]{Cossey}) and $|S|_p\leq |\Al_m|_p$ by
Theorem~\ref{thm-|S|divides|G|}. Therefore we have $f(S)<
(|\Al_m|_p)^2$. Assume for a moment that $m\geq 10$ so that
$|\Al_m|_p\leq |\Al_m|_2\leq 2^{m-2}$. We now have $f(S)< 2^{2m-4}$.
On the other hand, it is clear that $f(\Al_m)>\sqrt{m!/2}$.
Therefore, we would be done if $m!\geq 2^{4m-7}$. By the well-known
estimate $m!>\sqrt{2\pi m}(m/e)^m$, this is certainly true when
$m\geq44$. So we may now suppose that $m\leq 43$.

As mentioned above, every simple group of Lie type, and therefore
$S$ in particular, has a $2$-defect zero irreducible character,
which means that $S$ has an odd codegree and so does $\Al_m$ as
$\cod(S)\subseteq \cod(\Al_m)$. It follows that $m=2k^2+k$ or
$m=2k^2+k+2$ for some $k\in\N$, by Lemma~\ref{lem:defect zero
Am}(i). This forces $m$ to be one of $10, 12, 21, 23, 36$, or $38$.
By the same reason, $\Al_m$ has a codegree not divisible by $3$ and
so Lemma~\ref{lem:defect zero Am}(ii) further narrows down the
choices for $m$: $m\in\{10,12,21,36\}$. In fact, when $m=21$ or
$36$, we still have $f(\Al_m)> |\Al_m|_2^2$, and since
$|\Al_m|_2^2>f(S)$, it follows that $f(\Al_m)>f(S)$, which is a
contradiction.

Suppose $m=10$. The inequality $f(\Al_m)<(|\Al_m|_p)^2$ then forces
$p=2$ or $3$. If $p=2$ then $|S|_{2'}=|\Al_{10}|/\chi(1)$, where
$\chi\in\irr(\Al_{10})$ is one of the two $2$-defect zero
irreducible characters of equal degree $384$, implying
$|S|_{2'}=10!/(2\cdot 384)=4725$. It is easy to see from
\cite{Conway} that there is no such group of Lie type in
characteristic $2$. If $p=3$ then $|S|_{3'}=10!/(2\cdot 567)=3200$
since $\Al_{10}$ has a unique $3$-defect zero character of degree
$567$, which again leads to a contradiction as there is no such
group in characteristic $3$. The case $m=12$ is treated similarly
and we skip the details.
\end{proof}


\section{Theorem~\ref{thm:main4}: Alternating and sporadic
groups}\label{sec:theoremD-alternating-sporadic}

\begin{prop}\label{prop:alternating}
Let $m<n$ be positive integers. Then $f(\Al_m)<f(\Al_n)$.
Consequently, $\cod(\Al_m)\nsubseteq \cod(\Al_{m+1})$.
\end{prop}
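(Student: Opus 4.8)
The claim has two parts: the monotonicity $f(\Al_m)<f(\Al_n)$ for all $m<n$, and the special consequence $\cod(\Al_m)\nsubseteq\cod(\Al_{m+1})$. The second follows from the first together with Lemma~\ref{lem:fS geq fH}: if $\cod(\Al_m)\subseteq\cod(\Al_{m+1})$ then $f(\Al_m)\geq f(\Al_{m+1})$, contradicting $f(\Al_m)<f(\Al_{m+1})$ — but one must first note that the small-index coincidences ($\Al_5\cong PSL_2(4)$, etc.) do not interfere, since here both groups are genuinely alternating and the inequality is what we are proving. So the whole weight is on establishing $f(\Al_m)<f(\Al_n)$, and by transitivity it suffices to prove $f(\Al_m)<f(\Al_{m+1})$ for every $m$.

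Recall $f(\Al_m)=|\Al_m|/b(\Al_m)=m!/(2b(\Al_m))$, so the inequality $f(\Al_m)<f(\Al_{m+1})$ is equivalent to
\[
(m+1)\,b(\Al_m) > b(\Al_{m+1}).
\]
This is exactly the shape controlled by Lemma~\ref{lem:bounds-for-b(S)}(ii), which gives the \emph{lower} bound $b(\Al_{m+1})\geq \tfrac{2(m+1)}{\sqrt{8m+1}+3}\,b(\Al_m)$ — unfortunately the wrong direction. So first I would prove the matching \emph{upper} bound by the same branching-rule argument used there: if $\chi\in\irr(\Sy_m)$ has $\chi(1)=b(\Sy_m)$ with partition $\lambda$, then $\chi^{\Sy_{m+1}}$ is a sum of irreducible characters of $\Sy_{m+1}$, each of degree at most $b(\Sy_{m+1})$, and the number of addable nodes of $\lambda$ is at most $(\sqrt{8m+1}+1)/2$ (as cited from \cite[p.~1950]{HHN16}); on the other hand $\chi^{\Sy_{m+1}}(1)=(m+1)\chi(1)$. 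Combining, and then relating $b(\Sy_k)$ to $b(\Al_k)$ (they are equal, or differ by a factor of $2$, depending on whether the extremal partition is self-conjugate), one gets a bound of the form $b(\Al_{m+1}) \leq c\cdot\sqrt{m}\cdot b(\Al_m)$ for an explicit constant. That alone does not beat the factor $m+1$ we need, but it is not far off, and for large $m$ the factor $\sqrt{m}$ is dwarfed by $m+1$.

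To close the gap I would instead argue directly with the explicit two-sided bounds of Lemma~\ref{lem:bounds-for-b(S)}(i), which give
\[
\tfrac12 e^{-1.28255\sqrt{m}}\sqrt{m!}\leq b(\Al_m)\leq e^{-0.11565\sqrt{m}}\sqrt{m!}.
\]
Thus $(m+1)b(\Al_m)\geq \tfrac12(m+1)e^{-1.28255\sqrt{m}}\sqrt{m!}$ while $b(\Al_{m+1})\leq e^{-0.11565\sqrt{m+1}}\sqrt{(m+1)!}=e^{-0.11565\sqrt{m+1}}\sqrt{m+1}\sqrt{m!}$, so the desired inequality reduces to
\[
\tfrac12\sqrt{m+1}\,e^{-1.28255\sqrt{m}} > e^{-0.11565\sqrt{m+1}},
\]
i.e. $\tfrac12\sqrt{m+1}>e^{1.28255\sqrt m-0.11565\sqrt{m+1}}$. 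Taking logarithms, the right side grows like $1.28255\sqrt m$ while the left like $\tfrac12\log m$, so this \emph{fails} for all large $m$ — the crude estimates are too lossy. The main obstacle, therefore, is that neither cited lemma is individually strong enough, and I expect the real argument must combine them: use the \emph{ratio} form. Precisely, from part (i) applied at $m$ and $m+1$,
\[
\frac{b(\Al_{m+1})}{b(\Al_m)} \leq \frac{e^{-0.11565\sqrt{m+1}}\sqrt{(m+1)!}}{\tfrac12 e^{-1.28255\sqrt m}\sqrt{m!}} = 2\sqrt{m+1}\,e^{1.28255\sqrt m - 0.11565\sqrt{m+1}},
\]
which is still too weak. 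So the genuinely correct route is the branching bound of part (ii)'s proof used as an \emph{upper} bound — giving $b(\Al_{m+1})\leq \tfrac{\sqrt{8m+1}+3}{2}b(\Al_m)$ — and then observing $\tfrac{\sqrt{8m+1}+3}{2}<m+1$ for all $m\geq 2$, which holds since $\sqrt{8m+1}<2m-1$ i.e. $8m+1<4m^2-4m+1$ i.e. $4m^2>12m$ i.e. $m>3$; the cases $m\leq 3$ (where $\Al_m$ is trivial or cyclic, so $f$ is $1$, $1$, $3$ anyway) are checked by hand, as are any small $m$ where $b(\Al_m)\neq b(\Sy_m)$ forces a factor-of-$2$ correction — a finite check via \cite{McK86}/\cite{Conway}. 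Thus the clean statement of the needed ingredient is: $b(\Al_{m+1})\leq \tfrac{\sqrt{8m+1}+3}{2}\,b(\Al_m)$, proved by the branching rule exactly as in Lemma~\ref{lem:bounds-for-b(S)}(ii) but reading the inequality in the opposite direction, and the main obstacle is simply making sure the self-conjugate-partition bookkeeping in passing between $\Sy$ and $\Al$ does not cost more than the slack $m+1-\tfrac{\sqrt{8m+1}+3}{2}$ allows, which for all but the smallest $m$ it comfortably does.
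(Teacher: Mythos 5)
Your overall plan matches the paper's: reduce to the multiplicative recursion $(m+1)b(\Al_m)>b(\Al_{m+1})$, observe that the Vershik--Kerov two-sided estimates in Lemma~\ref{lem:bounds-for-b(S)}(i) are too lossy, and fall back on the branching rule. You are right that (i) alone cannot work, and right that (ii) has the wrong sense. But your pivotal step is not a proof. You obtain the needed upper bound ``$b(\Al_{m+1})\leq\tfrac{\sqrt{8m+1}+3}{2}\,b(\Al_m)$'' by proposing to ``read the inequality of Lemma~\ref{lem:bounds-for-b(S)}(ii) in the opposite direction,'' and there is no such operation. The argument underlying (ii) inducts a maximal-degree character of $\Sy_m$ \emph{up} to $\Sy_{m+1}$; this yields $(m+1)b(\Sy_m)\leq\tfrac{\sqrt{8m+1}+1}{2}\,b(\Sy_{m+1})$, which is intrinsically a \emph{lower} bound on $b(\Sy_{m+1})$ and cannot be flipped. (Earlier in your write-up you even claim this induction step gives an upper bound ``$b(\Al_{m+1})\leq c\sqrt{m}\,b(\Al_m)$''; that is the same directional error.)

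What is actually needed is to run the branching rule in the other direction: take $\chi\in\irr(\Al_{m+1})$ of maximal degree, invoke \cite[p.~1956]{HHN16} to find $\psi\in\irr(\Sy_{m+1})$ with non-self-conjugate partition $\lambda$ and $\psi_{\Al_{m+1}}=\chi$, so that $b(\Al_{m+1})=\psi(1)$, and then \emph{restrict} $\psi$ to $\Sy_m$. The number of constituents is the number of \emph{removable} nodes of $\lambda\vdash m+1$, hence at most $(\sqrt{8m+9}-1)/2$, each of degree at most $b(\Sy_m)$. This gives
\[
b(\Al_{m+1})=\psi(1)\leq\frac{\sqrt{8m+9}-1}{2}\,b(\Sy_m)<\bigl(\sqrt{8m+9}-1\bigr)\,b(\Al_m),
\]
using $b(\Sy_m)<2b(\Al_m)$ at the last step, and $\sqrt{8m+9}-1<m+1$ holds for $m\geq 5$ (with $m\leq 4$ checked directly). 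Note the resulting coefficient is $\sqrt{8m+9}-1$, not your $\tfrac{\sqrt{8m+1}+3}{2}$, which is tighter than anything the branching argument produces and which you never actually establish. So your inventory of ingredients is correct (branching rule plus the $\Sy$/$\Al$ factor-of-two bookkeeping), but the key step --- restrict down and count removable nodes, rather than ``invert'' the induction bound --- is missing.
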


\begin{proof}
It suffices to show that $b(\Al_{m+1})<(m+1)b(\Al_m)$. Let
$\chi\in\irr(\Al_{m+1})$ such that $\chi(1)=b(\Al_{m+1})$. As shown
in \cite[p. 1956]{HHN16}, such $\chi$ must be the restriction of an
irreducible character, say $\psi$, of $\Sy_{m+1}$ whose associated
partition, say $\lambda$, is not self-conjugate. In particular,
$\chi(1)=\psi(1)$. As in the proof on Lemma
\ref{lem:bounds-for-b(S)}(ii), let $Y_\lambda$ be the Young diagram
associated to $\lambda$. The restriction $\psi_{\Sy_m}$ of $\psi$ to
$\Sy_n$ is the sum of irreducible characters corresponding to the
partitions of $n$ whose associated Young diagrams are obtained from
$Y_\lambda$ by removing a suitable node. The number of those
suitable nodes is at most $(\sqrt{8m+9}-1)/2$, so
\[
b(\Al_{m+1})=\psi(1)\leq \frac{\sqrt{8m+9}-1}{2} b(\Sy_m).
\]
Since $b(\Sy_m)<2b(\Al_m)$ as already mentioned above, it follows
that
\[
b(\Al_{m+1})< (\sqrt{8m+9}-1) b(\Al_m),
\]
which implies our desired inequality $b(\Al_{m+1})<(m+1)b(\Al_m)$
for $m\geq 5$. The result is easily checked for smaller $m$.
\end{proof}

\begin{prop}\label{prop:sporadic}
Theorem \ref{thm:main4} is true when either $S$ or $H$ is a sporadic
simple group.
\end{prop}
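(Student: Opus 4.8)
The plan is to reduce the sporadic case to a finite verification, exploiting the fact that there are only $26$ sporadic groups and the rest of the machinery developed in the paper. We split into two sub-cases according to which of $S$, $H$ is sporadic.

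\smallskip

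\textbf{Case 1: $S$ is sporadic.} Here we assume $\cod(S)\subseteq\cod(H)$ and must show $S\cong H$. By Theorem~\ref{thm-|S|divides|G|}, $|S|$ divides $|H|$. If $H$ is also sporadic, then $|S|\mid|H|$ together with the known orders of the $26$ sporadic groups forces either $S\cong H$ or one of finitely many genuine divisibility pairs (e.g.\ $M_{11}\mid M_{12}$, $M_{11}\mid M_{23}$, $M_{22}\mid HS$, $M_{12}\mid\mathrm{Suz}$, $J_2\mid\mathrm{Suz}$, $M_{24}\mid Co_1$, etc.); for each such pair one exhibits a codegree of $S$ that is not a codegree of $H$ by inspecting the character tables in \cite{Conway} (equivalently, a degree $d\in\cd(S)$ with $|S|/d\notin\cod(H)$), ruling it out. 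If $H$ is alternating or of Lie type, we use $f(S)\ge f(H)$ from Lemma~\ref{lem:fS geq fH} together with $|S|\mid|H|$: since $|H|\ge|S|$ and $f(H)\le\sqrt{|H|}$ is false in general, we instead observe that $f(H)>\sqrt{|H|}$ fails only for small $H$, so $|H|<f(S)^2$ leaves finitely many candidates for $H$ with $|S|\mid|H|$, each of which is checked directly against the (short) list of codegrees of $S$. For $H$ of Lie type one may also invoke Lemma~\ref{lem-chi-p-power}: if $p\ge5$ divides $|S|$ then $S$ has a $p$-defect-zero character, forcing $|S|_{p'}=|H|_{p'}$, which is extremely restrictive and eliminates all large $H$ at once.

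\smallskip

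\textbf{Case 2: $H$ is sporadic (and $S$ is not).} Now $S$ is alternating or of Lie type with $\cod(S)\subseteq\cod(H)$, and since $|H|$ is fixed among $26$ possibilities, Theorem~\ref{thm-|S|divides|G|} gives $|S|\mid|H|$, leaving only finitely many $S$. By Corollary~\ref{thm-char} $S$ is perfect so $S$ is not abelian; by Proposition~\ref{prop:mixed case I}, \ref{prop:mixed case II}, \ref{prop:alternating} and Theorem~\ref{thm:Lie type} the only way $\cod(S)\subseteq\cod(H)$ can hold for $S$ alternating or of Lie type contained in another group of the same family is ruled out, so it remains to check $S$ against each sporadic $H$ of order a proper multiple of $|S|$. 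For each such pair $(S,H)$ one finds $1\neq d\in\cd(S)$ with $|S|/d\notin\cod(H)$ using the character tables in \cite{Conway}; by Lemma~\ref{lem-x=H/S} a particularly efficient obstruction is to note that $\cod(S)\subseteq\cod(H)$ would force $dx\in\cd(H)$ for $x=|H|/|S|$ and every $1\neq d\in\cd(S)$, and a single violation suffices.

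\smallskip

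The main obstacle is organizational rather than conceptual: assembling the list of pairs $(S,H)$ with one group sporadic, $|S|\mid|H|$, and neither an isomorphism nor already excluded by the earlier propositions, and then verifying for each that the codegree containment fails. The number of such pairs is modest — divisibility of orders is a strong constraint, and the results already proved (especially Lemma~\ref{lem-chi-p-power}, Proposition~\ref{prop:same-char}, and Theorem~\ref{thm:Lie type}) eliminate the vast majority of candidates before any table lookup — so the remaining check is a finite, routine computation with \cite{Conway}.
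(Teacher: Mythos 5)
Your high-level strategy — exploit $|S|\mid|H|$ from Theorem~\ref{thm-|S|divides|G|}, the $f$-comparison from Lemma~\ref{lem:fS geq fH}, and defect-zero characters, then finish with finite checks against \cite{Conway} — is the right one, and several of your reductions (e.g.\ both groups sporadic; bounding $|H|<f(S)^2$) do appear in the paper. But there are genuine gaps that prevent the finite checks from actually being manageable.

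First, in the case $S$ sporadic and $H$ of Lie type in characteristic $p$, the bound $|H|<f(S)^2$ alone is far too weak (since $\sqrt{|S|}<f(S)<|S|$ this only gives $|H|<|S|^2$, i.e.\ $|H|/|S|<|S|$, leaving up to $|S|$ candidate orders), and your fallback observation about $p\ge 5$ is not used to close the argument. The paper's key step here is that if $S$ has a $p$-defect-zero character $\chi$ then $|S|_{p'}=|H|_{p'}$ \emph{and} $\chi(1)=|S|_p$ is a prime power, so the Malle--Zalesskii classification of prime-power-degree representations of quasi-simple groups immediately pins $(S,p,\chi(1))$ to a handful of triples; and if $S$ has no $p$-defect-zero character then Granville--Ono forces $p\in\{2,3\}$ and $S$ in a list of ten sporadic groups, after which the explicit $b(H)$ bounds of Lemma~\ref{lem:bounds-for-b(S)} yield $f(H)>f(S)$. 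Without this structure the ``finite computation'' you describe is not obviously finite in any usable sense. Second, in your Case~2 the appeal to Propositions~\ref{prop:mixed case I}, \ref{prop:mixed case II}, \ref{prop:alternating} and Theorem~\ref{thm:Lie type} is a non-sequitur: those results treat pairs where neither group is sporadic, so they contribute nothing when $H$ is sporadic. The paper instead uses two sharp observations you are missing: for $S=\Al_m$ and $H$ sporadic, $m!/2\mid|H|$ forces $m\le\overline{p}_H-1$ (the smallest prime not dividing $|H|$), a very strong bound; and for $S$ of Lie type and $H$ sporadic, the chain $f(S)\le|S|_{p'}<|S|_p^2\le|H|_p^2$ combined with $f(S)\ge f(H)>\sqrt{|H|/2}$ forces $p=2$, after which only the sixteen sporadic groups with a $2$-defect-zero character need be considered. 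Finally, a small slip: you assert that ``$f(H)>\sqrt{|H|}$ fails only for small $H$,'' but it in fact never fails for a nonabelian simple group, since $b(H)^2\le|H|-1$.
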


\begin{proof}
The case where both $S$ and $H$ are sporadic simple groups can be
verified by using the available data in \cite{Conway}.

Suppose that $S$ is a sporadic group and $H=\Al_m$ for some $m\geq
5$. Let $p_S$ be the largest prime divisor of $|S|$. By Theorem
\ref{thm-|S|divides|G|}, we have $|S|$ divides $|\Al_m|$, so
$p_S\leq m$. By Lemma \ref{lem:fS geq fH}, we have $f(S)\geq
f(\Al_m)>\sqrt{m!/2}$. It follows that $f(S)\geq \sqrt{p_S!/2}$.
Again using \cite{Conway}, it can be checked that this can never
happen.

Next we assume that $S$ is a sporadic group and $H$ is a simple
group of Lie type in characteristic $p$. Suppose first that $S$ has
an irreducible character, say $\chi$, of $p$-defect zero. Then, as
argued in the proof of Lemma \ref{lem-chi-p-power}, we have
$|S|_{p'}=|H|_{p'}$ and $\chi(1)=|S|_{p}$. In particular, $\chi(1)$
is a prime power, and therefore, \cite[Thm. 1.1]{Malle-Zalesskii}
yields
\[
(S,p,\chi(1))\in\{(M_{11}/M_{12}, 11,11), (M_{11},2,16),
(M_{24}/Co_{2}/Co_{3},23,23)\}
\]
(We note that $M_{12}$ has another irreducible character of prime
power degree, namely $16$, but the character is not of $2$-defect
zero and thus does not fit our situation.) However, for each of
these possibilities, there is no simple group of Lie type $H$ in
characteristic $p$ such that $|H|_{p'}=|S|_{p'}$. Next, we suppose
that $S$ has no characters of $p$-defect zero. By
\cite[Cor.~2]{Granville-Ono}, $$p\in\{2,3\} \text{ and }
S\in\{M_{12}, M_{22}, M_{24}, J_2, HS, Suz, Ru, Co_1, Co_3,BM\}.$$
Now we just apply Lemma~\ref{lem:bounds-for-b(S)} and argue
similarly as in the proof of Proposition~\ref{prop:mixed case I},
with $S$ in place of $\Al_m$, to arrive at $f(H)>f(S)$, and thus it
follows from Lemma \ref{lem:fS geq fH} that
$\cod(S)\nsubseteq\cod(H)$.

Now we consider the case where $S=\Al_m$ for some $m\geq 5$ and $H$
a sporadic simple group. Using Theorem \ref{thm-|S|divides|G|}, we
have $m!/2$ divides $|H|$ and so $m$ is at most $\overline{p}_H-1$,
where $\overline{p}_H$ is the smallest prime not dividing $|H|$.
This constraint is enough to ensure that $f(A_m)<f(H)$, and thus
$\cod(\Al_m)\nsubseteq \cod(H)$ by Lemma \ref{lem:fS geq fH}.

Finally we consider the case where $S$ is a simple group of Lie type
and $H$ a sporadic simple group. As in the proof of
Proposition~\ref{prop:mixed case II} , we have $f(H)\leq (|H|_p)^2$,
where $p$ is the defining characteristic of $S$. The only possible
$p$ satisfying such condition is $p=2$. Now $|S|_{2'}$ is an odd
codegree of $S$, and hence of $H$, and so $|S|_{2'}=|H|/\chi(1)$ for
some $2$-defect zero character $\chi\in\irr(H)$. There are in fact
only $16$ sporadic simple groups having a $2$-defect zero
irreducible character. For such a group and such a character, there
are no $S$ satisfying the indicated condition. This concludes the
proof.
\end{proof}

Theorem \ref{thm:main4} follows from Theorem \ref{thm:Lie type} and
Propositions \ref{prop:mixed case I}, \ref{prop:mixed case II},
\ref{prop:alternating}, and \ref{prop:sporadic}.

For future work on the codegree isomorphism conjecture
(\ref{eq:HCC}), we record the following immediate consequence of
Theorem \ref{thm:main4}.

\begin{thm}\label{thm:last}
Let $S$ be a finite nonabelian simple group. Let $G$ be a minimal
counterexample to (\ref{eq:HCC}) with respect to $S$ -- that is, $G$
is minimal subject to the conditions $\cod(G)=\cod(S)$ and $G \ncong
S$. Then $G$ has a unique minimal normal subgroup $N$ and $G/N\cong
S$.
\end{thm}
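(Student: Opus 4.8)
The plan is to read off the structure of $G$ from the three facts at our disposal: $\cod(G)=\cod(S)$, the minimality of $|G|$ among counterexamples, and Theorem~\ref{thm:main4}. First I would show that $G$ is perfect with a simple quotient isomorphic to $S$. Since $S$ is nonabelian simple it is perfect, so by Theorem~\ref{thm-char} it has no prime codegree; as $\cod(G)=\cod(S)$ and $G\neq 1$ (because $\cod(1)=\{1\}\neq\cod(S)$), the same theorem shows $G$ is perfect. Now fix a maximal normal subgroup $M\trianglelefteq G$. Then $G/M$ is simple, and being a quotient of a perfect group it is nonabelian simple; moreover $\cod(G/M)\subseteq\cod(G)=\cod(S)$ by Lemma~\ref{lem:1}(ii). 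Theorem~\ref{thm:main4} then gives $G/M\cong S$. Note also that $M\neq 1$, for otherwise $G\cong S$.

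Next I would prove that $M$ is a minimal normal subgroup. Suppose not, and pick $N\trianglelefteq G$ with $1\neq N<M$. Then $G/N$ is perfect with $|G/N|<|G|$, and $M/N$ is a proper nontrivial normal subgroup of $G/N$ with $(G/N)/(M/N)\cong G/M\cong S$. Applying Lemma~\ref{lem:1}(ii) twice gives
\[
\cod(S)=\cod\bigl((G/N)/(M/N)\bigr)\subseteq\cod(G/N)\subseteq\cod(G)=\cod(S),
\]
so $\cod(G/N)=\cod(S)$, while $G/N$ is not simple and hence not isomorphic to $S$. Thus $G/N$ is a counterexample to~(\ref{eq:HCC}) with respect to $S$ of order strictly smaller than $|G|$, contradicting minimality. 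So $M$ is a minimal normal subgroup of $G$, and since $G/M\cong S$ it is a minimal normal subgroup with the required quotient; it remains to see it is the only one.

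For uniqueness, let $N'$ be a minimal normal subgroup with $N'\neq M$. Since $N'\cap M\trianglelefteq G$ lies in the minimal normal subgroup $M$, and $N'\not\supseteq M$ (otherwise $N'=M$ by minimality of $N'$), we get $N'\cap M=1$; hence $N'$ centralizes $M$ and $N'M=N'\times M$. As $N'M/M\cong N'\neq 1$ is normal in the simple group $G/M$, we must have $N'M=G$, so $G=N'\times M$ with $N'\cong G/M\cong S$. Then $M$, being a direct factor of the perfect group $G$, is perfect, and being a minimal normal subgroup it is characteristically simple, so $M\cong T^{k}$ for some nonabelian simple group $T$ and some $k\geq 1$. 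Factoring out all but one simple factor of $M$ produces a quotient of $G$ isomorphic to $S\times T$, whence $\cod(S\times T)\subseteq\cod(G)=\cod(S)$. But choosing a prime $p\mid|T|$ and a nonprincipal $\psi\in\irr(T)$ of $p'$-degree (available because $\sum_{\psi\in\irr(T)}\psi(1)^{2}=|T|$), which is faithful since $T$ is simple, and pairing it with a suitable $\chi\in\irr(S)$, one obtains an irreducible character of $S\times T$ whose codegree has $p$-part $|S|_p|T|_p>|S|_p$ when $p\mid|S|$, and which is divisible by $p$ while $p\notin\pi(\cod(S))$ (Lemma~\ref{lem:1}(iv)) when $p\nmid|S|$. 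Since every codegree of $S$ divides $|S|$ and so is divisible by a $p$-power at most $|S|_p$, this contradicts $\cod(S\times T)\subseteq\cod(S)$. Therefore $N'=M$, and $M$ is the unique minimal normal subgroup of $G$, with $G/M\cong S$.

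I expect the conceptual crux to be the second step: the observation that any proper quotient $G/N$ of $G$ that still surjects onto $S$ automatically satisfies $\cod(G/N)=\cod(S)$, being squeezed between $\cod(S)$ and $\cod(G)$, so minimality forces $M$ to be minimal normal. The identification $G/M\cong S$ is handed to us by Theorem~\ref{thm:main4}, and the only genuinely computational ingredient — showing $\cod(S\times T)\not\subseteq\cod(S)$ for $T$ nonabelian simple — should be routine, although it requires the small case split according to whether $p$ divides $|S|$.
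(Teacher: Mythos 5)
Your proof is correct and follows essentially the same route as the paper: take a maximal normal subgroup $M$, apply Theorem~\ref{thm:main4} to $G/M$ to get $G/M\cong S$, use minimality of $G$ to force $M$ to be minimal normal, and rule out a second minimal normal subgroup via the resulting direct product decomposition. The paper disposes of the uniqueness step in one line (``otherwise $G\cong S\times S$, which violates $\cod(G)=\cod(S)$''), while you carry out the codegree computation explicitly for a quotient $S\times T$; the only unstated point in your version is that the complementary product $T_2\cdots T_k$ of simple factors of $M$ is indeed normal in $G$ (which holds because $N'$ centralizes $M$, and in fact forces $k=1$), but this is a trivial gap and your argument is sound.
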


\begin{proof}
Let $N$ be a maximal normal subgroup of $G$. Since
$\cod(G/N)\subseteq\cod(G)=\cod(S)$ and $G/N$ is simple, it follows
from Theorem \ref{thm:main4} that $G/N\cong S$. Furthermore, by the
minimality of $G$ as a counterexample, we have that $N$ is a minimal
normal subgroup of $G$. (If $G$ has a normal subgroup $M$ such that
$M<N$, then $\cod(G/N)\subseteq \cod(G/M)\subseteq \cod(G)$, forcing
$\cod(G/M)=\cod(S)$.) Also, $N$ is the unique minimal normal
subgroup of $G$ since, otherwise, $G=S\times S$, which violates the
assumption $\cod(G)=\cod(S)$.
\end{proof}

We conclude the paper with a couple of remarks. First, the group
pseudo-algebra $C(G)$ seems to better distinguish finite groups than
the usual complex group algebra $\mathbb{C}G$. For instance, while
any two abelian groups $A$ and $B$ of the same order have the same
complex group algebra $\mathbb{C}A=\mathbb{C}B$, it was shown in
\cite{mor23} that $A\cong B$ if and only if $C(A)=C(B)$. It has even
been speculated that a finite group $G$ and an abelian group $A$ are
isomorphic if and only if $C(G)= C(A)$. This, if true, would
indicate that abelian groups have very distinctive character
codegrees (counting multiplicities). Theorem \ref{thm:main1} shows
that simple groups indeed have very distinctive codegrees.

Our results are likely to remain true for quasi and/or almost simple
groups. However, at the time of this writing, we do not see yet a
uniform proof for these larger families of groups as the one
presented in this paper.


\end{document}